\newcommand{\C}{\mathbb C}
\renewcommand{\P}{\mathbb{P}}
\newcommand{\vo}{\mathbf{0}}
\newcommand{\nn}{\mathbf{n}}
\newcommand{\mm}{\mathbf{m}}
\newcommand{\pp}{\mathbf{p}}
\newcommand{\qq}{\mathbf{q}}
\newcommand{\hyp}{\mathbb{H}^2_{\mathbb{C}}}
\newcommand{\bpm}{\begin{pmatrix}}
\newcommand{\epm}{\end{pmatrix}}
\newcommand{\sh}{{\sinh\,}}
\newtheorem{thm}{Theorem}[section]
\newtheorem{cor}[thm]{Corollary}
\newtheorem{prop}[thm]{Proposition}
\newtheorem{lem}[thm]{Lemma}
\theoremstyle{definition}
\newtheorem{defn}[thm]{Definition}
\newtheorem{rem}[thm]{Remark}
\title{Tubes in  Complex Hyperbolic Manifolds}
\author[Ara Basmajian]{Ara Basmajian}
\thanks{AB  supported by PSC CUNY Award 65245-00 53
and partially supported by  Simons  Collaboration Grant (359956, A.B.)}
\address[Ara Basmajian]{The Graduate Center, CUNY, 365 Fifth Ave., N.Y., N.Y., 10016, USA,
and Hunter College, CUNY, 695 Park Ave., N.Y., N.Y., 10065, USA}
\email{abasmajian@gc.cuny.edu}
\author[Youngju Kim]{Youngju Kim}
\thanks{YK  was supported by a  National Research Foundation of Korea(NRF) grant funded by the Korea government(MSIT) (No.  NRF-2021R1F1A1045633).
}
\address[Youngju Kim]{Konkuk University, Department of Mathematics Education, Seoul, 05029, Republic of Korea}
\email{geometer2@konkuk.ac.kr}
\keywords{collar lemma,  complex geodesic surface, complex hyperbolic manifold, tubular neighborhood theorem}
\subjclass[2020]{Primary 53C55, 22E40; Secondary 30F40}
\begin{document}
\bibliographystyle{h-elsevier2}

\begin{abstract}
We prove a tubular neighborhood theorem for an embedded complex  geodesic surface in a complex hyperbolic 2-manifold where
the width of the tube depends only on   the Euler characteristic of the embedded surface.  We give an  explicit estimate for this width. We supply two applications of the tubular neighborhood theorem, the first is a  lower  volume bound for  such manifolds.   The second is an upper bound on the
first eigenvalue of the Laplacian in terms of the geometry of the manifold.
Finally, we prove a geometric combination theorem for two Fuchsian subgroups of ${\rm PU}(2,1)$. Using this combination theorem, we show that the optimal width size of a tube about an embedded  complex geodesic surface is asymptotically bounded between $\frac{1}{|\chi |}$ and
$\frac{1}{\sqrt{|\chi |}}$.
\end{abstract}

\maketitle

\section{Introduction and statement of results}\label{sec: intro}

The celebrated collar lemma guarantees that a simple closed geodesic
on a hyperbolic surface has a collar (tubular neighborhood) whose width  only depends on its length \cite{Buser, Buser-book, Chavel-Feldman, Keen}. In fact relaxing the curvature condition to pinched negative
also yields similar theorems in dimension two \cite{Buser, Chavel-Feldman}.  The main point  is  that the width of the collar  only depends on  the local geometry of the geodesic. Namely,  width is a function of the  geodesic  length   and not  the ambient geometry of the surface. Another   generalization of the collar  lemma, called the tubular neighborhood theorem \cite{B-tubular},  is to embedded totally geodesic hypersurfaces (real codimension one) in hyperbolic $n$-manifolds.
In such a setting length is replaced by
the  $(n-1)$-dimensional volume of the hypersurface and the tubular width function depends only on this $(n-1)$-volume.
Such a universal statement fails to hold if we leave (real) codimension one.
For example,  while the Margulis lemma insures  that  a sufficiently short geodesic has a  tubular neighborhood   whose radius is a function of the length \cite{Brooks-M},
there is no version of the collar lemma that holds  for long  simple closed geodesics in hyperbolic 3-manifolds (see section  \ref{sec: examples} for examples of such manifolds).

 A complex hyperbolic $2$-manifold is the  quotient of complex hyperbolic
$2$-space, $\hyp$, by a torsion-free discrete subgroup $\Gamma$ of ${\rm PU}(2,1)$. If moreover, $\Gamma$ stabilizes a complex line $L$ in
$\hyp$, then we say that $(\Gamma, L)$  is $\mathbb{C}$-Fuchsian, and the quotient
$L/\Gamma$ is a {\it complex geodesic surface}. The complex geodesic surface is totally geodesic of constant curvature -1.
  In this paper,  we prove  a   tubular neighborhood theorem for  embedded  complex geodesic surfaces in a complex hyperbolic $2$-manifold. While such an embedded  surface has real codimension two,
the  ambient complex geometry exhibits enough of the features of  real geodesics in a real hyperbolic surface to allow us to construct a tubular neighborhood.

Throughout this work we use the notation $S \hookrightarrow X$
to denote an {\it embedded complex geodesic surface  $S$ in  a complex hyperbolic 2-manifold $X$}. We also assume without further mention   that any two connected lifts of $S$ to the universal cover $\hyp$ do not  meet at infinity. Equivalently $S$  is not asymptotic to itself; for otherwise, there is no  tubular neighborhood of any width.

\medskip\noindent\textbf{Theorem A.}[Theorems \ref{thm:tubular} and \ref{thm:2-embedded}]
\textit{There exists a positive function $c(x)$ so that
  any finite area   surface $S \hookrightarrow X$  has a tubular neighborhood  of width at least $c(|\chi |)$  where
  $\chi$  is the Euler characteristic of $S$. The function  can be taken to be
\begin{equation}\label{eqn:width-bound-thm}
c(|\chi |) =\frac{1}{4}\log\left( \frac{1}{\pi |\chi |}+1 \right).
\end{equation}
Moreover,  two disjoint complex  geodesic  surfaces  $S_1$ and $S_2$ have disjoint tubular neighborhoods of width
$
\frac{1}{8}
 \log \left(\frac{1}{\pi{\rm max}\{|\chi_1  |,|\chi_2 | \}}+1\right),
$
where $\chi_k$ is the Euler characteristic of $S_{k}$, $k=1,2$.}
\vskip10pt

\noindent\textbf{Remark.}
We note that there is a
shortest geodesic from $S$ to itself which can not be freely homotoped into $S$. Such a geodesic meets $S$ perpendicularly at its start and end points and is usually referred to as an {\it orthogeodesic}. The content of Theorem A is that this shortest geodesic has a lower length bound that only depends on the Euler characteristic of $S$ and not on the manifold $X$.  If  $S$ is a closed surface  the   assumption  that it is not asymptotic to itself is automatic.

We use the notation  ${\rho_{X}(\cdot,\cdot)}$ to denote complex hyperbolic distance on $X$.
\vskip10pt

\noindent\textbf{Corollary B.}[Corollary \ref{cor:main}]
\textit{
Suppose that $X$ is a complex hyperbolic $2$-manifold and
 $S_1, S_2, \dots, S_n$ are pairwise disjoint embedded complex geodesic surfaces in $X$ satisfying ${\rho_{X}}(S_k, S_j)>0$, for $k \neq j$.
Then \begin{equation}
{\rm vol}(X) \geq  {4\pi^{2}}n | \chi |\Biggl[ \cosh^4 \bigg( {\frac{1}{16}\log\left(\frac{1}{\pi |\chi |} +1 \right)}\bigg) - 1 \Biggr]
\end{equation}
where $| \chi | = {\rm max}\left\{ |\chi (S_{1}) |,  \dots ,| \chi (S_n) | \right\}$.
}
\vskip10pt

Denote  the tubular neighborhood of an embedded surface $S$
by $\mathcal{N}_{c(|\chi|)}$. As an application of the tubular neighborhood theorem we obtain a bound on the first eigenvalue $\lambda (X)$ of the Laplacian in terms of the volume of the manifold $X$, the tubular neighborhood width $c(|\chi |)$ of the surface
$S$,  and the volume of the tubular neighborhood
$\mathcal{N}_{c(|\chi|)}$.
\vskip10pt

\noindent\textbf{Theorem C.}[Theorem \ref{thm: eigenvalue upper bound}]
\textit{
Suppose we have an embedded  compact complex geodesic  surface
$S \hookrightarrow X$, where $X$ is a closed  complex hyperbolic 2-manifold. Then
\begin{equation}
\lambda (X) \leq \frac{{\rm vol}(\mathcal{N}_{c(|\chi |)})}{(c(|\chi |))^{2}[{\rm vol}(X)-{\rm vol}(\mathcal{N}_{c(|\chi | )})]}.
\end{equation}
}
\vskip10pt

Using Theorem C, we are able to give an explicit upper bound  for
$\lambda (X)$ in terms of the volume of $X$ and the Euler characteristic of $S$. See Corollary  \ref{cor: explicit estimate}.

We next give a complex hyperbolic version of the combination theorem.
It is the analogue  of   Theorem 1.4 in  \cite{B-tubular}.
Setting  $s(x) = 2{\rm arcsinh}\left( \frac{1}{{\rm sinh}\frac{x}{2}} \right)$, we have

\vskip10pt
\noindent\textbf{Theorem D.}[Theorem \ref{thm: combination theorem}]
\textit{Let $(\Gamma_{1}, L_{1})$ and $(\Gamma_{2}, L_{2})$ be
$\mathbb{C}$-Fuchsian groups where $L_1$ and $L_2$ are disjoint, and
let $p_1 \in L_1$ and $p_2 \in L_2$ be the endpoints of the unique orthogeodesic from $L_1$ to $L_2$.
Suppose
\begin{equation}
s({\rm inj}(p_1))+s({\rm inj}(p_2)) < \rho (L_1,L_2).
\end{equation}
Then  $\Gamma =<\Gamma_1,\Gamma_2>$  is a discrete subgroup of
${\rm PU}(2,1)$ where
\begin{enumerate}
\item  $\Gamma$  is abstractly  the free
product $\Gamma_1 \ast \Gamma_2$
\item $L_{1}/\Gamma_{1}$ and $L_{2}/\Gamma_{2}$ are embedded complex geodesic surfaces in $X=\mathbb{H}_{\mathbb{C}}^2 /\Gamma$
\item  $\rho_{X} (S_{1},S_{2})=\rho (L_1,L_2).$
\end{enumerate}}
\vskip10pt
In the above statement we have used $inj (p_i)$ to denote the {\it injectivity radius}  of the projection of  $p_{i}$ in the surface  $L_{i}/\Gamma_{i}$, for $i=1,2.$

\vskip10pt

Finally, we use Theorem D to give bounds on the optimal tube function,
\begin{equation*}
d_{\chi}=\sup \{\epsilon : \text{if } S \hookrightarrow X
\text{~and~ $\chi(S)=\chi$, then $S$ has a tube of width } \epsilon   \}.
\end{equation*}

\vskip10pt

\noindent\textbf{Corollary E.}[Corollary \ref{cor: optimal tube function}]
\textit{The optimal tube  function $d_{\chi}$ satisfies
\begin{equation}
\frac{1}{4}\log\left( \frac{1}{\pi |\chi |}+1 \right)\leq d_{\chi} \leq
s\left({\rm arccosh}\left(\cot \frac{\pi/2}{|\chi | +2}\right)\right).
\end{equation}
In particular,
\begin{equation}
\frac{1}{4 \pi |\chi |} \lesssim d_{\chi} \lesssim
\frac{2\sqrt{\pi}}{\sqrt{|\chi |}}, \text{ as } |\chi| \rightarrow \infty.
\end{equation}
}
Here  the notation $f(x) \lesssim g(x)$ means there exists a constant $C >0$  so that $f(x) \leq C g(x)$, for $x$ large.

\subsection{A little history} Besides the references mentioned earlier in this section for codimension one, in \cite{B-Br-Ha-Me} it is shown that in a pinched negatively  curved  manifold,  an embedded  totally geodesic
hypersurface has a tubular neighborhood whose width is explicitly given as a function of the pinching constants, dimension, and the $(n-1)$-volume of the hypersurface.  Other  collar lemma type theorems  and related works  include
 \cite{B-pants, B-Bulletin, B-stable, Bas-Miner, Bey-Poz, Cao-Parker,
 Dryden-Parlier, Gallo, Gilman, Go-Ka-Le,  Kim, Kojima, Lee-Zhang, Markham-Parker, Parker-volume, Parlier, Randol}.
 We emphasize that the collar lemmas about geodesics  in higher dimensions   only hold for short geodesics.

\subsection{Outline of proof}

First we give an outline of the ideas involved in showing the existence of a  tubular neighborhood function.
Suppose $S$ is an embedded  complex geodesic surface in the  complex hyperbolic $2$-manifold $X=\mathbb{H}_{\mathbb{C}}^2 /\Gamma$.  Let $\delta$ be  the shortest orthogeodesic from $S$ to itself, and let $d >0$ be its length.  Hence $S$ has a tubular neighborhood of width
$\frac{d}{2}$. We proceed to find a lower bound on $d$ which only depends on the Euler characteristic of $S$.

The first step is to consider a  lift of $\delta$ to
$\mathbb{H}_{\mathbb{C}}^{2}$ (we continue to call it $\delta$).
This lift $\delta$ is the common orthogonal  between  two lifts  of $S$,
 say $L_1$ to $L_2$,  where $L_1$ and $L_2$ are complex lines.  Next we orthogonally project $L_2$ to  the disc $D$ in $L_1$. A  crucial fact is that the size of this projection only depends on  $d$,  the length of
 $\delta$.  Now  the $\mathbb{C}$-Fuchsian subgroup
 ${\rm Stab}_\Gamma(L_{1})$ moves  $D$ around in $L_1$.  Due to the presence of holonomy in ${\rm Stab}_\Gamma(L_{1})$  it is possible that there are translates of $D$ that intersect $D$.
   Next  we construct an embedded wedge
 $W \subset X$ anchored on   $D$ of width $\frac{d}{2}$ (see Figure   \ref{fig:wedge}). The wedge $W$ is determined by three parameters:
 the size (radius)  of its anchor, its width $\frac{d}{2}$, and its sector angle.
 We next prove a technical lemma we call the holonomy  angle lemma (Lemma \ref{lem:holonomy}) which  guarantees that the sector angle is lower bounded by a function of $d$. Hence the wedge $W$
 is a function of $d$ allowing  us to show that it is embedded in $X$.
 In fact, $W  \subset N_{\frac{d}{2}}(S)$, where
$N_{\frac{d}{2}}(S)$ denotes the  $\frac{d}{2}$-neighborhood of $S$.
 Finally comparing  the volume of $W$
with the volume of $N_{\frac{d}{2}}(S)$ and noting that    as $d$ gets smaller the volume inequality, ${\rm vol}(W)\leq {\rm vol}(N_{\frac{d}{2}}(S))$,  does not hold guarantees that there is a critical lower bound on $d$. That will be our function $c(|\chi |)$,  where $|\chi |$ is the absolute value of the Euler characteristic of $S$.
\vskip10pt

\subsection{Section plan and notation}
Theorem A  is a  consolidation of Theorems \ref{thm:tubular} and  \ref{thm:2-embedded}, and Corollary B is Corollary \ref{cor:main} where
we have used the Euler characteristic in place of area for a  totally geodesic complex surface.
Sections \ref{sec: cx hyp geom prelim}-\ref{sec:vol-wedge}
 cover general facts about complex lines and volumes.
Sections \ref{sec: tubes} and \ref{sec: Two embedded surfaces} have the proofs of the tubular neighborhood theorem and its corollary.
In section \ref{sec: eigenvalue},  we give as a consequence of the tubular neighborhood theorem a   bound on the first non-zero eigenvalue of the Laplacian in terms of the geometry.
In section \ref{sec: examples} we supply examples to show that the tubular neighborhood theorem does not hold for geodesics in a hyperbolic
$3$-manifold.  Also, in  section \ref{sec: examples} we supply references  to complex hyperbolic  manifolds  containing  complex  geodesic surfaces.   In section \ref{combination thm and tube bounds} we prove a geometric version of a combination theorem and use it  to give bounds on the optimal tube function. Throughout this paper we use the words, collar, tube, and tubular neighborhood interchangeably.

Table \ref{Table:notation}
is a guide to the location of  the important  formulas and commonly used notations  in the paper.  For basic  references we refer to \cite{Buser-book, Goldman, Parker}.

\begin{table}
\begin{tabular}{| l | c | c |}
\hline
{\bf Definition} & {\bf Notation} &{\bf Section} \\
 \hline
 area of a disc in a complex line& ${\rm Area}(D_r)$ & Equation (\ref{eqn:area-disc}) \\
\hline
collar function & $c(A)$ & Equation (\ref{eqn:width-bound})  \\
\hline
complex hyperbolic space & $\hyp$ & Equation(\ref{def:hyperbolic-space}) \\
 \hline
 complex line stabilizer in ${\rm PU}(2,1)$& ${\rm Stab}(L)$ & Equation (\ref{def:stab-1})  \\
\hline
complex line stabilizer in   $\Gamma$ & ${\rm Stab}_\Gamma(L)$ & Equation (\ref{def:stab-2}) \\
\hline
 embedded complex geodesic surface $S$ in X &$S \hookrightarrow X$
 &Section \ref{sec: intro}    \\
 \hline
  holonomy & hol & Subsection \ref{sec:holonomy} \\
\hline
hyperbolic distance &  $\rho(\pp, \qq)$ & Equation (\ref{def:hyp-metric}) \\
\hline
injectivity radius &${\rm inj}(\cdot)$
& Subsection \ref{subsec: comb thm}\\
\hline
 $\cosh^2{ \left( \frac{\rho(L_1, L_2)}{2} \right)} $ & $N(L_{1},L_{2})$ & Proposition \ref{thm:polar-formula} \\
\hline
 optimal tube  function &$d_{\chi}$  & Section \ref{sec: intro}  \\
\hline
 orthogonal projection & $\Pi_L(M)$  & Definition \ref{def:ortho-proj} \\
\hline
polar vector with  respect to a complex line & &Definition \ref{def:polar} \\
\hline
 radius of projection & $s(d)$ &Equation (\ref{formula:radius-projection}) \\
\hline
 tubular neighborhood& $\mathcal{N}(S)$ & Theorem \ref{thm:tubular}  \\
\hline
volume form & {\rm dvol} & Section \ref{sec:vol-wedge} \\
\hline
 wedge & $W(s, \epsilon, \psi)$ & Definition \ref{def:wedge} \\
\hline
\end{tabular}
\vskip10pt
\caption{Notation}
\label{Table:notation}
\end{table}

\subsection*{Acknowledgement.}
 This work began   during several
visits to the   Korea Institute for Advanced Study in Seoul, South Korea.
The authors would like to thank the institute for their support.
The authors would also like to thank Elisha Falbel, Julien Paupert, and Mahmoud Zeinalian  for  helpful conversations.


\section{Complex hyperbolic geometry preliminaries}
\label{sec: cx hyp geom prelim}

In this section we set notation and discuss  some of the basics on complex hyperbolic geometry which we will be using later.
 As a basic reference we refer to  the  book \cite{Goldman},  and the notes \cite{Parker}.   In particular, the Riemannian metric on complex hyperbolic $2$-space is normalized as in the  references  \cite{Goldman, Parker}  to have sectional curvature,  $-1 \leq K \leq -1/4$. The extreme curvatures are realized by totally geodesic planes. Complex lines (to be defined below) having curvature $-1$ and totally real planes having   curvature
 $-1/4$.

Throughout this paper we will use the ball model of the complex hyperbolic 2-space.
That is, we consider complex $3$-space $\C^{2,1}$  with the first Hermitian form
$$J = \bpm 1&0&0 \\ 0&1&0 \\ 0&0&-1\epm$$
and hence
\begin{equation}\label{def:H-inner-product}
 \left< \mathbf{p}, \mathbf{q} \right> = \mathbf{q}^*J\mathbf{p}=p_1\overline{q_1}+p_2\overline{q_2} - p_3\overline{q_3}.
\end{equation}
We usually denote  vectors  in $\C^{2,1}$ with bold face letters.
%
%
%

Let $\mathbb{P}: \mathbb{C}^{2,1} \setminus \{0\} \rightarrow \mathbb{CP}^2$ be the canonical projection onto complex projective space.
On the chart of $\mathbb{C}^{2,1}$ with $p_3 \neq 0$, the projection map $\P$ is given by
\begin{equation}
(p_1, p_2, p_3) \mapsto \left( \frac{p_1}{p_3},  \frac{p_2}{p_3}\right).
\end{equation}
For any $p=(p_1, p_2) \in \mathbb{C}^2$,
 we lift the point $p$  to $\mathbf{p} = ( p_1, p_2, 1) \in \mathbb{C}^{2,1}$, called the \textit{standard lift of} p.
Then $\left< \mathbf{p},\mathbf{p} \right> = |p_1|^2 + |p_2|^2 -1$.
Thus, the ball model of complex hyperbolic $2$-space is
\begin{equation}\label{def:hyperbolic-space}
\mathbb{H}^2_{\mathbb{C}}  =  \{ (p_1, p_2)\in\mathbb{C}^2 \mid |p_1|^2 + |p_2|^2 < 1 \}
\end{equation}
and its boundary at infinity is
\begin{equation}
\partial\mathbb{H}^2_{\mathbb{C}} = \{ (p_1, p_2)\in\mathbb{C}^2 \mid |p_1|^2 + |p_2|^2 = 1 \}.
\end{equation}

The \textit{Bergman metric} $\rho$ on $\mathbb{H}^2_{\mathbb{C}}$ is defined as
\begin{equation}\label{def:hyp-metric}
 \cosh^2\left( \frac{{\rho}(p,q)}{2} \right)
  = \frac{\left< \mathbf{p}, \mathbf{q} \right> \left< \mathbf{q}, \mathbf{p} \right>}{\left< \mathbf{p}, \mathbf{p} \right>\left< \mathbf{q}, \mathbf{q} \right>}
\end{equation}
where $\mathbf{p}$ and $\mathbf{q}$ are the lifts of $p$ and $q \in \hyp$ respectively.
Let ${\rm SU}(2,1)$ be the group of unitary matrices which preserve the given Hermitian form with determinant $1$.
Then the group of holomorphic isometries of $\mathbb{H}^2_{\mathbb{C}}$ is
 ${\rm PU}(2,1) = {\rm SU}(2,1) / \{ I, {\omega}I, {\omega}^2I\}$,
 where $\omega = (-1+i\sqrt{3})/2$ is a cube root of unity.

\subsection{Complex lines and orthogonal projection}

 \begin{defn}\label{def:polar}
 Suppose $L \subseteq \C^{2,1}$  is a complex  two-dimensional subspace of $\C^{2,1}$.  A non-zero vector $\nn \in \C^{2,1}$ is said to be a {\it polar vector} for  $L$
if it is orthogonal to $L$ with respect to the Hermitian form  $J$.
\end{defn}

Given  a basis $\{\pp,\qq\}$ of $L$ where $\pp = (p_1,p_2, p_3), \qq=(q_1, q_2,q_3) \in \C^{2,1}$, a polar vector for $L$ can be taken to be the cross product $\nn$ of  $\pp$ and $\qq$,
\begin{equation}\label{polar}
 \nn=\left(~ \overline{p_2q_3}-\overline{p_3q_2},~ \overline{p_3q_1}-\overline{p_1q_3},~ \overline{p_2q_1}- \overline{p_1q_2} ~\right).
\end{equation}
A complex 2-dimensional subspace  $L$ of $\C^{2,1}$ projects to a {\it complex line}  of $\hyp$ if and only if the polar vector $\nn$ is a positive vector.
In the ball model, a complex line is precisely the (non-trivial) intersection of a complex one-dimensional affine  subspace of  $\C^2$ with $\hyp$.

\begin{prop}[\cite{Goldman}]\label{thm:polar-formula}
Let $L$ and $M$ be complex lines with polar vectors $\nn, \mm$ respectively, and set
\begin{equation*}
N(L, M) = \frac{\left< \nn, \mm\right>\left< \mm, \nn\right>}{\left< \nn, \nn\right>\left< \mm, \mm\right> }.
\end{equation*}
\begin{enumerate}
\item  $N(L, M)>1$, in which case $L$ and $M$ are ultraparallel and
\begin{equation*} \cosh^2{ \left( \frac{\rho(L, M)}{2} \right)} = N(L, M). \end{equation*}
\item  $N(L, M) =1 $, in which case $L$ and $M$ are asymptotic or coincide.
\item $N(L, M)<1$, in which case $L$ and $M$ intersect and
\begin{equation*} \cos^2\theta = N(L, M) \end{equation*}
 where $\theta$ is the angle of intersection between $L$ and $M$.
\end{enumerate}
\end{prop}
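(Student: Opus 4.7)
The plan has two components: first, establish the trichotomy among the three cases via a signature argument; second, derive the quantitative distance and angle formulas by reducing to normal forms.

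For the qualitative classification, the trivial case $L = M$ gives $N(L,M) = 1$ directly. Otherwise, letting $\widetilde{L}, \widetilde{M}$ denote the corresponding 2-dimensional complex subspaces of $\mathbb{C}^{2,1}$, the intersection $\widetilde{L} \cap \widetilde{M}$ is 1-dimensional, spanned by some nonzero $\mathbf{v}$. Direct inspection in the ball model shows that $L$ and $M$ meet inside $\hyp$, are asymptotic, or are ultraparallel according as $\langle \mathbf{v}, \mathbf{v}\rangle$ is negative, zero, or positive. Since $\mathbf{n}$ and $\mathbf{m}$ are both $J$-orthogonal to $\mathbf{v}$ and linearly independent, $\mathrm{span}(\mathbf{n}, \mathbf{m})$ is precisely the $J$-orthogonal complement of $\mathbb{C}\mathbf{v}$. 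With $J$ of signature $(2,1)$ and $\mathbf{n}, \mathbf{m}$ positive, the restriction of $J$ to this span is positive definite, positive semi-definite of rank 1, or of signature $(1,1)$ in the three cases. The Gram determinant is
\begin{equation*}
\langle \mathbf{n}, \mathbf{n}\rangle\langle \mathbf{m}, \mathbf{m}\rangle - |\langle \mathbf{n}, \mathbf{m}\rangle|^2 = \langle \mathbf{n}, \mathbf{n}\rangle\langle \mathbf{m}, \mathbf{m}\rangle\bigl(1 - N(L,M)\bigr),
\end{equation*}
and since $\langle \mathbf{n}, \mathbf{n}\rangle, \langle \mathbf{m}, \mathbf{m}\rangle > 0$, its sign distinguishes the three signatures, yielding $N(L,M) < 1$, $= 1$, $> 1$ exactly in the three cases.

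For the quantitative formulas I would exploit two facts: $N(L,M)$ is manifestly invariant under $\mathrm{PU}(2,1)$ (both numerator and denominator transform identically under a common unitary transformation of the polar vectors), and $\mathrm{PU}(2,1)$ acts transitively on pairs of intersecting complex lines with prescribed angle and on pairs of ultraparallel complex lines with prescribed distance. In case (3), I would conjugate so that $L$ and $M$ both pass through the origin, take explicit polar vectors such as $\mathbf{n} = (0,1,0)$ and $\mathbf{m} = (\sin\theta, \cos\theta, 0)$, and compute directly to obtain $N(L,M) = \cos^2\theta$. In case (1), I would normalize $L$ to have polar vector $(0,1,0)$ and take $M$ to be the image of $L$ under the hyperbolic translation of length $d$ along the $z_2$-axis; the polar vector of $M$ then works out to $(0, \cosh(d/2), \sinh(d/2))$, and a short application of the definition \eqref{def:hyp-metric} confirms both that $\rho(L, M) = d$ and that $N(L,M) = \cosh^2(d/2)$.

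The main obstacle I expect is the bookkeeping underlying the transitivity claims, particularly in the ultraparallel case: this requires observing that a pair of ultraparallel complex lines is determined up to $\mathrm{PU}(2,1)$ by its common perpendicular geodesic, which in turn uses that the perpendicularity condition pins down each complex line uniquely at its endpoint. The intersecting case reduces cleanly to the $\mathrm{U}(2)$-action on pairs of complex 1-dimensional subspaces of $\mathbb{C}^2$, where the Hermitian angle is a complete invariant. Once these transitivity facts are in hand, the explicit polar-vector computations above complete the proof.
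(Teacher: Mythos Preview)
The paper does not supply a proof of this proposition: it is quoted from Goldman's book \cite{Goldman} and stated without argument, so there is no ``paper's own proof'' to compare against. Your outline is therefore not a reconstruction but an independent proof, and as such it is essentially the standard one found in Goldman.

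Your argument is sound. The signature/Gram-determinant step cleanly handles the trichotomy, and the reduction to normal forms via $\mathrm{PU}(2,1)$-invariance is the right way to get the quantitative formulas. Two small points worth tightening. First, in case (3) you compute $N(L,M)=\cos^2\theta$ for your chosen polar vectors, but you should say explicitly why the Riemannian angle between the two complex lines at their intersection point equals the parameter $\theta$ in your normal form; this is where the definition of the Hermitian angle on $T_0\hyp\cong\mathbb{C}^2$ enters. Second, in case (1) you should check (as you indicate) not only that $\rho(\mathbf{0},(0,\tanh(d/2)))=d$ but that this is actually the minimum distance between the two lines; this follows because the orthogonal projection $\Pi_L$ is the nearest-point map and $\Pi_L(M)$ is the single point $\mathbf{0}$ only along the perpendicular direction---or, more simply, because the common perpendicular realizes the distance. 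The transitivity facts you flag as the main obstacle are indeed the substantive input, and they are exactly what Goldman establishes; citing them is appropriate here.
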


The subspace of complex lines inherits a topology from
$\mathbb{C}^{2,1}$, namely  a sequence of complex lines converges to a complex line if and only if the complex lines have unit polar vectors that converge in the topology of
$\mathbb{C}^{2,1}$. Though complex lines have real codimension greater  than one, the following proposition  says   that transversality of complex lines is a stable property.

\begin{prop}\label{prop:transversality}
Suppose two complex lines $L$ and $M$ intersect transversally.
Then any complex line near  $M$ also intersects  $L$.
\end{prop}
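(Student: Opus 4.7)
The plan is to reduce the proposition to a continuity statement about the invariant $N$ appearing in Proposition~\ref{thm:polar-formula}. I would begin by fixing a polar vector $\mathbf{n}$ for $L$ and a unit polar vector $\mathbf{m}$ for $M$. By the definition of the topology on the space of complex lines (convergence of unit polar vectors in $\mathbb{C}^{2,1}$), any sequence of complex lines $M_k \to M$ comes from a sequence of unit polar vectors $\mathbf{m}_k \to \mathbf{m}$. The key observation is that
\begin{equation*}
N(L, M') = \frac{\langle \mathbf{n}, \mathbf{m}'\rangle \langle \mathbf{m}', \mathbf{n}\rangle}{\langle \mathbf{n}, \mathbf{n}\rangle \langle \mathbf{m}', \mathbf{m}'\rangle}
\end{equation*}
is a continuous function of $\mathbf{m}'$ in a neighborhood of $\mathbf{m}$, because the Hermitian form is continuous and positivity of $\langle \mathbf{m}', \mathbf{m}' \rangle$ (required for $\mathbf{m}'$ to be the polar vector of a complex line in $\hyp$) is an open condition.

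Next, I would extract the strict inequality from the transversality hypothesis. Two distinct complex lines meeting at a point in $\hyp$ necessarily meet at a positive angle $\theta \in (0, \pi/2]$, since a complex line is determined by a point together with a complex tangent direction. By Proposition~\ref{thm:polar-formula}(3), this is equivalent to $N(L,M) = \cos^2 \theta < 1$. Combining this strict inequality with the continuity established in the previous paragraph, there is an open neighborhood of $\mathbf{m}$ in the space of unit polar vectors on which $N(L, \cdot) < 1$ continues to hold. Any complex line $M'$ whose unit polar vector lies in this neighborhood therefore satisfies $N(L, M') < 1$, and a second application of Proposition~\ref{thm:polar-formula}(3) yields that $M'$ intersects $L$ in $\hyp$.

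The only point requiring mild care is bookkeeping the representative polar vector: polar vectors are defined only up to a nonzero complex scalar, so one must choose continuous (for example unit) representatives in order to quote the continuity of $N$ unambiguously. I do not anticipate any genuine obstacle beyond this — the proposition is essentially the statement that the strict open condition $N(L,\cdot) < 1$ is preserved under small perturbations of the polar vector, exactly as in the classical fact that transverse intersection of submanifolds is stable under $C^1$-small perturbations.
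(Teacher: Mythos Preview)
Your proposal is correct and follows essentially the same approach as the paper: both arguments reduce to the observation that $N(L,\cdot)<1$ is an open condition on polar vectors, then invoke Proposition~\ref{thm:polar-formula}(3). The only difference is presentational---the paper normalizes so that $L$ has polar vector $(1,0,0)$ and verifies the inequality $|q_1|<1$ explicitly via the triangle inequality, whereas you appeal to continuity of $N$ in the abstract; neither version contains an idea the other lacks.
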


\begin{proof}
Without loss of generality, we may assume that
$L = \{ (0, w) \in \hyp \}$ with a unit polar vector $\nn = (1, 0, 0)$, and $\mm = ( m_1, m_2, m_3)$ the unit polar vectors of $M$.
Since $L$ and $M$ intersect each other transversally, $N(L, M) = |m_1|^2 <1$.

Let $0<\epsilon<1 - |m_1|$ be given.
If  $\qq = ( q_1, q_2, q_3)$ is a unit polar vector satisfying
\begin{equation}\label{eq: polar vectors near}
\sqrt{ |q_1 - m_1|^2 + |q_2 -m_2|^2 + |q_3 - m_3|^2 } <
\epsilon,
\end{equation}
then
\begin{equation}\label{eq:bounds}
|q_1| \leq |q_1 -m_1| + |m_1| <\epsilon + |m_1| < 1.
\end{equation}
Thus the complex line $Q$ whose polar vector is $\qq$ intersects $L$ because
\begin{equation*}
N(Q,L) = \left|\left< (q_1, q_2, q_3), (1, 0, 0) \right> \right|^2 = |q_1|^2 < 1.
\end{equation*}
\end{proof}


\begin{defn}\label{def:ortho-proj}
Given a complex line $L$ with a polar vector $\nn$, we define the \textit{orthogonal projection} $\Pi_L(p) : \hyp \rightarrow L$ to be
\begin{equation}
 \Pi_L(p) = \mathbb{P}\left( \pp - \frac{\left< \pp,\nn \right>}{\left< \nn,\nn\right> }\nn \right)
\end{equation}
for $p \in \hyp$.
\end{defn}

This projection is the nearest point projection map \cite{Goldman}.
For a complex line $L$ and  a subgroup $\Gamma < {\rm PU}(2,1)$,
        we denote the stabilizer of $L$ in ${\rm PU}(2,1)$  and in $\Gamma$ respectively, as
\begin{align}
  &{\rm Stab}(L) = \{ \gamma \in {\rm PU}(2,1) \mid \gamma(L) = L \}, \label{def:stab-1}\\
  &{\rm Stab}_\Gamma(L) = \{ \gamma \in \Gamma \mid \gamma(L) = L \}. \label{def:stab-2}
\end{align}

Here, we recall some geometric properties of the projection which we use later. For completeness  we supply   proofs.

\begin{prop}\label{prop:commute}
Let $L$ be a complex line and $\gamma \in {\rm Stab}(L)$. Then
\begin{equation}
 \Pi_L{\circ}\gamma(p) = \gamma{\circ}\Pi_L(p)
\end{equation}
for $p \in \hyp$.
\end{prop}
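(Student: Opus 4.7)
The plan is to work directly from the explicit formula
$$\Pi_L(p) = \mathbb{P}\!\left(\pp - \tfrac{\langle \pp,\nn\rangle}{\langle \nn,\nn\rangle}\nn\right)$$
by chasing the action of $\gamma$ on a polar vector $\nn$ of $L$. The geometric intuition is immediate: $\Pi_L$ is the nearest point projection and $\gamma$ is an isometry preserving $L$, so the nearest point to $\gamma p$ on $L=\gamma(L)$ must be $\gamma$ applied to the nearest point to $p$. But since the statement is phrased in terms of the algebraic formula, the cleanest argument is to verify it at the level of lifts in $\mathbb{C}^{2,1}$.

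First I would observe that because $\gamma$ preserves both $L$ and the Hermitian form $J$, it also preserves the $J$-orthogonal complement of (the complex $2$-plane over) $L$, which is the complex line spanned by $\nn$. Hence $\gamma\nn = \lambda\nn$ for some $\lambda \in \mathbb{C}^{*}$. Applying the invariance $\langle \gamma\nn,\gamma\nn\rangle = \langle \nn,\nn\rangle$ gives $|\lambda|^{2}\langle\nn,\nn\rangle=\langle\nn,\nn\rangle$, and since $\nn$ is a positive vector, $|\lambda|=1$.

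Next I would compute the inner product appearing in $\Pi_L(\gamma p)$. Using $\nn = \lambda^{-1}\gamma\nn$ and the $J$-invariance of $\gamma$,
\begin{equation*}
\langle \gamma\pp,\nn\rangle
= \langle \gamma\pp,\lambda^{-1}\gamma\nn\rangle
= \overline{\lambda^{-1}}\,\langle \gamma\pp,\gamma\nn\rangle
= \overline{\lambda^{-1}}\,\langle \pp,\nn\rangle
= \lambda\,\langle \pp,\nn\rangle,
\end{equation*}
the last equality by $|\lambda|=1$. Substituting into Definition~\ref{def:ortho-proj},
\begin{equation*}
\pp_{\gamma} := \gamma\pp - \tfrac{\langle\gamma\pp,\nn\rangle}{\langle\nn,\nn\rangle}\nn
= \gamma\pp - \tfrac{\langle\pp,\nn\rangle}{\langle\nn,\nn\rangle}\,\lambda\nn
= \gamma\pp - \tfrac{\langle\pp,\nn\rangle}{\langle\nn,\nn\rangle}\,\gamma\nn
= \gamma\!\left(\pp - \tfrac{\langle\pp,\nn\rangle}{\langle\nn,\nn\rangle}\nn\right).
\end{equation*}
Applying $\mathbb{P}$ and using that $\mathbb{P}\circ\gamma = \gamma\circ\mathbb{P}$ on representatives (since $\gamma$ is linear) then gives $\Pi_L(\gamma p) = \gamma\,\Pi_L(p)$, as required.

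There is no real obstacle here; the only subtlety to be careful about is the scalar ambiguity coming from passing between $\mathbb{C}^{2,1}$ and $\mathbb{CP}^{2}$, and from the fact that $\gamma\nn$ is only a polar vector of $L$ up to scale. The argument above handles both at once, and the crucial arithmetic input is the identity $\overline{\lambda^{-1}}=\lambda$ forced by $|\lambda|=1$, which is where the isometry hypothesis actually gets used.
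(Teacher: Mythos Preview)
Your proof is correct. The paper, however, takes a different and much shorter route: it invokes the fact (stated just before the definition) that $\Pi_L$ is the nearest point projection, and then simply observes that an isometry $\gamma$ fixing $L$ carries the perpendicular geodesic from $p$ to $L$ to the perpendicular geodesic from $\gamma(p)$ to $L$.

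The contrast is between a geometric one-liner and a direct algebraic verification. The paper's argument is shorter and conceptually transparent, but it leans on the identification of the Hermitian formula with the metric nearest-point map, which is quoted from \cite{Goldman} rather than proved. Your approach is self-contained: it works purely from Definition~\ref{def:ortho-proj} and the $J$-invariance of $\gamma$, so it would stand even without that identification. The key step you isolate---that $\gamma\nn=\lambda\nn$ with $|\lambda|=1$, forcing $\overline{\lambda^{-1}}=\lambda$---is exactly what makes the algebra close up. One could streamline it slightly by using $\gamma\nn$ itself as the polar vector when computing $\Pi_L(\gamma p)$ (the formula is manifestly independent of the choice of polar vector up to scale), which avoids tracking $\lambda$ altogether; but what you wrote is already clean.
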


\begin{proof}
This follows from the fact that the geodesic from $p$ to $L$ which intersects  $L$ orthogonally is moved
by $\gamma$ to the geodesic from $\gamma (p)$ to $L$ which intersects $L$ orthogonally.
\end{proof}


\begin{prop}\label{prop:s}
If two disjoint complex lines $L_1$ and $L_2$ are a distance $d>0$ a part, then
the projection $\Pi_{L_1}(L_2)$ of $L_2$ into $L_1$ is a disc of radius $s(d) = 2\log{\coth{\frac{d}{4}}}$. Moreover, the function $s(d)$ is an involution.
\end{prop}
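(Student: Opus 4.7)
Proof plan for Proposition \ref{prop:s}:

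The strategy is to normalize by an isometry of ${\rm PU}(2,1)$ so that the two complex lines sit in a very explicit standard position, compute the projection directly from the formula in Definition \ref{def:ortho-proj}, and then convert the resulting Euclidean disc to its hyperbolic radius using the Poincar\'e disc model on $L_1$.

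First, since the holomorphic isometry group acts transitively on configurations of two ultraparallel complex lines with a prescribed distance, I will place $L_1=\{(0,w)\in\hyp\}$ with polar vector $\nn_1=(1,0,0)$, and arrange the common perpendicular from $L_1$ to $L_2$ to run along the positive real axis of the first coordinate, passing through the origin of $L_1$. Then $L_2$ has the form $\{(z_0,w):|z_0|^2+|w|^2<1\}$ with $z_0=\tanh(d/2)>0$; to justify the value of $z_0$, I will compute the polar vector $\nn_2$ of $L_2$ from the cross-product formula (\ref{polar}) using the spanning vectors $(z_0,0,1)$ and $(0,1,0)$, evaluate $N(L_1,L_2)=\frac{1}{1-|z_0|^2}$, and invoke Proposition \ref{thm:polar-formula} which forces $\cosh^2(d/2)=1/(1-|z_0|^2)$.

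Next, I apply the projection formula to a standard lift $\pp=(z_0,w,1)$ of a point of $L_2$. The inner products $\langle\pp,\nn_1\rangle=z_0$ and $\langle\nn_1,\nn_1\rangle=1$ give
\begin{equation*}
\pp-\frac{\langle\pp,\nn_1\rangle}{\langle\nn_1,\nn_1\rangle}\nn_1=(0,w,1),
\end{equation*}
so $\Pi_{L_1}(z_0,w)=(0,w)$. Hence the image $\Pi_{L_1}(L_2)$ is exactly the Euclidean disc $\{(0,w):|w|^2<1-|z_0|^2\}$ of Euclidean radius $\sech(d/2)$, centered at the foot of the common perpendicular. (That this image is genuinely a \emph{disc} centered at the foot also follows conceptually from the rotational symmetry provided by ${\rm Stab}(L_1)\cap{\rm Stab}(L_2)$ together with Proposition \ref{prop:commute}, but in the normalized coordinates it is already visible.) Since the Bergman metric restricts to $L_1$ as the standard Poincar\'e metric of curvature $-1$, the Euclidean radius $r_E=\sech(d/2)$ converts to hyperbolic radius
\begin{equation*}
\rho=\log\frac{1+\sech(d/2)}{1-\sech(d/2)}=\log\frac{\cosh(d/2)+1}{\cosh(d/2)-1}=\log\frac{2\cosh^2(d/4)}{2\sinh^2(d/4)}=2\log\coth(d/4),
\end{equation*}
which is precisely $s(d)$.

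For the involution statement, I will verify $s(s(d))=d$ by a one-line algebraic check: setting $y=s(d)$ gives $e^{y/2}=\coth(d/4)=\frac{\cosh(d/4)}{\sinh(d/4)}$, whence
\begin{equation*}
\coth(y/4)=\frac{e^{y/2}+1}{e^{y/2}-1}=\frac{\cosh(d/4)+\sinh(d/4)}{\cosh(d/4)-\sinh(d/4)}=\frac{e^{d/4}}{e^{-d/4}}=e^{d/2},
\end{equation*}
so $s(y)=2\log\coth(y/4)=d$. There is no genuine obstacle here; the only mildly delicate point is keeping the Hermitian bookkeeping straight (especially the curvature-$(-1)$ normalization of the induced metric on a complex line, which is what makes the arithmetic in the radius conversion work out cleanly).
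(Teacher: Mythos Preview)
Your proof is correct and follows essentially the same approach as the paper: normalize the pair $(L_1,L_2)$ to a standard position, compute the projection in coordinates, and convert the Euclidean radius of the image disc to a hyperbolic radius on $L_1$. The only cosmetic difference is that the paper parametrizes $L_2$ as the image $f_x(L_1)$ of $L_1$ under an explicit loxodromic $f_x\in{\rm Stab}(M)$ (so points of $L_2$ are written as $(x,w\sqrt{1-x^2})$ with $|w|<1$), whereas you write $L_2$ directly as the affine slice $\{(z_0,w)\}$ and verify $z_0=\tanh(d/2)$ via $N(L_1,L_2)$; the projection computation and radius conversion are then identical. You also supply the one-line verification of the involution $s(s(d))=d$, which the paper leaves implicit.
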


\begin{proof}
Without loss of generality, we may assume that $L_1 = \{ (0,w) \in \C^2 ~\mid~ |w|<1 \}$,
$M=\{ (z,0) \in \C^2 ~\mid~ |z| <1 \}$,
$L_1 \cap M = \{ \vo=(0,0) \}$, and the distance $d$ from $L_1$ to $L_2$ is realized by the geodesic segment connecting $\vo$ and $(x,0)$ in $M$ for $x = {\rm tanh}\frac{d}{2}$.
i.e, $M \cap L_2 = \{(x,0)\}$ (see Figure \ref{fig:setting}).
Let $f_x \in {\rm Stab}(M)$ be a loxodromic isometry mapping $\vo$ to $(x, 0)$,
\begin{equation}\label{def:nor-hyp-M}
f_x =\bpm \frac{1}{\sqrt{1-x^2}} & 0 & \frac{x}{\sqrt{1-x^2}} \\
     0 & 1 & 0 \\
      \frac{x}{\sqrt{1-x^2}} & 0 &   \frac{1}{\sqrt{1-x^2}} \epm.
\end{equation}
Applying Proposition \ref{prop:commute}, $$L_2 = \Pi_M^{-1}(x,0) = f_x{\circ}\Pi_M^{-1}(\vo) = f_x(L_1).$$
For  $(0, w) \in L_1$,
\begin{equation}
f_x(0,w,1) = \bpm \frac{1}{\sqrt{1-x^2}} & 0 & \frac{x}{\sqrt{1-x^2}} \\
                 0 & 1 & 0 \\
                \frac{x}{\sqrt{1-x^2}} & 0 &   \frac{1}{\sqrt{1-x^2}} \epm
            \bpm 0\\w\\1\epm
         = \bpm x\\w\sqrt{1-x^2}\\1\epm.
\end{equation}
Thus  we have,
$$L_2=\Bigg\{ (x, w\sqrt{1-x^2}) \in \hyp \mid |w|<1 \Bigg\}.$$

Therefore, the projection $\Pi_{L_1}(L_2)$ is given in coordinates,
\begin{equation}
 {\Pi}_{L_1}(x, w\sqrt{1-x^2})  =  (0, w\sqrt{1-x^2}).
\end{equation}
and it is a disc centered at $\vo$ with radius $s(d)$,
\begin{equation}\label{formula:radius-projection}
\begin{split}
s(d) & = \log\left( \frac{1+ \sqrt{1-x^2}}{1-\sqrt{1-x^2}} \right)
       = 2\log{\coth{\frac{d}{4}}}  
    = 2{\rm arcsinh}\left( \frac{1}{{\rm sinh}\frac{d}{2}} \right).\\
\end{split}
\end{equation}
\end{proof}

\begin{prop}
If two complex lines $L$ and $M$ intersect at a point $p$ with angle $\theta$,
then the projection $\Pi_{M}(L)$ of $L$ to $M$  is a disc  centered at $p$ with radius $\log\frac{1+|\cos\theta|}{1-|\cos\theta|}$.
\end{prop}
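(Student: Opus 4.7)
The plan is to mimic Proposition~\ref{prop:s}: normalize $M$ and $p$ by an isometry, pin down a polar vector of $L$ from the intersection angle, parametrize $L$ explicitly, apply the projection formula from Definition~\ref{def:ortho-proj}, and convert the resulting Euclidean radius in the ball to hyperbolic distance.

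First I would normalize so that $M=\{(0,w)\in\C^2 : |w|<1\}$ has unit polar vector $\mm=(1,0,0)$ and $p=\vo$. Since $L$ passes through $p$, the lift $(0,0,1)$ must be Hermitian-orthogonal to any polar vector $\nn$ of $L$, which forces $n_3=0$. Taking $\nn$ to be a unit vector, the angle condition $\cos^2\theta = N(L,M)$ from Proposition~\ref{thm:polar-formula} reads $|n_1|^2=\cos^2\theta$, so I may write $\nn=(n_1,n_2,0)$ with $|n_1|=|\cos\theta|$ and $|n_2|=|\sin\theta|$ (and $\sin\theta\neq 0$ because $L\neq M$).

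Next I would parametrize $L$. A standard lift $(z_1,z_2,1)$ lies in $L$ iff $\bar{n}_1 z_1 + \bar{n}_2 z_2 = 0$; solving $z_2=-(\bar{n}_1/\bar{n}_2)z_1$ and substituting into $|z_1|^2+|z_2|^2<1$ shows $L$ consists of the points $(t,-(\bar{n}_1/\bar{n}_2)t)$ with $|t|<|\sin\theta|$. Plugging into Definition~\ref{def:ortho-proj} with $\mm=(1,0,0)$ and $\langle\mm,\mm\rangle=1$ yields
$$\Pi_M\bigl(t,-(\bar{n}_1/\bar{n}_2)t\bigr)=\bigl(0,-(\bar{n}_1/\bar{n}_2)t\bigr),$$
whose $w$-coordinate has Euclidean modulus $(|\cos\theta|/|\sin\theta|)|t|$ sweeping $[0,|\cos\theta|)$ as $|t|$ ranges over $[0,|\sin\theta|)$. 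Hence $\Pi_M(L)$ is precisely the Euclidean disc of radius $|\cos\theta|$ centered at $p$ in $M$.

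To finish, I would convert the Euclidean radius $r=|\cos\theta|$ into a hyperbolic radius. Applying the Bergman metric formula (\ref{def:hyp-metric}) between $(0,0)$ and $(0,r)$ gives $\cosh^2(\rho/2)=1/(1-r^2)$, whence $\tanh(\rho/2)=r$ and $\rho=\log\frac{1+r}{1-r}=\log\frac{1+|\cos\theta|}{1-|\cos\theta|}$, as claimed. No step presents a serious obstacle; the calculation is simply the transversal-intersection analogue of Proposition~\ref{prop:s}, and the only subtlety is ensuring that $\sin\theta\neq 0$ (so that the projection is well-defined as a proper disc rather than degenerating to $M$ itself).
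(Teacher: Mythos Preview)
Your proof is correct and follows essentially the same approach as the paper: normalize $M$ and $p$, determine the polar vector of $L$ from the angle condition, parametrize $L$ explicitly, apply the projection formula, and convert the Euclidean radius $|\cos\theta|$ to hyperbolic distance. The only differences are cosmetic: the paper takes $M=\{(z,0)\}$ with $\mm=(0,1,0)$ and normalizes the polar of $L$ as $(a,1,0)$, whereas you take $M=\{(0,w)\}$ with $\mm=(1,0,0)$ and use a unit polar vector for $L$.
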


\begin{proof}
Without loss of generality, let $p =(0,0)$ and $M = \{ (z, 0) \mid |z| < 1 \}$ with a polar vector $\mm = (0,1,0)$.
If a complex line $L$ passes through $p$, then its polar vector is of the form $(a,b,0)\in \C^{2,1}$ for $b \neq 0$ and thus we  may normalize its polar vector  to be $\nn=(a, 1,0)$ for $a \in \C$.
Since any $q \in L$ 
satisfies  $\langle \qq, \nn \rangle = 0$ and $\langle \qq, \qq \rangle <0$,
$$L = \left\{ (z, -z\overline{a}) \in \hyp \mid z = re^{i\psi}, r < \sqrt{\frac{1}{1+|a|^2}}, 0\leq \psi \leq 2\pi \right\}.$$
For $ q = (z, -z\overline{a}) \in L$,  $\langle \qq, \mm \rangle = -z\overline{a}$ and
\begin{align*}
 \Pi_{M}(\qq)= \mathbb{P}\left( \qq - \frac{\left< \qq, \mm \right>}{\left< \mm, \mm \right> }\mm \right)
   &= (z,0,1).
\end{align*}
Thus, the boundary of the projection is
\begin{equation}
\partial\Pi_{M}(L) = \left\{ \left( \sqrt{\frac{1}{1+|a|^2}}e^{i\psi}, 0 \right) \in \hyp  \mid 0\leq \psi \leq 2\pi\right\}.
\end{equation}
Since \begin{equation}
\cos^2\theta = N(\nn, \mm) = \frac{1}{1+|a|^2},
\end{equation}
the projection $\Pi_{M}(L)$ is a disc  centered at $p$ with radius $\log\frac{1+|\cos\theta|}{1-|\cos\theta|}$.
\end{proof}

\subsection{Normalizations}\label{sec:nomrmalization}


\begin{figure}[htb]\label{fig:horizontal}
 \begin{center}
     \includegraphics[width=5.5cm]{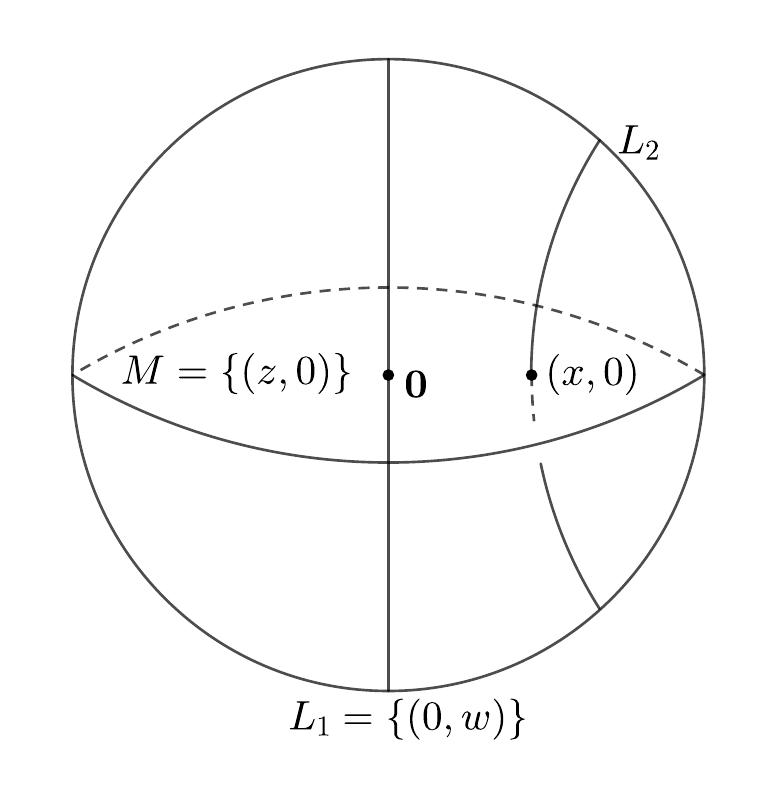}
    \caption{$d={\rho}(L_1, L_2) = {\rho}(\mathbf{0}, (x,0) ) $}
    \label{fig:setting}
  \end{center}
\end{figure}

Here, we perform some normalization of two disjoint complex lines  $L_1$ and $L_2$ with a distance $d>0$ a part which we will be using later (Figure \ref{fig:setting}).
They have a unique common perpendicular geodesic which is contained in a unique complex line, say $M$.
Since the isometry group ${\rm PU}(2,1)$ acts on the set of complex lines transitively,
we may assume that $L_1 = \{ (0,w) \in \C^2 \mid |w|<1 \}$.
Since the stabilizer ${\rm Stab}(L_1)$ also acts on $L_1$ transitively,
we may assume $L_1 \cap M = \{ \vo \}$ so that $M=\{ (z,0) \in \C^2 \mid |z| <1 \}$.
Finally, applying the rotation around $L_1$ if necessary, we may assume that the distance $d$ from $L_1$ to $L_2$ is realized by the geodesic segment connecting $\vo$ and $(x,0)$ in $M$ for $x = {\rm tanh}\frac{d}{2}$.
i.e, $M \cap L_2 = \{(x,0)\}$.
The polar vectors of $L_1$ and $L_2$ are
$\nn_1 = (1, 0, 0)$   and  $\nn_2=\left( \frac{1}{x},0,1 \right)$, respectively.
\begin{equation}\label{eqn:x-d}
\begin{split}
 d={\rho}(L_1, L_2) & = {\rho}(\mathbf{0}, (x,0) ) = \log{\frac{1+x}{1-x}}
 \Longleftrightarrow x = \tanh\frac{d}{2}
 \end{split}
\end{equation}
Note that $M = {\Pi}^{-1}_{L_1}(\vo)$.

\begin{defn}\label{def:normalied-loxo}
For  $(0, w) \in L_1$, $w=re^{i\theta}$,
consider a loxodromic isometry $g^\psi_w \in {\rm Stab}(L_1)$  which maps $(0,0)$ to $(0,w)$ and whose axis passes through $(0,0)$.
We call such $g^\psi_w$ \textit{a normalized loxodromic  isometry},
\begin{equation}\label{normalized-lox-hol}
g^\psi_{w}
  = \bpm e^{\frac{2\psi}{3}i}  & 0 & 0 \\
          0 & \frac{ e^{-\frac{\psi}{3}i} }{\sqrt{1-r^2}} & \frac{re^{i\theta}e^{-\frac{\psi}{3}i}}{\sqrt{1-r^2}} \\
          0 &  \frac{re^{-i\theta}e^{-\frac{\psi}{3}i}}{\sqrt{1-r^2}}& \frac{e^{-\frac{\psi}{3}i}}{\sqrt{1-r^2}} \\
    \epm \in {\rm SU}(2,1).
\end{equation}
This mapping has holonomy $\psi$ (see subsection \ref{sec:holonomy}. Holonomy).
\end{defn}

The normalized loxodromic  $g^\psi_w$ in the ball model is given in coordinates
\begin{equation}
\left(z_1,z_2 \right) \mapsto
\left(\frac{\sqrt{1-r^2}e^{i\psi} z_1}{re^{-i\theta}z_{2}+1},
\frac{z_2+re^{i\theta}}{re^{-i\theta}z_2 +1} \right).
\end{equation}

%


\begin{rem}\label{normalization:loxo}
Consider $\gamma \in {\rm Stab}(L_1)$ and let ${\gamma}(\vo)= (0, w) \in L_1$.
Choose the normalized loxodromic isometry $g=g_w^\psi \in {\rm Stab}(L_1)$
so that $g^{-1}{\circ}{\gamma}(\vo) = \vo$ and $hol(\gamma) =hol(g)$. 
Then $g^{-1}{\circ}\gamma|_{L_1}$ is a rotation around $\vo$ on $L_1$, say $R$.
Note that  $R|_M = id$ and $R(L_2) = L_2$.
Thus,
\begin{equation}
\rho(L_2, {\gamma}(L_2)) = \rho(L_2, g{\circ}R(L_2)) = \rho(L_2, g(L_2)).
\end{equation}
Consider the projection  $\Pi_{L_1}(L_2)$ which is a disc centered at $\vo \in L_1$.  
Then ${g}^{-1}{\circ}{\gamma}(\Pi_{L_1}(L_2)) = \Pi_{L_1}(L_2)$ and hence
\begin{equation}
\gamma(\Pi_{L_1}(L_2)) = g(\Pi_{L_1}(L_2)).
\end{equation}
Therefore, for any  $\gamma \in {\rm Stab}(L_1)$, there exists  a normalized loxodromic isometry $g=g_w^\psi \in {\rm Stab}(L_1)$ so that $\rho(L_2, {\gamma}(L_2)) = \rho(L_2, g(L_2))$  and $ \gamma(\Pi_{L_1}(L_2)) = g(\Pi_{L_1}(L_2))$.
We will also use this normalization later.
\end{rem}

\subsection{Holonomy}\label{sec:holonomy}

Let $L \subset \hyp$  be a complex line.  For any   $g \in {\rm Stab}(L)$, we define the  {\it holonomy of $g$},
  denoted $\text{hol}({g})$, to be the oriented angle of rotation about $L$.
The  {\it holonomy group}  associated to  $L$ is the set of   elements $g  \in {\rm Stab}(L)$ for which $g|_{L}= id$.
This is a circle group.
The holonomy operation  can be interpreted as a homomorphism,
$$\text{hol}:  {\rm Stab}(L) \rightarrow \mathbb{S}^{1}.$$

\begin{prop}[Properties of holonomy]\label{prop:holonomy}
Let  $f  \text{ and } g  \in {\rm Stab}(L)$. Then
\begin{enumerate}
\item $hol({g}) \in \mathbb{S}^{1}$.

\item For a normalized loxodromic  $g^\psi_z \in {\rm Stab}(L_1)$, $hol({g^\psi_z})=\psi$.

\item $hol({fg})=hol({gf})$.

\end{enumerate}
\end{prop}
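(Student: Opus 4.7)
The plan is to treat each of the three parts separately, drawing on the interpretation of $hol : {\rm Stab}(L) \to \mathbb{S}^1$ as a homomorphism stated just before the proposition. Part (1) is essentially a well-definedness statement. For any $g \in {\rm Stab}(L)$ and any $p \in L$, the derivative $dg_p$ preserves the splitting $T_p \mathbb{H}^2_\mathbb{C} = T_p L \oplus N_p L$, where $N_p L$ denotes the one-complex-dimensional normal line; since $g$ is a holomorphic isometry, the restriction of $dg_p$ to $N_p L$ is a unitary transformation of a complex line, hence multiplication by a unique $e^{i\psi} \in \mathbb{S}^1$. Independence of the basepoint $p \in L$ follows from connectedness of $L$ together with continuity of the derivative, so $hol(g) \in \mathbb{S}^1$ is well defined.

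For part (2), I would factor the normalized loxodromic as $g^\psi_w = R_\psi \cdot g^0_w$, where $R_\psi = {\rm diag}(e^{2\psi i/3}, e^{-\psi i/3}, e^{-\psi i/3})$ and $g^0_w$ denotes the pure translation obtained by setting $\psi = 0$ in equation (\ref{normalized-lox-hol}). A direct matrix multiplication verifies this factorization. The projective action of $R_\psi$ on $(z_1, z_2)$ reads $(z_1, z_2) \mapsto (e^{i\psi} z_1, z_2)$, which fixes $L_1 = \{z_1 = 0\}$ pointwise and rotates the normal direction by $\psi$, so $hol(R_\psi) = \psi$. The translation $g^0_w$ has trivial normal-rotational component, so $hol(g^0_w) = 0$. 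The homomorphism property of $hol$ then yields $hol(g^\psi_w) = \psi$.

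For part (3), I would note that $gf = g(fg)g^{-1}$, so $fg$ and $gf$ are conjugate in ${\rm Stab}(L)$. Since $hol$ is a homomorphism into the abelian group $\mathbb{S}^1$, it is conjugation-invariant, giving $hol(fg) = hol(gf)$ at once. The main conceptual obstacle lies in part (1), namely verifying that the normal-rotation angle is both independent of basepoint and actually defines a group homomorphism to $\mathbb{S}^1$; once that rigidity is secured, parts (2) and (3) follow by a direct matrix computation and a conjugation argument respectively.
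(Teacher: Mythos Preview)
The paper states Proposition~\ref{prop:holonomy} without proof, so there is nothing to compare against directly; what matters is whether your argument stands on its own.

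Parts (2) and (3) are fine once the homomorphism property is granted: the factorization $g^\psi_w = R_\psi \, g^0_w$ is correct, $R_\psi$ is central in ${\rm Stab}(L_1)$, and conjugation-invariance of any homomorphism into an abelian group gives (3) immediately.

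The gap is in part (1). Your argument that the normal-rotation angle is independent of $p$ reads: ``Independence of the basepoint $p \in L$ follows from connectedness of $L$ together with continuity of the derivative.'' This is a non-sequitur: a continuous map from a connected space to $\mathbb{S}^1$ need not be constant. More seriously, you have not said how you identify $N_pL$ with $N_{g(p)}L$ when $g(p)\neq p$, and the answer matters. In the ball-model coordinates of the paper, an element of ${\rm Stab}(L_1)$ acts as $(z_1,z_2)\mapsto\bigl(\tfrac{\lambda z_1}{cz_2+d},\,\tfrac{az_2+b}{cz_2+d}\bigr)$; the induced map on the $z_1$-fibre over $(0,z_2)$ is multiplication by $\lambda/(cz_2+d)$, whose argument is $\arg\lambda-\arg(cz_2+d)$ and genuinely depends on $z_2$. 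So the ``normal-rotation angle'' you describe is not basepoint-independent in the obvious trivialisation, and the same failure persists if one rescales by the fibre radius. The normal bundle to a complex line in $\hyp$ is not flat, so parallel transport does not give a canonical identification either.

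The clean route is algebraic rather than infinitesimal: realise ${\rm Stab}(L_1)$ as $P(U(1)\times U(1,1))$, observe that the holonomy subgroup $\{R_\psi\}$ is exactly the centre, and exhibit $hol$ as the quotient homomorphism by (the image of) $SU(1,1)$. The well-definedness and homomorphism property then come for free from group theory, without any basepoint discussion. Once you have that, your arguments for (2) and (3) go through unchanged.
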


\section{Holonomy and projection}\label{sec:holonomy-angle}

For this section,
let $L_1$ and $L_2$ be two disjoint complex lines in $\hyp$ which we assume are a distance $d >0$ apart.
Without loss of generality, we may assume the normalizations as in subsection \ref{sec:nomrmalization} (see Figure \ref{fig:setting}).

\begin{lem}[Trivial holonomy]\label{lem:general-no-hol}
For any $\gamma \in {\rm Stab}(L_1)$ with trivial holonomy,
\begin{equation}
 \gamma(\Pi_{L_1}(L_2)) \cap \Pi_{L_1}(L_2) = \emptyset \Longleftrightarrow  \gamma(L_2) \cap L_2 = \emptyset.
\end{equation}
\end{lem}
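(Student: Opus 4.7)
The biconditional splits into a trivial implication and a substantive one. The easy direction, that $\gamma(L_2)\cap L_2\neq\emptyset$ implies $\gamma(\Pi_{L_1}(L_2))\cap \Pi_{L_1}(L_2)\neq\emptyset$, is immediate from Proposition \ref{prop:commute}: since $\Pi_{L_1}\circ\gamma=\gamma\circ\Pi_{L_1}$, projecting any intersection point produces a point of intersection of the projected discs. Taking contrapositives handles one half of the biconditional.

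For the converse I first reduce $\gamma$ to a normalized loxodromic. By Remark \ref{normalization:loxo}, there is a normalized loxodromic $g=g_w^\psi$ with $g(\vo)=\gamma(\vo)=(0,w)$ and $\mathrm{hol}(g)=\mathrm{hol}(\gamma)$, and a rotation $R$ of $L_1$ around $\vo$ with $R|_M=\mathrm{id}$, so that $\gamma=g\circ R$. The trivial holonomy hypothesis forces $\psi=0$, so $g=g_w^0$; moreover $R(L_2)=L_2$ (stated in the remark) and $R(\Pi_{L_1}(L_2))=\Pi_{L_1}(L_2)$ (because $\Pi_{L_1}(L_2)$ is a disc centered at $\vo$ by Proposition \ref{prop:s}). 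Consequently $\gamma(L_2)=g(L_2)$ and $\gamma(\Pi_{L_1}(L_2))=g(\Pi_{L_1}(L_2))$, so it suffices to prove the equivalence when $\gamma=g=g_w^0$.

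In the normalized coordinates of subsection \ref{sec:nomrmalization}, both disjointness conditions reduce to scalar inequalities in $r=|w|$. On the projection side, $\Pi_{L_1}(L_2)$ and $g(\Pi_{L_1}(L_2))$ are two open discs of equal Bergman radius $s(d)$ in $L_1$ centered at $\vo$ and at $(0,w)$, so they are disjoint exactly when $\rho_{L_1}(\vo,(0,w))=\log\frac{1+r}{1-r}\geq 2s(d)=4\log\coth(d/4)$. On the complex-line side, I will compute $g\cdot\nn_2$ from the polar vector $\nn_2=(1/x,0,1)$ of $L_2$ by applying the explicit matrix of $g_w^0$, then appeal to Proposition \ref{thm:polar-formula}: since $g(L_2)=L_2$ occurs only when $r=0$, the lines $L_2$ and $g(L_2)$ are disjoint in $\hyp$ iff $N(L_2,g(L_2))\geq 1$.

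The heart of the proof is verifying that these two inequalities define the same half-line in $r$. Using $x=\tanh(d/2)$ and $y=\sqrt{1-x^2}$, I expect both to collapse to the single threshold $r\geq\frac{2y}{1+y^2}=\frac{2\sqrt{1-x^2}}{2-x^2}$, which yields the biconditional. This algebraic matching is, I anticipate, the main obstacle: the hyperbolic disc-overlap condition on $L_1$ and the polar-vector condition $N(L_2,g(L_2))\geq 1$ come from rather different computations, and their exact agreement is precisely what the trivial-holonomy hypothesis on $\gamma$ is buying.
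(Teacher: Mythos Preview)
Your proposal is correct and follows essentially the same path as the paper's proof: reduce via Remark \ref{normalization:loxo} to a normalized loxodromic $g=g_w^0$, handle the easy implication by projecting an intersection point, and for the converse compute $N(L_2,g(L_2))$ from the polar vector $g(\nn_2)$ and match the resulting inequality in $r$ against the disc-overlap condition on $L_1$. The ``main obstacle'' you anticipate is exactly the computation the paper carries out, and it lands precisely on your predicted threshold $r=\tfrac{2\sqrt{1-x^2}}{2-x^2}$, whose Bergman distance from $\vo$ is $2s(d)$.
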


\begin{proof}
Set $D =\Pi_{L_1}(L_2)$. Using Remark \ref{normalization:loxo},
it suffices to prove that
$g(D) \cap D   = \emptyset  \text{  if and only if  }  g(L_2) \cap L_2 =\emptyset$
for a normalized loxodromic isometry $g=g^0_w \in {\rm Stab}(L_1)$ (see Definition \ref{def:normalied-loxo}).
Clearly, $g(L_2) \cap L_2 \neq \emptyset$ implies $g(D) \cap D \neq  \emptyset$.
To prove the converse, set $w = re^{{\theta}i}$.
note that 
Let $\nn_3$ be the polar vector of $g(L_2)$,
 $\nn_3 = g(\nn_2)=\left( \frac{1}{x}, \frac{re^{{\theta}i}}{\sqrt{1-r^2}},  \frac{1}{\sqrt{1-r^2}}\right)$.
\begin{equation}
\begin{split}
& \left< \nn_3, \nn_3 \right> = \left< g(\nn_2), g(\nn_2) \right>  = \left< \nn_2, \nn_2 \right> =\frac{1}{x^2}-1  \\
& \left< \nn_2, \nn_3 \right> = \frac{1}{x^2} - \frac{1}{\sqrt{1-r^2}}
\end{split}
\end{equation}
Using Proposition \ref{thm:polar-formula}, $g(L_2) \cap L_2  \neq \emptyset$ if and only if
\begin{align}\label{eqn:distance-L2-L3-no-hol}
 &N(L_2, g(L_2))
    = \frac{\left[ \frac{1}{x^2} - \frac{1}{\sqrt{1-r^2}} \right]^2}{\left[ \frac{1}{x^2} -1 \right]^2}
     = \left[ \frac{(\sqrt{1-r^2} - x^2) }{\sqrt{1-r^2}(1-x^2)} \right]^2 < 1.
\end{align}
This implies that
    $ r < \frac{2\sqrt{1-x^2}}{2-x^2}$ and
\begin{align}
 {\rho}\left((0,0),\left( 0,\frac{2\sqrt{1-x^2}}{2-x^2}\right)\right)
  &=2\log\frac{1+\sqrt{1-x^2}}{1-\sqrt{1-x^2}} =2s(d).
\end{align}
Thus, $0 <  r < \frac{2\sqrt{1-x^2}}{2-x^2}$ if and only if $g(D) \cap (D) \neq \emptyset$.
\end{proof}

The following lemma is about the case with non-trivial holonomy.

\begin{lem}[Non-trivial holonomy]\label{lem:general-hol}
For any loxodromic ${\gamma}\in {\rm Stab}(L_1)$ with non-trivial holonomy $\psi$,
\begin{equation} {\gamma}(L_2) \cap L_2 = \emptyset. \end{equation}
\end{lem}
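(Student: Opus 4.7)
The plan is to mirror the strategy of Lemma~\ref{lem:general-no-hol}: reduce $\gamma$ to a normalized loxodromic via Remark~\ref{normalization:loxo}, compute the Hermitian cross-products of the polar vectors of $L_2$ and $\gamma(L_2)$, and then apply Proposition~\ref{thm:polar-formula} to conclude $N(L_2, \gamma(L_2)) \ge 1$, which says precisely that the two complex lines are disjoint (at worst asymptotic) inside $\hyp$.

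First, by Remark~\ref{normalization:loxo} I may replace $\gamma$ by a normalized loxodromic $g = g_w^\psi \in {\rm Stab}(L_1)$ with $w = re^{i\theta}$, $r \in (0,1)$, and holonomy $\psi \neq 0$; the remark in fact gives $\gamma(L_2) = g(L_2)$ as sets (since $g^{-1}\gamma$ has trivial holonomy and acts as a rotation around $M$ fixing $L_2$ setwise). Since $g \in {\rm SU}(2,1)$ preserves the Hermitian form, $g(\nn_2)$ is a polar vector for $g(L_2)$ with $\langle g(\nn_2), g(\nn_2)\rangle = \langle \nn_2, \nn_2\rangle = (1-x^2)/x^2$. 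Applying the matrix~(\ref{normalized-lox-hol}) to $\nn_2 = (1/x, 0, 1)$ and computing the cross term gives
\[
\langle \nn_2, g(\nn_2)\rangle \;=\; \frac{e^{-2\psi i/3}}{x^2} - \frac{e^{\psi i/3}}{\sqrt{1-r^2}}, \qquad \bigl|\langle \nn_2, g(\nn_2)\rangle\bigr|^2 \;=\; \frac{1}{x^4} + \frac{1}{1-r^2} - \frac{2\cos\psi}{x^2\sqrt{1-r^2}}.
\]

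By Proposition~\ref{thm:polar-formula}, the conclusion $g(L_2)\cap L_2 = \emptyset$ is therefore equivalent to $N(L_2, g(L_2)) \ge 1$, which after clearing denominators reduces to the algebraic inequality
\[
x^2 r^2 + 2(1-r^2) \;\ge\; 2\cos\psi\,\sqrt{1-r^2}.
\]
The non-trivial-holonomy hypothesis enters through $\cos\psi < 1$, which strictly reduces the right-hand side compared to the trivial-holonomy regime of Lemma~\ref{lem:general-no-hol} (where the right-hand side becomes $2\sqrt{1-r^2}$ and the inequality is exactly the threshold for $\gamma(D)\cap D = \emptyset$). I expect the main obstacle to be the careful estimate: one wants to leverage $1 - \cos\psi > 0$ together with the structure of $g$ being a genuine loxodromic in ${\rm Stab}(L_1)$ to ensure the bound holds across the admissible range of $(r, x, \psi)$. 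I would complete the verification by treating the inequality as quadratic in $\sqrt{1-r^2}$ and isolating the contribution of $(1-\cos\psi)$, which geometrically corresponds to the holonomy rotating the translate $g(L_2)$ away from $L_2$ around the axis lying inside $L_1$.
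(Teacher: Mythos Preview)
Your setup and computations exactly mirror the paper's: reduce to the normalized loxodromic via Remark~\ref{normalization:loxo}, compute $N(L_2, g(L_2))$ from the polar vectors, and try to show $N \ge 1$. Your reduction of $N\ge 1$ to the inequality
\[
x^2 r^2 + 2(1-r^2) \;\ge\; 2\cos\psi\,\sqrt{1-r^2}
\]
is correct and is equivalent to what the paper obtains. The gap is that this inequality---and hence the lemma as stated---does \emph{not} hold across the full parameter range. For instance, take $x=r=\tfrac12$ and $\cos\psi=0.99$: the left side equals $\tfrac{1}{16}+\tfrac{3}{2}=1.5625$, while the right side is $2(0.99)\sqrt{3}/2\approx 1.715$. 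Equivalently, for these values one computes $N(L_2,g(L_2))\approx 0.91<1$, so $L_2$ and $g(L_2)$ genuinely intersect. This is consistent with Proposition~\ref{prop:transversality}: if a trivial-holonomy element $g_0$ makes $g_0(L_2)$ meet $L_2$ transversally (which Lemma~\ref{lem:general-no-hol} explicitly allows), then perturbing to a small nonzero holonomy keeps the intersection. Your proposed strategy of treating the inequality as a quadratic in $u=\sqrt{1-r^2}$ makes this visible: the quadratic $(2-x^2)u^2-2u\cos\psi+x^2$ has its minimum value $\bigl(x^2(2-x^2)-\cos^2\psi\bigr)/(2-x^2)$, which is negative whenever $\cos^2\psi>x^2(2-x^2)$.

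The paper's own chain of inequalities has the same defect at the analogous step: it asserts $(x^2-\sqrt{1-r^2})^2>(x^2-1)^2$, which fails whenever $\sqrt{1-r^2}>x^2$ (then $|x^2-\sqrt{1-r^2}|=\sqrt{1-r^2}-x^2<1-x^2$). So neither your sketch nor the paper's argument can be completed without an additional hypothesis linking $d$, the translation length of $\gamma|_{L_1}$, and $\psi$. Fortunately this lemma is not invoked elsewhere in the paper; the holonomy-angle Lemma~\ref{lem:holonomy} that drives the main theorem relies only on Lemma~\ref{lem:general-no-hol}.
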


\begin{proof}
Using Remark \ref{normalization:loxo},
it suffices to prove that
$  g(L_2) \cap L_2 = \emptyset$
for a normalized loxodromic isometry $g = g^\psi_w \in {\rm stab}(L_1)$ (see Definition \ref{def:normalied-loxo}).
Set $w = re^{{\theta}i}$. Let $\nn_3$ be the polar vector of $g(L_2)$,
$\nn_3 = g(\nn_2)=\left( \frac{e^{\frac{2{\psi}i}{3}}}{x}, \frac{re^{{\theta}i}e^{-\frac{\psi}{3}i} }{\sqrt{1-r^2}}, \frac{e^{-\frac{\psi}{3}i}}{\sqrt{1-r^2}} \right)$.
\begin{equation}
\begin{split}
& \left< \nn_3, \nn_3 \right> = \left< g(\nn_2), g(\nn_2) \right>  = \left< \nn_2, \nn_2 \right> =\frac{1}{x^2}-1  \\
& \left< \nn_2, \nn_3 \right> = \frac{e^{-\frac{2}{3}{\psi}i}}{x^2} - \frac{e^{\frac{\psi}{3}i}}{\sqrt{1-r^2}} = \overline{\left< \nn_3, \nn_2 \right>}
\end{split}
\end{equation}

\begin{align}
N(L_2, g(L_2))
 & = \frac{(1-r^2) -x^2\sqrt{1-r^2}(e^{-{\psi}i}+e^{{\psi}i})+ x^4}{ (1-x^2)^2(1-r^2)}  \label{eqn:hol-N} \\
 & > \frac{(1-r^2) -2x^2\sqrt{1-r^2}+ x^4}{ (1-x^2)^2(1-r^2)}\\
 & > \frac{(x^2-\sqrt{1-r^2} )^2}{(1-x^2)^2} \text{~~~since~} 1-r^2 < 1\\
 & > \frac{(x^2-1)^2}{(1-x^2)^2} = 1
\end{align}
Thus,  $g(L_2) \cap L_2 = \emptyset$.
\end{proof}

\section{Areas, volumes,  and wedges}\label{sec:vol-wedge}

In this section, we define the notion of a {\it wedge} and compute its volume.
We give with some preliminaries first. The induced metric on a complex line is a hyperbolic metric of curvature
$-1$. A disc $D_r$ of radius $r$ in a complex line has area
\begin{equation}\label{eqn:area-disc}{\rm Area}\left(D_r\right) = 4\pi\sh^2\left(\frac{r}{2}\right).\end{equation}

Now,  suppose two disjoint complex lines $L_1$ and $L_2$ are a distance
$d>0$ apart.  Recalling that the projection of $L_2$ onto $L_1$ is a disk of radius
$$s(d)= 2{\rm arcsinh}\left( \frac{1}{{\rm sinh}\frac{d}{2}} \right),$$
 a routine computation yields

\begin{lem}\label{lem: projection radius}
 The area of the disc of radius half the projection radius is
\begin{equation}
{\rm Area}\left(D_\frac{s(d)}{2}\right) = 4\pi\sh^2\left(\frac{s(d)}{4}\right)= \frac{4\pi}{e^d-1}.
\end{equation}

\end{lem}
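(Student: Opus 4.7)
The plan is to directly invoke equation (\ref{eqn:area-disc}) with radius $r = s(d)/2$ and then reduce the resulting $\sinh^2(s(d)/4)$ to the claimed expression $1/(e^d-1)$ using the definition of $s(d)$ and a half-angle identity. Substituting $r = s(d)/2$ into (\ref{eqn:area-disc}) instantly yields
\begin{equation*}
\mathrm{Area}(D_{s(d)/2}) = 4\pi \sinh^2\!\left(\tfrac{s(d)}{4}\right),
\end{equation*}
so the first equality is free and the work is entirely in the second equality.

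Next, I would set $u = s(d)/2 = \mathrm{arcsinh}\!\left(1/\sinh(d/2)\right)$, so that $s(d)/4 = u/2$ and, by construction, $\sinh u = 1/\sinh(d/2)$. The hyperbolic Pythagorean identity then gives
\begin{equation*}
\cosh u = \sqrt{1 + \sinh^2 u} = \sqrt{1 + \tfrac{1}{\sinh^2(d/2)}} = \tfrac{\cosh(d/2)}{\sinh(d/2)} = \coth\!\left(\tfrac{d}{2}\right).
\end{equation*}

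Finally I apply the half-angle formula $\sinh^2(u/2) = (\cosh u - 1)/2$ and simplify:
\begin{equation*}
\sinh^2\!\left(\tfrac{s(d)}{4}\right) = \tfrac{\coth(d/2) - 1}{2} = \tfrac{\cosh(d/2) - \sinh(d/2)}{2\sinh(d/2)} = \tfrac{e^{-d/2}}{e^{d/2} - e^{-d/2}} = \tfrac{1}{e^d - 1}.
\end{equation*}
Multiplying by $4\pi$ yields the desired formula. There is no real obstacle here; the lemma is a routine unwinding of the explicit formula for $s(d)$ combined with the half-angle identity, which is why the statement is labeled as a routine computation in the excerpt.
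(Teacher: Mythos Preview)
Your proof is correct and is precisely the routine computation the paper alludes to: the paper does not spell out a proof at all, simply stating that ``a routine computation yields'' the lemma, and your argument supplies exactly those details via equation~(\ref{eqn:area-disc}), the definition of $s(d)$, and the half-angle identity.
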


We next derive a volume formula using Fermi coordinates.

 The volume form for the ball model in $(z_1,z_2)$-coordinates
 (see  \cite{Goldman} or \cite{Parker})  with
$z_{1}=x_{1}+iy_{1}$ and $z_{2}=x_{2}+iy_{2}$ is

\begin{equation}
{\rm dvol}
   = \frac{16}{(1-|z_1|^2-|z_2|^2)^3}\,dx_1dy_1dx_2dy_2
   \end{equation}
 and changing the $z_1$-coordinate to  polar coordinates gives

\begin{equation}\label{eq: volume form polar}
{\rm dvol}
  =\frac{16r}{(1-r^2-|z_2|^2)^3}\,drd{\theta}dx_2dy_2.
\end{equation}

\begin{prop}\label{prop:vol-general}
Let $R$ be a region in a  complex line $L$, and $G$ the  tubular neighborhood of $R$ of  width $\epsilon>0$,
Then the volume is
\begin{equation}\label{eq: tubular nbhd}
{\rm vol}(G) = 2{\pi}\left[ \cosh^4\frac{{\epsilon}}{2} -1 \right]{\rm Area}(R).
\end{equation}
\end{prop}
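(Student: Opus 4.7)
The plan is to reduce to the normalized situation of Subsection~\ref{sec:nomrmalization}, where we may take $L=L_1=\{(0,z_2)\in\C^2:|z_2|<1\}$, and then to introduce Fermi coordinates $(t,\theta,z_2)$ adapted to this complex line: here $t=\rho(p,\Pi_L(p))$ is the hyperbolic distance from $p$ to its orthogonal projection on $L$, $\theta$ is the angular coordinate of the normal complex direction at the foot, and $z_2\in L$ parametrizes the base point. Because both the Bergman metric and the volume form are invariant under $\mathrm{Stab}(L)$, which acts transitively on $L$, the resulting formula will be intrinsic and therefore apply to any region $R\subset L$.

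The first step is to relate the ball coordinate $r=|z_1|$ to the Fermi distance $t$. Using $\mathbf{p}=(z_1,z_2,1)$ and $\Pi_L(p)=(0,z_2)$ with the Bergman formula (\ref{def:hyp-metric}), a direct computation gives
\begin{equation*}
\cosh^2\!\tfrac{t}{2}=\frac{1-|z_2|^2}{1-r^2-|z_2|^2},\qquad\text{so}\qquad r=\sqrt{1-|z_2|^2}\,\tanh\tfrac{t}{2}.
\end{equation*}
Differentiating gives $dr=\tfrac{1}{2}\sqrt{1-|z_2|^2}\,\mathrm{sech}^2(t/2)\,dt$ and $1-r^2-|z_2|^2=(1-|z_2|^2)\mathrm{sech}^2(t/2)$. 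Plugging into the polar volume form (\ref{eq: volume form polar}) yields
\begin{equation*}
\mathrm{dvol}=\frac{8\sh(t/2)\ch^3(t/2)}{(1-|z_2|^2)^2}\,dt\,d\theta\,dx_2dy_2.
\end{equation*}

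Next I identify the area element of $L$. Restricting the Bergman metric to $L=\{(0,z_2)\}$ recovers the Poincar\'e disc of curvature $-1$ with area form $dA=\tfrac{4}{(1-|z_2|^2)^2}dx_2dy_2$, i.e.\ $dx_2dy_2=\tfrac14(1-|z_2|^2)^2\,dA$. Substituting gives the clean expression
\begin{equation*}
\mathrm{dvol}=2\sh\tfrac{t}{2}\ch^3\tfrac{t}{2}\,dt\,d\theta\,dA,
\end{equation*}
so Fermi coordinates decouple the $(t,\theta)$ direction from the base area element.

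The tubular neighborhood of $R$ of width $\epsilon$ corresponds to $\{(t,\theta,z_2):z_2\in R,\,\theta\in[0,2\pi),\,t\in[0,\epsilon]\}$, so
\begin{equation*}
\mathrm{vol}(G)=\mathrm{Area}(R)\cdot 2\pi\cdot\int_0^{\epsilon}2\sh\tfrac{t}{2}\ch^3\tfrac{t}{2}\,dt.
\end{equation*}
The substitution $u=\ch(t/2)$ gives $\int_0^{\epsilon}2\sh(t/2)\ch^3(t/2)\,dt=\ch^4(\epsilon/2)-1$, whence $\mathrm{vol}(G)=2\pi[\ch^4(\epsilon/2)-1]\mathrm{Area}(R)$. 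The only step with any real substance is the Fermi change of variables (computing $r(t,z_2)$ and the Jacobian and verifying the clean cancellation producing $(1-|z_2|^2)^{-2}$ upstairs), which I expect to be the main obstacle; the final integration is routine.
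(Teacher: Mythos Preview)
Your proof is correct and follows essentially the same approach as the paper: normalize so that $L=\{(0,z_2)\}$, use the identity $|z_1|=\sqrt{1-|z_2|^2}\tanh(t/2)$ relating the ball radius to the normal distance, and integrate the polar volume form (\ref{eq: volume form polar}) over the tube. The only cosmetic difference is that the paper integrates the $r$-variable directly from $0$ to $\sqrt{1-|z_2|^2}\tanh(\epsilon/2)$ and then recognizes the resulting $(1-|z_2|^2)^{-2}$ factor as (one quarter of) the hyperbolic area element on $L$, whereas you first change variables $r\mapsto t$ to obtain the decoupled Fermi expression $\mathrm{dvol}=2\sh(t/2)\ch^3(t/2)\,dt\,d\theta\,dA$ and then integrate; the computations are line-for-line equivalent.
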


\begin{proof}
Note that
$$ G = \bigcup_{p\in R} \{ q \in \Pi_L^{-1}(p) \mid {\rho}(p, q) \leq \epsilon \}.$$
Without loss of generality, we may assume that $L = \{(0, z_2) \in \hyp \}$.
For $p = (0, z_2) \in L$,  observe that
$${\Pi}_{L}^{-1}(p) = \left\{ (z_1, z_2 ) \in \hyp ~\mid~ |z_1| \leq  \sqrt{1-|z_2|^2} \right\}$$
is a disc (also a complex line) with Euclidean radius $\sqrt{1-|z_2|^2}$. 
The hyperbolic distance $t$ from $(0,z_2)$ to $(z_1, z_2)$  in ${\Pi}_{L_1}^{-1}(p)$ is
\begin{equation}\label{eqn:h}
\begin{split}
t &= \log{\frac{1+\frac{|z_1|}{\sqrt{1-|z_2|^2}}}{1-\frac{|z_1|}{\sqrt{1-|z_2|^2}}}}, \\
|z_1| & =\sqrt{1-|z_2|^2}\,\tanh\frac{t}{2}.
\end{split}
\end{equation}
Thus, we write  $G$ in Euclidean coordinates.
\begin{equation}
G =  \bigcup_{ (0,z_2) \in R} \{ (z_1, z_2) \in \Pi_L^{-1}(0,z_2)  ~\mid~  |z_1| \leq f(|z_2|)\}
\end{equation}
 where $f(|z_2|)= \sqrt{1-|z_2|^2}\,{\tanh}\frac{\epsilon}{2}.$
Set $z_1 = re^{i\theta}$.

Using formula (\ref{eq: volume form polar}), the volume of $G$ is
\begin{align}
{\rm vol}(G)
     & =\int_{0}^{2\pi}d\theta\iint_R\left[\int_0^{f(|z_2|)} \frac{16r}{(1-r^2-|z_2|^2)^3}dr \right]\, dx_2dy_2 \label{eqn:angle}\\
     &= 8\pi\iint_R\left[ \cosh^4\frac{\epsilon}{2} -1 \right]\frac{1}{(1-|z_2|^2)^2}\,dx_2dy_2\\
     &=  2\pi\left[ \cosh^4\frac{\epsilon}{2} -1 \right]{\rm Area}(R).
\end{align}
\end{proof}


We next define a key ingredient of  our proof of the tubular neighborhood theorem.

\begin{defn}\label{def:wedge}
Let $D_s(p_0)$ be the disc centered at $p_0$ with radius $s>0$ in a  complex line $L$.
For $\epsilon >0$ and $\psi \in [0, 2\pi]$, we consider a sector in $\Pi^{-1}_L(p_0)$,
 $$ S(\epsilon, \psi) = \left\{ q \in \Pi^{-1}_L(p_0) ~\mid~ \rho(p_0, q) < \epsilon,  -\frac{\psi}{2} \leq {\rm Arg}\, q  \leq \frac{\psi}{2} \right\}.$$
We define a \textit{wedge} $W(s, \epsilon, \psi)$ \textit{anchored} at $D_s(p_0)$ to be
\begin{equation}
  W(s, \epsilon, \psi) = \bigcup \big\{ \, g_p(S(\epsilon, \psi)) ~\mid~ p\in D_s(p_0) \,\big\}
  \end{equation}
where
 $g_p\in {\rm Stab}(L)$ is a loxodromic isometry  with trivial holonomy satisfying $g_p(p_0) = p$ (Figure \ref{fig:wedge}).
We call $\epsilon$ the \textit{width} of the wedge.
\end{defn}

\begin{figure}[htb]
 \begin{center}
     \includegraphics[width=4cm]{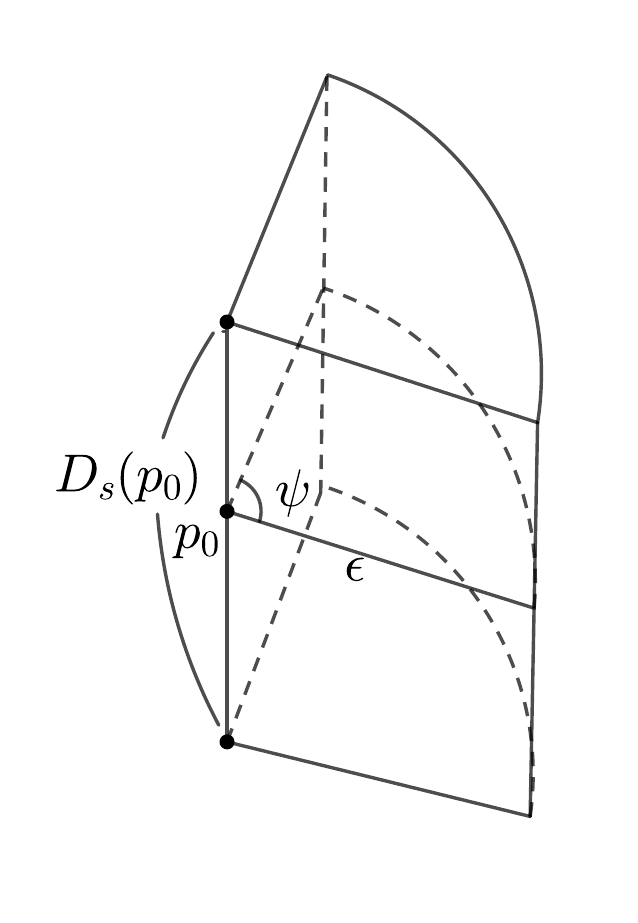}
    \caption{A wedge $W(s, \epsilon, \psi)$}
    \label{fig:wedge}
  \end{center}
\end{figure}

\begin{prop}\label{prop:vol-wedge}
For $s >0, \epsilon >0 ,  \psi \in (0, 2\pi]$,
the volume of the wedge $W(s, \epsilon, \psi)$ anchored at $D \subset L$
is
\begin{equation}
{\rm vol}(W) = \psi\left( \cosh^4\frac{\epsilon}{2} -1 \right){\rm Area}(D).
\end{equation}
\end{prop}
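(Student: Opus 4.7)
The plan is to mirror the Fermi-coordinate computation from Proposition \ref{prop:vol-general}, with the sole modification that the angular integration is restricted to an arc of length $\psi$ rather than a full circle. Intuitively, $W(s,\epsilon,\psi)$ occupies a sector of aperture $\psi$ inside the full tube of width $\epsilon$ over $D$, so its volume should be $\psi/(2\pi)$ times the tube volume in Proposition \ref{prop:vol-general}, which equals the claimed expression.

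To execute this, I would first normalize (using the transitivity of ${\rm PU}(2,1)$ and ${\rm Stab}(L)$) so that $L=\{(0,z_2)\in\hyp\}$ and the anchor center is $p_0=\vo$, so that $D=D_s(\vo)\subset L$. As in the proof of Proposition \ref{prop:vol-general}, for each $p=(0,p_2)\in L$ the fiber $\Pi_L^{-1}(p)$ consists of the points $(z_1,p_2)$ with $|z_1|\leq\sqrt{1-|p_2|^2}$, and the sector at the base point is
\begin{equation*}
S(\epsilon,\psi)=\left\{(re^{i\theta},0) : 0\leq r\leq\tanh(\epsilon/2),\; -\psi/2\leq\theta\leq\psi/2\right\}.
\end{equation*}

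The key step is to verify that the normalized trivial-holonomy loxodromic $g_p=g_{p_2}^0\in{\rm Stab}(L)$ from Definition \ref{def:normalied-loxo} preserves the angular coordinate $\arg z_1$ on the fibers of $\Pi_L$. A direct application of the matrix \eqref{normalized-lox-hol} with holonomy zero shows that $g_p(z_1,0)=\bigl(\sqrt{1-|p_2|^2}\,z_1,\,p_2\bigr)$, which scales $|z_1|$ by a positive real factor and leaves $\arg z_1$ invariant. Consequently $g_p(S(\epsilon,\psi))$ is the $\psi$-aperture sector of radius $\sqrt{1-|p_2|^2}\tanh(\epsilon/2)$ inside $\Pi_L^{-1}(p)$, and taking the union over $p\in D$ yields the clean product description
\begin{equation*}
W(s,\epsilon,\psi)=\left\{(re^{i\theta},z_2) : z_2\in D,\; 0\leq r\leq\sqrt{1-|z_2|^2}\tanh(\epsilon/2),\; -\psi/2\leq\theta\leq\psi/2\right\}.
\end{equation*}

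Finally, integrating the volume form \eqref{eq: volume form polar} over $W$ proceeds exactly as in display \eqref{eqn:angle} from the proof of Proposition \ref{prop:vol-general}, except that $\int_0^{2\pi}d\theta$ is replaced by $\int_{-\psi/2}^{\psi/2}d\theta=\psi$. Carrying the rest of that calculation through verbatim yields ${\rm vol}(W)=\psi\bigl(\cosh^4(\epsilon/2)-1\bigr){\rm Area}(D)$. The only subtle point, rather than a genuine obstacle, is confirming that trivial holonomy is precisely what licenses this product description in $(r,\theta,z_2)$ coordinates; a nonzero holonomy $\psi'\neq 0$ in $g_p$ would rotate $\arg z_1$ by $\psi'$, twisting the sectors fiber-by-fiber as $p$ varies and destroying the straightforward integration.
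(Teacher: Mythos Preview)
Your proposal is correct and follows essentially the same approach as the paper: normalize so that $L=\{(0,z_2)\}$ and $p_0=\vo$, write $W$ in the Euclidean coordinates of Proposition \ref{prop:vol-general}, and replace $\int_0^{2\pi}d\theta$ by $\int_{-\psi/2}^{\psi/2}d\theta$. Your explicit verification that the trivial-holonomy loxodromic $g_{p_2}^0$ acts on the fiber by $(z_1,0)\mapsto(\sqrt{1-|p_2|^2}\,z_1,p_2)$, hence preserves $\arg z_1$, is a detail the paper leaves implicit but which nicely justifies the product description of $W$.
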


\begin{proof}
Two wedges are isometric if and only if they have the same wedge parameters $s, \epsilon, \psi$.
Thus, we may assume that $L = \{(0, z_2) \in \hyp \}$ and $p_0 = \vo$.

We write  the wedge $W(s, \epsilon, \psi)$ anchored at $D$ in Euclidean coordinates,
\begin{equation}
W = \bigcup_{(0,z_2) \in D} \left\{ (z_1, z_2) \in \Pi_L^{-1}(0,z_2) ~\mid~  |z_1| \leq f(|z_2|),\,  -\frac{\psi}{2} < {\rm Arg}\,z_1  < \frac{\psi}{2} \right\}
\end{equation}
 where $f(|z_2|)= \sqrt{1-|z_2|^2}\,{\tanh}\frac{\epsilon}{2}.$
Applying Proposition \ref{prop:vol-general} and
 replacing the integral  $\int_{0}^{2\pi}\,d\theta$ of equation (\ref{eqn:angle})
 by $\int_{-\frac{\psi}{2}}^{\frac{{\psi}}{2}}\,d\theta$
verifies  the proposition.
\end{proof}

\section{The Holonomy Angle Lemma and the Tubular Neighborhood Theorem}\label{sec: tubes}
The purpose of this section is to prove the tubular neighborhood theorem. An important step in proving it is the holonomy angle lemma.

 Let $X = \hyp / \Gamma $ be a complex hyperbolic manifold and
 $S \hookrightarrow X$. Suppose  $d$ is  the length of the shortest orthogeodesic from $S$ to itself. Lifting   $S$ and this orthogeodesic to
 $\hyp$, we obtain connected lifts $L_1$ and $L_2$  with $d = \rho(L_1, L_2)>0$. Recall that $\Pi_{L_1}(L_2)$ is a disc of radius $s(d)$ (Proposition \ref{prop:s}). If an element of  ${\rm Stab}_{\Gamma}(L_1)$ does not
move $\Pi_{L_1}(L_2)$  very much than not only must it have non-trivial holonomy but in fact  its holonomy angle has to be large relative to $d$.
This is made precise in the  following holonomy angle lemma.


\begin{lem}[The holonomy  angle lemma]\label{lem:holonomy}
If ${\gamma} \in   {\rm Stab}_{\Gamma}(L_1)$ is a loxodromic and
 $\gamma(\vo) \in  \Pi_{L_1}(L_2)$,
then ${\gamma}$ has non-trivial holonomy $\psi$ satisfying
\begin{equation}
  \cos{\psi}  \leq \frac{1+\tanh\frac{d}{2}}{2}.
\end{equation}
\end{lem}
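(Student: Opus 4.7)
The plan is to prove the lemma in two steps: first show that $\gamma$ must have non-trivial holonomy, and then derive the bound on $\cos\psi$ by combining the formula from Lemma \ref{lem:general-hol} with the \emph{quantitative} lower bound $\rho(L_2, \gamma(L_2)) \geq d$ coming from the embeddedness of $S$.

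For the first step I would argue by contradiction. I would first note that $\gamma(L_2) \cap L_2 = \emptyset$: since $\gamma$ is loxodromic it cannot also stabilize $L_2$, as such a $\gamma$ would preserve the common perpendicular $M$ and either fix $L_1 \cap M$ and $L_2 \cap M$ individually (yielding two interior fixed points, incompatible with loxodromicity) or swap them (forcing $L_1 \cap L_2 \neq \emptyset$, contradicting disjointness); and since $S$ is embedded, distinct lifts are disjoint. Now if $\gamma$ had trivial holonomy, Lemma \ref{lem:general-no-hol} would give $\gamma(\Pi_{L_1}(L_2)) \cap \Pi_{L_1}(L_2) = \emptyset$; but $\vo$ is the center of $\Pi_{L_1}(L_2)$, so $\gamma(\vo) \in \gamma(\Pi_{L_1}(L_2))$, and the hypothesis $\gamma(\vo) \in \Pi_{L_1}(L_2)$ then yields the contradiction.

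For the second step, set $w = \gamma(\vo)$ and use Remark \ref{normalization:loxo} to replace $\gamma$ by the normalized loxodromic $g = g_w^\psi \in {\rm Stab}(L_1)$, which has the same holonomy $\psi$ and satisfies $\rho(L_2, \gamma(L_2)) = \rho(L_2, g(L_2))$. Since $\gamma(L_2)$ is a lift of $S$ distinct from $L_2$ and $d$ is the length of the shortest orthogeodesic from $S$ to itself, one has $\rho(L_2, g(L_2)) \geq d$, which via Proposition \ref{thm:polar-formula} translates to $N(L_2, g(L_2)) \geq \cosh^2 (d/2) = 1/(1-x^2)$ with $x = \tanh (d/2)$. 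Substituting the explicit expression for $N(L_2, g(L_2))$ derived in the proof of Lemma \ref{lem:general-hol} and writing $r = |w|$, $v = \sqrt{1-r^2}$, a short algebraic simplification turns this into the clean quadratic inequality $v^2 - 2v\cos\psi + x^2 \geq 0$, equivalently $\cos\psi \leq (v^2 + x^2)/(2v)$. Finally, Proposition \ref{prop:s} says that $\Pi_{L_1}(L_2)$ is a disc of Euclidean radius $\sqrt{1-x^2}$, so $r \leq \sqrt{1-x^2}$ and hence $v \in [x, 1)$; since $(v^2 + x^2)/(2v)$ is increasing on this interval, it is bounded above by its value $(1+x^2)/2$ at $v = 1$, and since $(1+x^2)/2 \leq (1+x)/2$ for $x \in [0,1]$, the lemma follows.

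The conceptual crux that I expect to be the main obstacle is identifying the right geometric input in the second step: the \emph{qualitative} fact ``$\gamma(L_2) \cap L_2 = \emptyset$'' used in Lemma \ref{lem:general-hol} is not enough, since the same computation with only that hypothesis gives only the vacuous bound $\cos\psi < 1$ as $r \to 0$. It is the strengthening to $\rho(L_2, \gamma(L_2)) \geq d$, which is forced by $d$ being the minimum distance between distinct lifts of the embedded surface $S$, that improves the estimate all the way down to $(1 + \tanh (d/2))/2$ and makes the subsequent wedge construction in the tubular neighborhood theorem go through.
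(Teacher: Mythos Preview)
Your argument is correct and follows essentially the same route as the paper: reduce to a normalized loxodromic via Remark~\ref{normalization:loxo}, use the explicit formula for $N(L_2,g(L_2))$ from equation~(\ref{eqn:hol-N}) together with the key inequality $\rho(L_2,g(L_2))\geq d$, and then bound using the constraint $\gamma(\vo)\in\Pi_{L_1}(L_2)$. Your inequality $\cos\psi\leq (v^2+x^2)/(2v)$ with $v=\sqrt{1-r^2}=\sech(l/2)$ is exactly the paper's intermediate expression $\tfrac12[\sech(l/2)+\cosh(l/2)\tanh^2(d/2)]$ in different notation, and your monotonicity argument in the final step actually yields the slightly sharper bound $\cos\psi\leq\tfrac12(1+\tanh^2(d/2))$ before you relax to $\tfrac12(1+\tanh(d/2))$; the paper bounds the two terms separately and arrives directly at the weaker form.
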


\begin{proof}
Without loss of generality, we may assume the normalizations as in subsection \ref{sec:nomrmalization} (Figure \ref{fig:setting}).
Since $\gamma(\Pi_{L_1}(L_2)) \cap  \Pi_{L_1}(L_2) \neq \emptyset$, ${\gamma}$ has non-trivial holonomy by Lemma \ref{lem:general-no-hol}.
Using Remark \ref{normalization:loxo}, it suffices to prove the lemma for a normalized loxodromic (Definition \ref{def:normalied-loxo})  $g=g^\psi_w \in {\rm Stab}(L_1)$. Set $w=re^{{\theta}i}$ and $l = \rho(\mathbf{0}, (0,re^{i\theta}))$.
\begin{equation}
\text{Note that } g(\vo) \in  \Pi_{L_1}(L_2) \Longleftrightarrow l \leq s(d). \end{equation}
Let $d' =\rho(L_2, g(L_2))$ and note that $d \leq d'$. Using equation (\ref{eqn:hol-N}) we have
\begin{align}
&(1-r^2) - 2x^2\sqrt{1-r^2}\cos{\psi}+ x^4 = (1-x^2)^2(1-r^2)\cosh^2{\frac{d'}{2}}. \label{eqn:hol-x}
\end{align}
Dividing (\ref{eqn:hol-x}) by $\sqrt{1-r^2}$ and substituting the expressions
 $$x=\tanh\frac{d}{2}  \text{ and } r=\tanh\frac{l}{2}$$
 we obtain
\begin{equation}\label{eqn:distance}
{\rm sech}\frac{l}{2} -2\tanh^2{\frac{d}{2}}\cos{\psi} + \cosh\frac{l}{2}\tanh^4{ \frac{d}{2} }
    = {\rm sech}^4\frac{d}{2}{\rm sech}\frac{l}{2}\cosh^2\frac{d'}{2}.
\end{equation}
Solving for  $\cos{\psi}$
\begin{align}
  \cos{\psi}
  &= \frac{ {\rm sech}\frac{l}{2} + \cosh\frac{l}{2}\tanh^4{\frac{d}{2}}-{\rm sech}^4\frac{d}{2}{\rm sech}\frac{l}{2}\cosh^2\frac{d'}{2}}{2\tanh^2{\frac{d}{2}}}  \\
  &\leq  \frac{ {\rm sech}\frac{l}{2} + \cosh\frac{l}{2}\tanh^4{\frac{d}{2}} -{\rm sech}^2\frac{d}{2}{\rm sech}\frac{l}{2}}{2\tanh^2{\frac{d}{2}}}
      \text{~~~since~}  d \leq d'  \\
   &= \frac{1}{2}\left[{\rm sech}\frac{l}{2} + \cosh\frac{l}{2}\tanh^2{\frac{d}{2}} \right] \label{eqn:hol-func} \\
   &\leq  \frac{1}{2}\left[ 1 + \tanh{\frac{d}{2}} \right].
\end{align}
Where the last inequality follows from the  fact  that
${\rm sech}(x)  \leq 1 ,\text{~and~} l \leq s(d)$ implies $\cosh\frac{l}{2} \leq \cosh\frac{s(d)}{2}=\coth\frac{d}{2}$.
\end{proof}

Note that the cosine holonomy angle function $\frac{1}{2}\left( 1 + \tanh{\frac{d}{2}} \right)$ is increasing as a function of $d$ and
strictly bounded from above by $1$.

\begin{thm}[Tubular neighborhood]\label{thm:tubular}
There exists a positive function $c(A)$, depending only on  $A = {\rm Area}(S)$
 so that $S \hookrightarrow X$ has a tubular neighborhood $\mathcal{N}(S)$ of width at least $c(A)$.
 The function $c(A)$ can be taken to be
\begin{equation}\label{eqn:width-bound}
c(A) =\frac{1}{4}\log\left( \frac{2}{A}+1 \right).
\end{equation}
\end{thm}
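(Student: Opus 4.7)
The plan is to follow the outline of Section 1.3: let $d > 0$ be the length of the shortest orthogeodesic from $S$ to itself (positive by hypothesis), so that $S$ already has an embedded tubular neighborhood of width $d/2$, and it suffices to prove $d \geq 2c(A)$. Lifting this orthogeodesic to $\hyp$ produces connected lifts $L_1, L_2$ of $S$ at distance $d$; by the normalization of subsection \ref{sec:nomrmalization} we may assume $L_1 \cap M = \{\vo\}$ and $\Pi_{L_1}(L_2) = D_{s(d)}(\vo) \subset L_1$ with $s(d)$ as in Proposition \ref{prop:s}.

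Next, set $\psi_0 = \arccos\bigl((1 + \tanh(d/2))/2\bigr)$ and define the wedge
\[
W = W\bigl(s(d)/2,\, d/2,\, \psi_0\bigr)
\]
anchored on the smaller disc $D_{s(d)/2}(\vo) \subset L_1$. The central claim is that $W$ projects injectively into $X$; equivalently $\gamma W \cap W = \emptyset$ for every $\gamma \in \Gamma \setminus \{e\}$. I would establish this by cases. If $\gamma(L_1) \neq L_1$, then $\gamma L_1$ is another lift of $S$ and by minimality of $d$ is at distance $\geq d$ from $L_1$, so $W \subset N_{d/2}(L_1)$ and $\gamma W \subset N_{d/2}(\gamma L_1)$ lie in disjoint tubes. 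If $\gamma \in {\rm Stab}_\Gamma(L_1)$ has trivial holonomy and $\gamma(\vo) \in D_{s(d)}$, Lemma \ref{lem:general-no-hol} forces $\gamma L_2 \cap L_2 \neq \emptyset$, hence $\gamma L_2 = L_2$ by embeddedness of $S$; then $\gamma$ stabilizes both complex lines and therefore the unique common perpendicular $\delta$, fixing the endpoint $\vo$ and contradicting the fact that a non-identity loxodromic or parabolic in ${\rm Stab}_\Gamma(L_1)$ with trivial holonomy has no fixed point in $L_1$. If $\gamma$ has non-trivial holonomy $\psi$ and the anchor translates meet, Lemma \ref{lem:holonomy} gives $\cos \psi \leq \cos \psi_0$, so rotation by $\psi$ carries the angular sector of width $\psi_0$ off itself within every fiber, yielding $\gamma W \cap W = \emptyset$. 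The parabolic case is handled by the same dichotomy.

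The final step is the volume comparison. Since $W \subset N_{d/2}(L_1)$ and embeds in $X$, it lies in $N_{d/2}(S)$, so ${\rm vol}(W) \leq {\rm vol}(N_{d/2}(S))$. Substituting the wedge volume from Proposition \ref{prop:vol-wedge} (with anchor area $4\pi/(e^d - 1)$ by Lemma \ref{lem: projection radius}) and the tube volume $2\pi(\cosh^4(d/4) - 1)A$ from Proposition \ref{prop:vol-general} with $R = S$, and cancelling the common factor $\cosh^4(d/4) - 1$, yields
\[
\frac{2\psi_0}{e^d - 1} \leq A,
\]
i.e., $e^d - 1 \geq 2\psi_0(d)/A$. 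Because $\psi_0(d) \geq \pi/3$ for all $d \geq 0$ (as $(1+\tanh(d/2))/2 < 1$ gives $\psi_0 > 0$, and the bound $\psi_0 \geq \pi/3$ holds since $\cos\psi_0 \leq 1$), rearrangement produces an explicit lower bound on $d$ as a function of $A$, from which the claimed $c(A) = \tfrac{1}{4}\log(2/A + 1)$ follows after a routine estimate.

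I expect the main obstacle to be twofold. First, the fiberwise disjointness in the non-trivial holonomy case requires tracking precisely how an element acts on the sector structure of $W$, which is exactly where the holonomy angle lemma is used and where the choice of $\psi_0$ must be matched to $\cos\psi$. Second, converting the implicit inequality $e^d - 1 \geq 2\psi_0(d)/A$ into the explicit formula $c(A) = \tfrac{1}{4}\log(2/A + 1)$ requires handling the $d$-dependence of $\psi_0$, presumably by replacing $\psi_0$ with a uniform lower bound and absorbing the resulting slack into the denominator constant.
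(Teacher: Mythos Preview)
Your overall strategy matches the paper's: lift the shortest orthogeodesic, project $L_2$ onto $L_1$, build an embedded wedge over the half-radius disc $D_{s(d)/2}$, and compare its volume with that of $N_{d/2}(S)$.  The embedding argument and the volume comparison are correct and lead exactly to the inequality
\[
e^{d}-1 \;\geq\; \frac{2\psi_{0}(d)}{A},\qquad
\psi_{0}(d)=\arccos\!\left(\frac{1+\tanh(d/2)}{2}\right).
\]
(The paper organizes this slightly differently: it sets the wedge angle to be the actual minimum holonomy $\Psi$ over those $\gamma\in{\rm Stab}_\Gamma(L_1)$ moving the half-disc into itself, and then invokes the holonomy angle lemma to bound $\Psi\geq\psi_0$.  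Your choice of using $\psi_0$ directly as the sector angle is a legitimate shortcut and gives the same inequality.)

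The genuine gap is in your last paragraph.  The assertion $\psi_{0}(d)\geq\pi/3$ is false: since $(1+\tanh(d/2))/2>1/2$ for every $d>0$, one has $\psi_{0}(d)<\pi/3$, and in fact $\psi_{0}(d)\to 0$ as $d\to\infty$.  So there is no uniform positive lower bound on $\psi_{0}$ available, and you cannot absorb the $d$-dependence of $\psi_{0}$ into a constant as you propose.  The paper resolves this with the elementary estimate $\arccos x\geq 1-x$, which yields
\[
\psi_{0}(d)\;\geq\;\frac{1-\tanh(d/2)}{2}\;=\;\frac{1}{e^{d}+1}.
\]
Substituting into $e^{d}-1\geq 2\psi_{0}/A$ gives $(e^{d}-1)(e^{d}+1)\geq 2/A$, i.e.\ $e^{2d}\geq 2/A+1$, hence $d\geq\tfrac{1}{2}\log(2/A+1)=2c(A)$, which is the claimed bound.  (The paper also runs a separate case $\Psi\geq\pi/2$, including $F=\emptyset$, but that case produces the stronger bound $d>\log(\pi/A+1)$ and is never the binding one; your formulation with wedge angle $\psi_0<\pi/3$ simply bypasses it.)
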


\begin{rem} We remark that the surface $S$ can be a  finite area non-compact surface as long as  $S$ is not asymptotic to itself. Of course,  if the surface is closed it is automatically not asymptotic to itself. Also note that  $X$ can be  finite or infinite volume.
\end{rem}

\begin{proof}
 Suppose $d$ is the shortest orthogeodesic  from $S$ to itself and let  $L_1$ and  $L_2$  be lifts of $S$ so that $d =\rho(L_1, L_2).$
Without loss of generality, we may normalize $L_1$ and $L_2$ as in subsection \ref{sec:nomrmalization} (see Figure \ref{fig:setting}).
We next project $L_2$ into  $L_1$.
The projection $\Pi_{L_1}(L_2) \subseteq L_1$ is a disc centered at
$\vo$ with radius $s = s(d) =  2\log\coth\frac{d}{4}$ (see Equation (\ref{formula:radius-projection})), and
let $D_{\frac{s}{2}} \subseteq L_1$ be the disc with the same center as
$\Pi_{L_1}(L_2)$  but half of its radius.
Set $F$ to be the following subset of the stabilizer of $L_1$ in $\Gamma$,
 $$F=\big\{ \gamma \in {\rm Stab}_{\Gamma}(L_1) - \{ id \big\} \mid {\gamma}(D_{\frac{s}{2}}) \cap D_{\frac{s}{2}} \neq \emptyset\}.$$
Then the discreteness of $\Gamma$ implies that $F$ is a finite set,
and by Lemma \ref{lem:general-no-hol} any $\gamma \in F$ has  non-trivial holonomy.
Let $\Psi$ be the minimum holonomy of the elements of $F$,
$$\Psi = \text{min}\{ hol(\gamma) \mid  \gamma \in F \}.$$
Now, if $F \ne \emptyset$ then by definition $\Psi$ is between $0$ and $\pi$.
Otherwise, if $F=\emptyset$,  and we set $\Psi = 2\pi$.

Let  $W$ be the wedge $W(\frac{s}{2}, \frac{d}{2}, \Psi )$ anchored  on  $D_{\frac{s}{2}}$.
By construction $\gamma (W) \cap W = \emptyset$ for all $\gamma \in  {\rm Stab}_\Gamma(L_1) -\{id\}$.
Furthermore, since the width of $W$ is  $\frac{d}{2}$ and $d$ is the shortest distance between the lifts of $S$, we  have that $\gamma(W) \cap W =\emptyset$ for all $\gamma \in  \Gamma -\{id\}$.
Thus the wedge $W$ embeds in $X$ and we have,
\begin{equation}\label{eqn:v-c}{\rm vol}(W) \leq {\rm vol}(\mathcal{N}_{\frac{d}{2}}(S))\end{equation}
where $\mathcal{N}_{\frac{d}{2}}(S)$ is the tubular neighborhood of $S$ having  width $\frac{d}{2}$.
Using Propositions \ref{prop:vol-general} and \ref{prop:vol-wedge} for the volume on both sides of (\ref{eqn:v-c}) we have
\begin{equation}\label{thm-pf-1}
 \Psi \left[ \cosh^4\frac{d}{4} -1 \right]{\rm Area}\left(D_{\frac{s}{2}}\right) <  2\pi \left[ \cosh^4\frac{d}{4} -1 \right]{\rm Area}(S).
\end{equation}

Using the fact that $ {\rm Area}\left(D_\frac{s}{2}\right) = 4\pi\sh^2\left(\frac{s(d)}{4}\right)= \frac{4\pi}{e^d-1}$
(Lemma \ref{lem: projection radius}) and
solving for $\Psi$ we obtain
\begin{equation}\label{eq:inequality for psi}
  \Psi < \frac{A}{2}(e^d -1).
\end{equation}
There are two cases to consider.\\
Case (1).  $\frac{\pi}{2} \leq \Psi \leq \pi$:
 inequality (\ref{eq:inequality for psi}) implies that  $\frac{\pi}{2}  \leq \Psi < \frac{A}{2}  (e^d -1)$. Solving for $d$ we have,
\begin{equation} \label{eq:lower bound d, case 1}
d > \log\left( \frac{\pi}{A}+1 \right).
\end{equation}
Case (2). $0 \leq \Psi \leq \frac{\pi}{2}$:
an application of the holonomy angle Lemma (Lemma \ref{lem:holonomy}) coupled with the inequality  (\ref{eq:inequality for psi}) yields
\begin{equation}\label{eq:width-estimation}
\arccos\left( \frac{1+ \tanh\frac{d}{2}}{2} \right)
\leq  \frac{A}{2}(e^{d}-1).
\end{equation}
Since $\arccos{x} \geq 1-x $ for $0 \leq x \leq 1$, we obtain
\begin{equation}
1- \left[\frac{1+\tanh\frac{d}{2}}{2} \right]  \leq \frac{A}{2} (e^d-1).
\end{equation}
Solving for $d$, we have
\begin{equation} \label{eq:lower bound d, case 2}
d \geq \frac{1}{2} \log\left(\frac{2}{A}+1 \right).
\end{equation}
Considering  the two cases (\ref{eq:lower bound d, case 1}) and (\ref{eq:lower bound d, case 2}),  the distance $d = \rho(L_1, L_2)$ must satisfy
\begin{equation*}
d \geq \min \bigg\{ \log\left( \frac{\pi}{A}+1 \right), \frac{1}{2} \log\left( \frac{2}{A}+1 \right) \bigg\}
  =\frac{1}{2} \log\left( \frac{2}{A}+1 \right).
\end{equation*}
Therefore $S$ has a tubular neighborhood of width
\begin{equation*}
\frac{1}{4}\log\left( \frac{2}{A}+1 \right).
\end{equation*}
\end{proof}

\section{Disjoint  embedded surfaces}\label{sec: Two embedded surfaces}

Let $X = \hyp / \Gamma $ be a complex hyperbolic manifold and $S_1, S_2 \hookrightarrow X$ two disjoint embedded  complex geodesic surfaces in $X$ with distance $d = {\rho}(S_1, S_2) > 0$.
Lifting the shortest orthogeodesic from $S_1$ to $S_2$ to the universal cover $\hyp$,
we have two complex lines $L_1$ and $L_2$ with distance $\rho(L_1, L_2)=d$.
Without loss of generality, we normalize $L_1$ and $L_2$ as in subsection \ref{sec:nomrmalization} (see Figure \ref{fig:setting}).
Note that the projection $\Pi_{L_1}(L_2) \subseteq L_1$ is a disc
$D_{s}$ centered at
$\vo$ with radius $s = s(d) =  2\log\coth\frac{d}{4}$ (Equation (\ref{formula:radius-projection})).
Denote  the disc with the same center as $\Pi_{L_1}(L_2)$  but half of its radius by $D_{\frac{s}{2}} \subseteq L_1$ and $A_k$  the area of the surface $S_k$ ($k=1,2$).

\begin{lem}\label{lem:two-embedded}
If  $\gamma(\vo) \in  \Pi_{L_1}(L_2)$ for some non-identity element
${\gamma} \in   {\rm Stab}_{\Gamma}(L_1)$,
then
\begin{equation} \label{eq: distance from L1 to L2}
 \rho(L_1, L_2)> \frac{1}{4} \log \left(\frac{2}{{\rm max}\{A_{1},A_{2}\} }+1\right).
\end{equation}
\end{lem}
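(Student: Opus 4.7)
The plan is to adapt the wedge-and-volume argument of Theorem \ref{thm:tubular} to $L_1$, with the embeddedness of $S_2$ replacing the role of self-non-intersection of $S$ in the single-surface setting. Since $D_{s/2}$ is the disc of radius $s/2$ centered at $\vo$, an easy triangle-inequality/midpoint argument shows that the hypothesis $\gamma(\vo) \in D_s$ is equivalent to $\gamma(D_{s/2}) \cap D_{s/2} \ne \emptyset$, so the set
\[
F = \{\gamma \in {\rm Stab}_\Gamma(L_1) \setminus \{id\} \mid \gamma(D_{s/2}) \cap D_{s/2} \ne \emptyset\}
\]
is non-empty (and finite, by discreteness of $\Gamma$), exactly paralleling the set $F$ from the proof of Theorem \ref{thm:tubular}.

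The first main step is to show that every $\gamma \in F$ has non-trivial holonomy. If some $\gamma \in F$ had trivial holonomy, then since $\gamma(\vo) \in \gamma(D_s) \cap D_s$, Lemma \ref{lem:general-no-hol} would give $\gamma(L_2) \cap L_2 \ne \emptyset$; as the lifts of the embedded surface $S_2$ are disjoint and non-asymptotic in $\hyp$, this forces $\gamma(L_2) = L_2$. Then $\gamma$ preserves the unique common perpendicular $M$ of $L_1$ and $L_2$, and from $M \cap L_1 = \{\vo\}$ one obtains $\gamma(\vo) = \vo$, making $\gamma$ elliptic. Since $\Gamma$ is discrete and torsion-free, $\gamma = id$, a contradiction. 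Hence Lemma \ref{lem:holonomy} applies to every $\gamma \in F$ and yields $\cos({\rm hol}(\gamma)) \leq (1 + \tanh(d/2))/2$.

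Setting $\Psi = \min\{|{\rm hol}(\gamma)| \mid \gamma \in F\} > 0$, I would build the wedge $W = W(s/2, d/2, \Psi)$ anchored on $D_{s/2}$. By construction, translates of $W$ by non-identity elements of ${\rm Stab}_\Gamma(L_1)$ are disjoint from $W$. For $\gamma' \in \Gamma \setminus {\rm Stab}_\Gamma(L_1)$, the lines $L_1$ and $\gamma'(L_1)$ are distinct lifts of $S_1$: either $\rho(L_1, \gamma'(L_1)) > d$, so the $d/2$-neighborhoods containing $W$ and $\gamma'(W)$ are disjoint, or $S_1$ has a self-orthogeodesic of length at most $d$, in which case Theorem \ref{thm:tubular} applied to $S_1$ directly gives $d \geq \frac{1}{2}\log(2/A_1 + 1) > \frac{1}{4}\log(2/\max\{A_1,A_2\} + 1)$, finishing the proof. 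In the remaining case $W$ embeds in $\mathcal{N}_{d/2}(S_1)$; comparing volumes via Propositions \ref{prop:vol-wedge} and \ref{prop:vol-general} together with Lemma \ref{lem: projection radius} yields $\Psi < (A_1/2)(e^d - 1)$. Combining this with $\cos \Psi \leq (1+\tanh(d/2))/2$ and the inequality $\arccos x \geq 1-x$ for $0 \leq x \leq 1$, the case analysis at the end of the proof of Theorem \ref{thm:tubular} again produces $d \geq \frac{1}{2}\log(2/A_1 + 1) > \frac{1}{4}\log(2/\max\{A_1, A_2\} + 1)$. The main technical obstacle is the non-trivial-holonomy step, where the two-surface hypothesis genuinely enters the argument through Lemma \ref{lem:general-no-hol} and the embeddedness of $S_2$.
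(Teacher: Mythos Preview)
Your argument has a genuine gap at the step where you invoke Lemma~\ref{lem:holonomy}. That lemma is stated and proved in the single-surface context of Section~\ref{sec: tubes}, where $L_1$ and $L_2$ are both lifts of the \emph{same} surface $S$ and $d$ is the length of the \emph{shortest} self-orthogeodesic of $S$. Its proof uses the inequality $d \leq d' := \rho(L_2,\gamma L_2)$ (look at the line ``since $d\le d'$'' just after equation~(\ref{eqn:distance})), which holds there because $L_2$ and $\gamma L_2$ are again lifts of $S$ and $d$ is minimal among such distances. In the present two-surface situation, $d=\rho(L_1,L_2)=\rho(S_1,S_2)$ while $d'$ is a distance between two lifts of $S_2$; there is no reason for $d\le d'$, so the conclusion $\cos\Psi \le \tfrac{1}{2}(1+\tanh\tfrac{d}{2})$ is simply not available. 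Without it your case $0<\Psi<\pi/2$ collapses, and indeed your argument would produce $d\ge \tfrac{1}{2}\log(2/A_1+1)$ depending on $A_1$ alone, which is strictly stronger than what the lemma asserts and is a warning sign.

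The paper's proof avoids Lemma~\ref{lem:holonomy} altogether. It goes back to the raw identity~(\ref{eqn:distance}) relating $d$, $d'$, $l$ and $\Psi$, and splits on the sign of $\cos\Psi$. When $\cos\Psi\ge 0$ (small holonomy) it deduces $d'\le 2d$ directly from~(\ref{eqn:distance}) and then applies Theorem~\ref{thm:tubular} to $S_2$ to bound $d'$ below; this is precisely where $A_2$ enters and why the final bound must involve $\max\{A_1,A_2\}$. When $\cos\Psi\le 0$ (so $\Psi\ge\pi/2$) no holonomy estimate is needed: one embeds a wedge of fixed sector $\pi/2$ and width $\epsilon=c(A_1)$ (the tube width already guaranteed by Theorem~\ref{thm:tubular}) into $\mathcal{N}_\epsilon(S_1)$ and compares volumes. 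Your non-trivial-holonomy step and your handling of the embedding of the wedge via the dichotomy on the shortest self-orthogeodesic of $S_1$ are both fine; the problem is solely the unjustified appeal to Lemma~\ref{lem:holonomy}.
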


\begin{proof}
Set $d=\rho(L_1, L_2)$ and
$d' = \rho(L_2, {\gamma}(L_2))$. There will be finitely many translates
of the origin that are contained in the disc  $\Pi_{L_1}(L_2)$. Of course,
by Lemma  \ref{lem:general-no-hol},  they must all have non-trivial holonomy. Let $\gamma$ be the one with the smallest holonomy.
Using the normalization of Remark \ref{normalization:loxo}, we may assume that $\gamma$ is the normalized loxodromic.
Set $l = \rho(\mathbf{0}, \gamma(\mathbf{0}))$.
From  the holonomy angle Lemma \ref{lem:holonomy} and Equation (\ref{eqn:distance}) we have
\begin{align}
\cosh^2\frac{d'}{2}
 &= \cosh^4\frac{d}{2} - 2\cosh\frac{l}{2}\cosh^2\frac{d}{2}\sinh^2\frac{d}{2}\cos\Psi+\cosh^2\frac{l}{2}\sinh^4\frac{d}{2}.
\end{align}
There are two cases to consider depending on the sign of $\cos\Psi$.

\begin{itemize}

\item  $\cos\Psi \geq 0$:

Applying the fact that $l \leq s(d)$ implies $\cosh\frac{l}{2}\leq \coth\frac{d}{2}$,
\begin{align}
\cosh^2\frac{d'}{2}
  &\leq \cosh^2\frac{d}{2}\left[\cosh^2\frac{d}{2}+\sinh^2\frac{d}{2}\right] \label{eqn:2-em-1}\\
  &\leq \left[\cosh^2\frac{d}{2}+\sinh^2\frac{d}{2}\right]^2 =\cosh^2d.
\end{align}
Thus, we obtain $\frac{d'}{2} \leq d$ or equivalently,
$$\rho(L_1,L_2)>\frac{1}{2}\rho(L_2, {\gamma}(L_2))>
\frac{1}{4} \log \left(\frac{2}{A_2}+1\right)
$$
where the last inequality follows from Theorem \ref{thm:tubular}.

\item  $\cos\Psi \leq 0$ or equivalently $\frac{\pi}{2} \leq \Psi \leq \pi$.
Set $\epsilon =\frac{1}{4} \log \left(\frac{2}{A_1}+1 \right)$.

First observe that the wedge

\begin{align*}
W=  W\left(\frac{s(d)}{2}, \epsilon, \frac{\pi}{2} \right)
\end{align*}
embeds in the tubular neighborhood $\mathcal{N}_{\epsilon}(S_{1}).$
This follows from the fact that any element of ${\rm Stab}_{\Gamma}(L_1)$ either moves the disc $\Pi_{L_1}(L_2)$ away from itself or if it doesn't then has holonomy bigger than $\frac{\pi}{2}$.
Next we compare the volume of $W$ with the volume of
$\mathcal{N}_{\epsilon}(S_{1})$.  Using expresion (\ref{thm-pf-1})  we have
\begin{equation}
 \frac{\pi}{2} \left[ \cosh^4\frac{\epsilon}{2} -1 \right]{\rm Area}\left(D_{\frac{s(d)}{2}}\right) <  2\pi \left[ \cosh^4\frac{\epsilon}{2} -1 \right] A_{1}.
 \end{equation}
 Using Lemma \ref{lem: projection radius} and simplifying we have

\begin{equation}
 \frac{\pi}{e^d-1} < A_{1}
\end{equation}
or equivalently
$$ \rho(L_1, L_2)> \log \left(\frac{\pi}{A_1}+1 \right)
>\frac{1}{4}\log \left(\frac{2}{A_1}+1 \right).
$$
\end{itemize}
In either case we have
$$\rho(L_1, L_2)>{\rm min}
 \left\{\frac{1}{4}\log \left(\frac{2}{A_1}+1\right), \frac{1}{4}\log
 \left(\frac{2}{A_2}+1\right)\right\}
 $$
 and the conclusion follows.
\end{proof}

\begin{lem} \label{lem: translates disjoint}
If  $\gamma(\vo) \notin  \Pi_{L_1}(L_2)$ for all  non-identity elements
${\gamma} \in   {\rm Stab}_{\Gamma}(L_1)$,
then
\begin{equation}
 \rho(L_1, L_2)> \log \left(\frac{4\pi}{A_1}+1\right).
\end{equation}
\end{lem}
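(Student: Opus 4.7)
The plan is to observe that the hypothesis is exactly strong enough to force the half-radius disc $D_{s/2} \subset L_1$ to inject into the quotient surface $S_1 = L_1/{\rm Stab}_\Gamma(L_1)$, so that the bound on $d$ falls out of a single area comparison, with neither a wedge nor the holonomy machinery of the previous lemma needed.

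The first step would be a triangle-inequality argument to obtain the embedding. If some non-identity $\gamma \in {\rm Stab}_\Gamma(L_1)$ identified two points $p, \gamma(p) \in D_{s/2}$, then since $\gamma$ acts as a hyperbolic isometry of $L_1$ we would have $\rho(\vo, \gamma(\vo)) \le \rho(\vo, \gamma(p)) + \rho(\gamma(p),\gamma(\vo)) = \rho(\vo,\gamma(p)) + \rho(p,\vo) \le s$, placing $\gamma(\vo)$ inside $\Pi_{L_1}(L_2) = D_s$ and contradicting the hypothesis. Hence the quotient map $L_1 \to S_1$ is injective on $D_{s/2}$, whence ${\rm Area}(D_{s/2}) \le A_1$. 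Combined with Lemma \ref{lem: projection radius}, which evaluates the left-hand side as $4\pi/(e^d - 1)$, this rearranges immediately to $d \ge \log(4\pi/A_1 + 1)$.

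The one place I expect to need a little care is upgrading $\ge$ to the strict inequality of the statement. Equality would force the compact disc $D_{s/2}$ to exhaust $S_1$ up to a null set, but $S_1$ is an embedded complex geodesic surface of finite area, hence a complete finite-area hyperbolic surface with non-trivial topology, and so cannot be homeomorphic to an embedded closed disc; consequently ${\rm Area}(D_{s/2}) < A_1$ strictly. (In the extreme case $A_1 = \infty$, excluded by the finite-area hypothesis of Theorem A anyway, the claim reduces to $d > 0$, which is automatic from $d = \rho(L_1,L_2) > 0$.) This would give the desired bound $d > \log(4\pi/A_1+1)$.
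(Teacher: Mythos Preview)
Your proposal is correct and follows essentially the same route as the paper: the paper's proof simply asserts that the hypothesis forces the disc $D_{s(d)/2}$ to embed in $S_1$, then invokes Lemma~\ref{lem: projection radius} and the strict area comparison ${\rm Area}(D_{s(d)/2}) < A_1$ to solve for $d$. Your triangle-inequality justification for the embedding and your topological argument for strictness are details the paper leaves implicit, but the underlying logic is identical.
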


\begin{proof}
Recall $d=\rho (L_1, L_2)$.
Since $\gamma(\vo) \notin  \Pi_{L_1}(L_2)$ for all non-identity elements
$\gamma \in  {\rm Stab}_{\Gamma}(L_1)$, the disc of radius
$\frac{s(d)}{2}$ must embed in the quotient surface $S_1$. Using
Lemma \ref{lem: projection radius} we have
${\rm Area}\left(D_\frac{s(d)}{2}\right) <A_1$, and hence
 \begin{equation}
 4\pi\sh^2\left(\frac{s(d)}{4}\right)= \frac{4\pi}{e^d-1}<A_1
\end{equation}
Solving for $d$ we have
$d> \log \left(\frac{4\pi}{A_1}+1\right).$
\end{proof}

\begin{thm}\label{thm:2-embedded}
Let $X = \hyp / \Gamma $  and $S_1, S_2 \hookrightarrow X$ two disjoint embedded complex geodesic surfaces in $X$ with distance ${\rho}(S_1, S_2) >0.$
Then $S_1$ and $S_2$ have disjoint tubular neighborhoods of width
$$
\frac{1}{8}
 \log \left(\frac{2}{{\rm max}\{A_1, A_2\}}+1\right)
$$
where $A_k = {\rm Area}(S_k)$ for $k=1,2$.
\end{thm}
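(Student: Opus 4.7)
The plan is to combine the two preceding lemmas with the single-surface tubular neighborhood Theorem \ref{thm:tubular}. Let $d = \rho(L_1,L_2) = \rho(S_1,S_2)$, and set $A = \max\{A_1,A_2\}$ and $\epsilon = \frac{1}{8}\log\left(\frac{2}{A}+1\right)$. To prove that the open $\epsilon$-neighborhoods $\mathcal{N}_{\epsilon}(S_1)$ and $\mathcal{N}_{\epsilon}(S_2)$ are each embedded and are disjoint, I would verify two inequalities: $\epsilon \le c(A_k)$ for $k=1,2$ (to invoke Theorem \ref{thm:tubular} and get each tube embedded), and $2\epsilon < d$ (so the two tubes are mutually disjoint, since any path from one to the other must have length at least $d$).

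The first inequality is immediate: since $A \ge A_k$, we have
\begin{equation*}
\epsilon = \tfrac{1}{8}\log\bigl(\tfrac{2}{A}+1\bigr) \le \tfrac{1}{4}\log\bigl(\tfrac{2}{A_k}+1\bigr) = c(A_k),
\end{equation*}
so each tube of width $\epsilon$ is embedded by Theorem \ref{thm:tubular}.

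For the second inequality, the key point is to show that $d > \frac{1}{4}\log\left(\frac{2}{A}+1\right) = 2\epsilon$. I would split into cases based on whether non-identity elements of $\mathrm{Stab}_\Gamma(L_1)$ (or $\mathrm{Stab}_\Gamma(L_2)$) send $\vo$ into the projection disc $\Pi_{L_1}(L_2)$ (resp.\ $\Pi_{L_2}(L_1)$). If some such element exists in at least one of the two stabilizers, then Lemma \ref{lem:two-embedded} (applied to whichever of the two lines carries such an element, after first renormalizing so that this line plays the role of $L_1$ in the setup of subsection \ref{sec:nomrmalization}) gives exactly $d > \frac{1}{4}\log\left(\frac{2}{A}+1\right)$. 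If instead no such element exists on either side, then Lemma \ref{lem: translates disjoint} applies with both choices of which line is projected onto which, yielding the stronger bound $d > \log\left(\frac{4\pi}{A}+1\right) > \frac{1}{4}\log\left(\frac{2}{A}+1\right)$.

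In every case, $d > 2\epsilon$, so the tubes are disjoint, completing the proof. The only subtle point (and the only real obstacle) is verifying that the asymmetric formulation of Lemma \ref{lem:two-embedded}, where $L_1$ and $L_2$ play distinguished roles in the normalization of subsection \ref{sec:nomrmalization}, can be applied in either direction to yield the bound with $\max\{A_1,A_2\}$ on the right-hand side; this is just a matter of relabeling $L_1 \leftrightarrow L_2$ and invoking the same normalization argument. Once the case analysis is set up correctly, the conclusion is essentially bookkeeping.
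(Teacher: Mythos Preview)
Your proposal is correct and matches the paper's proof almost exactly: the paper likewise takes the min of the bounds from Lemmas~\ref{lem:two-embedded} and~\ref{lem: translates disjoint} to obtain $\rho(S_1,S_2) > \tfrac{1}{4}\log\bigl(\tfrac{2}{\max\{A_1,A_2\}}+1\bigr)$, then combines this with Theorem~\ref{thm:tubular} to get disjoint tubes of half that width. The only cosmetic difference is that the paper does not bother with the $L_1\leftrightarrow L_2$ symmetry you flag at the end; it simply fixes $L_1$ as the projection target, applies the dichotomy there, and observes that $\log\bigl(\tfrac{4\pi}{A_1}+1\bigr) > \tfrac{1}{4}\log\bigl(\tfrac{2}{\max\{A_1,A_2\}}+1\bigr)$ already suffices in the second case.
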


\begin{proof}
Putting Lemmas \ref{lem:two-embedded} and \ref{lem: translates disjoint} together and noting that by assumption $\rho(L_1,L_2)=\rho(S_1, S_2)$, we see that
\begin{displaymath}
 {\rho}(S_1, S_2) >
 {\rm min}
 \left\{\frac{1}{4} \log \left(\frac{2}{{\rm max}\{A_{1},A_{2}\} }+1\right),
\log \left(\frac{4\pi}{A_1}+1\right) \right\}
>\frac{1}{4} \log \left(\frac{2}{{\rm max}\{A_{1},A_{2}\} }+1\right)
  \end{displaymath}

 Moreover, we know from  Theorem  \ref{thm:tubular}
 that $S_1$ and $S_2$ have tubular neighborhoods of width
$\frac{1}{4} \log \left(\frac{2}{A_1}+1\right)$ and
$\frac{1}{4} \log \left(\frac{2}{A_2}+1\right)$, respectively.
 It follows that  a  width of
\begin{displaymath}
 \frac{1}{8}
 \log \left(\frac{2}{{\rm max}\{A_1, A_2\}}+1\right)
\end{displaymath}
is a tubular neighborhood for both $S_1$ and $S_2$, and moreover they are disjoint.
\end{proof}

\begin{cor}\label{cor:main}
Suppose that $X = \hyp / \Gamma $ is a complex hyperbolic manifold and
 $S_1, S_2, \dots ,S_n$ are pairwise disjoint embedded complex geodesic surfaces in $X$ satisfying ${\rho_{X}}(S_k, S_j)>0$, for $k\neq j$.
Then \begin{equation} \label{cor: lower volume bound}
{\rm vol}(X) \geq  2{\pi}nA\Biggl[ \cosh^4 \bigg( {\frac{1}{16}\log\left(\frac{2}{A} +1 \right)}\biggr) - 1 \Biggr]
\end{equation}
where $A = {\rm max}\{ {\rm Area}(S_1),  \dots ,{\rm Area}(S_n) \}$.
\end{cor}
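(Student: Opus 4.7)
\emph{The plan.} The strategy is to place $n$ pairwise disjoint width-$w$ tubular neighborhoods about the $S_k$, invoke Proposition~\ref{prop:vol-general} to compute each tube's volume, and then sum the contributions inside $X$.

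Set $A_k=\mathrm{Area}(S_k)$, let $A=\max_k A_k$, and take $w=\tfrac18\log(\tfrac{2}{A}+1)$, chosen so that $w/2=\tfrac{1}{16}\log(\tfrac{2}{A}+1)$ matches the cosh argument in the target bound. For any pair $k\neq j$, Theorem~\ref{thm:2-embedded} supplies disjoint tubular neighborhoods of $S_k$ and $S_j$ of common width $\tfrac18\log\bigl(\tfrac{2}{\max\{A_k,A_j\}}+1\bigr)$; since $\max\{A_k,A_j\}\le A$, this width is at least $w$, so the shrunk width-$w$ neighborhoods $\mathcal{N}_k=\mathcal{N}_w(S_k)$ remain disjoint. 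Running through all $\binom{n}{2}$ pairs shows that $\mathcal{N}_1,\dots,\mathcal{N}_n$ embed pairwise disjointly in $X$.

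By Proposition~\ref{prop:vol-general} applied with $R=S_k$ and $\epsilon=w$,
$$\mathrm{vol}(\mathcal{N}_k)=2\pi\Bigl[\cosh^4\tfrac{w}{2}-1\Bigr]A_k,$$
and summing the disjoint contributions yields
$$\mathrm{vol}(X)\ \ge\ \sum_{k=1}^n\mathrm{vol}(\mathcal{N}_k)\ =\ 2\pi\Bigl[\cosh^4\bigl(\tfrac{1}{16}\log(\tfrac{2}{A}+1)\bigr)-1\Bigr]\sum_{k=1}^n A_k.$$

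The remaining step is to replace $\sum_k A_k$ by $nA$, and this I expect to be the main obstacle. With $A=\max_k A_k$ one in fact has $\sum_k A_k\le nA$, with equality exactly when every $A_k$ equals~$A$; hence the summation argument delivers the corollary as stated precisely in the equal-area regime (for instance, when the $S_k$ are congruent translates of a single surface), while in general it produces the inequality with $\sum_k A_k$ in place of $nA$. Closing the gap for non-isometric families would require a more refined construction—perhaps assigning individually optimized widths $w_k=c(A_k)=\tfrac14\log(\tfrac{2}{A_k}+1)$ from Theorem~\ref{thm:tubular} (note $w_k\ge w$ since $A_k\le A$) together with a pairwise-disjointness check that exploits the extra room available around the smaller surfaces—so as to bound each $\mathrm{vol}(\mathcal{N}_k)$ below by $2\pi[\cosh^4(\tfrac{1}{16}\log(\tfrac{2}{A}+1))-1]\,A$ rather than $\cdot A_k$ before summing.
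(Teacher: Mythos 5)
Your argument is the same as the paper's: apply Theorem \ref{thm:2-embedded} pairwise to obtain disjoint tubes of the common width $\epsilon=\frac18\log(\frac{2}{A}+1)$ about all the $S_k$, then compute each tube's volume from Proposition \ref{prop:vol-general} and add. The difficulty you flag at the end is genuine, and it sits in the paper's own proof rather than in your reasoning: the paper asserts ${\rm vol}(X)\ge n\,{\rm vol}(\mathcal{N}_{\epsilon}(S))$ with $S$ the surface of \emph{largest} area, but since ${\rm vol}(\mathcal{N}_{\epsilon}(S_k))=2\pi\left[\cosh^4\frac{\epsilon}{2}-1\right]A_k$ is proportional to $A_k$, the sum of the $n$ disjoint tube volumes is $2\pi\left[\cosh^4\frac{\epsilon}{2}-1\right]\sum_k A_k\le 2\pi\left[\cosh^4\frac{\epsilon}{2}-1\right]nA$; the replacement of $\sum_k A_k$ by $nA$ goes the wrong way. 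What the argument actually establishes is (\ref{cor: lower volume bound}) with $nA$ replaced by $\sum_k{\rm Area}(S_k)$ (equivalently, with the area factor taken to be $n\min_k A_k$ while the width is still governed by $\max_k A_k$); the printed statement is recovered exactly when all the areas coincide, as you observe.

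One caveat about your proposed repair: assigning each $S_k$ its individually optimal width $w_k=\frac14\log(\frac{2}{A_k}+1)$ cannot close the gap, because even at that width ${\rm vol}(\mathcal{N}_{w_k}(S_k))=2\pi\left[\cosh^4\frac{w_k}{2}-1\right]A_k\sim\frac{\pi}{8}\sqrt{2A_k}\to 0$ as $A_k\to 0$, so the volume contributed by a very small surface cannot be bounded below by any positive quantity depending only on $A=\max_k A_k$. Either the corollary should be restated with $\sum_k{\rm Area}(S_k)$ (or $n\min_k{\rm Area}(S_k)$) in place of $nA$, or a genuinely different mechanism is required. Your disjointness argument via pairwise application of Theorem \ref{thm:2-embedded}, and the volume computation via Proposition \ref{prop:vol-general}, are otherwise correct and coincide with the paper's.
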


\begin{proof}
Since the smallest of the tubular widths about  $S_1,  \dots ,S_n$
occurs for the largest area surface,  we can apply Theorem \ref{thm:2-embedded} to guarantee  that
$S_1, S_2, \dots ,S_n$ have   disjoint embedded tubular neighborhoods of width $
 \epsilon =\frac{1}{8}
 \log \left(\frac{2}{A}+1\right)$.  Hence
$$
{\rm vol}(X) \geq n\,{\rm vol}\left(\mathcal{N}_{\epsilon}(S)\right)
$$
where $S$ is the surface of (largest) area $A$.
Finally using the volume formula,  equation (\ref{eq: tubular nbhd}),  with
$\epsilon=\frac{1}{8}\log\left( \frac{2}{A}+1 \right)$
yields inequality  (\ref{cor: lower volume bound}).
\end{proof}

\section{The Eigenvalue problem} \label{sec: eigenvalue}

We use \cite{Buser-book, Cha, Cha2} as  basic references for this section. Our tubular neighborhood theorem can be used to give  geometric bounds on the first eigenvalue of the Laplacian.
We illustrate this principle   by giving  a bound on the first non-zero eigenvalue of the closed eigenvalue problem.

Let $X$ be a closed Riemannian manifold. The goal of the closed eigenvalue problem is to find  real numbers $\lambda$ for which there is a non-trivial
$C^{2}$ solution to $\Delta \phi + \lambda \phi =0$. The set of eigenvalues
are nonnegative, discrete and go to infinity.
In what follows $\lambda(X)$ denotes the first non-zero eigenvalue of the closed eigenvalue problem on the complex hyperbolic manifold $X$.
If $S \hookrightarrow X$ is an embedded complex geodesic surface of Euler characteristic $\chi$,  denote its  tubular neighborhood of width
$c(|\chi|)$ by
$\mathcal{N}_{c(|\chi|)}$.

\begin{thm} \label{thm: eigenvalue upper bound}
Suppose we have an embedded  compact complex geodesic  surface
$S \hookrightarrow X$, where $X$ is a closed  complex hyperbolic 2-manifold. Then
\begin{equation} \label{eq: upper bound}
\lambda (X) \leq \frac{{\rm vol}(\mathcal{N}_{c(|\chi|)})}{(c(|\chi |))^{2}[{\rm vol}(X)-{\rm vol}(\mathcal{N}_{c(|\chi|)})]}.
\end{equation}

\end{thm}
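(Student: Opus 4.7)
The plan is to apply the Rayleigh--Ritz characterization of the first non-zero eigenvalue,
\[
\lambda(X) \;=\; \inf\!\left\{\, \frac{\int_X |\nabla f|^{2}\,d\mathrm{vol}}{\int_X f^{2}\,d\mathrm{vol}} \;:\; f\in H^{1}(X)\setminus\{0\},\ \int_X f = 0\, \right\},
\]
and to exhibit an explicit test function built from the embedded tube $\mathcal{N} := \mathcal{N}_{c(|\chi|)}$ supplied by Theorem~\ref{thm:tubular}. Set $c := c(|\chi|)$. Because the tube is embedded, the Riemannian distance $d(\,\cdot\,,S)$ is smooth on $\mathcal{N}\setminus S$ with $|\nabla d|\equiv 1$, and $d(x,S)\geq c$ for every $x\in X\setminus\mathcal{N}$. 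The natural candidate is the rescaled truncated distance
\[
g(x) \;=\; \min\!\Bigl(\,\tfrac{1}{c}\,d(x,S),\ 1\,\Bigr),
\]
which vanishes on $S$, rises linearly with Lipschitz constant $1/c$ inside the tube, and is identically~$1$ on the complement $X\setminus\mathcal{N}$.

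The Dirichlet energy is controlled immediately. Since $|\nabla g|\leq 1/c$ on $\mathcal{N}$ and $\nabla g \equiv 0$ on $X\setminus\mathcal{N}$,
\[
\int_X |\nabla g|^{2} \;\leq\; \frac{\mathrm{vol}(\mathcal{N})}{c^{2}}.
\]
To produce an admissible test function orthogonal to constants I would pass to $\tilde g = g - \bar g$, where $\bar g = \mathrm{vol}(X)^{-1}\int_X g$; this preserves the Dirichlet energy and sets $\int_X \tilde g = 0$. The crux of the argument is then to bound the variance
\[
\int_X \tilde g^{\,2} \;=\; \int_X g^{2} \;-\; \mathrm{vol}(X)\,\bar g^{\,2}
\]
from below by the quantity $\mathrm{vol}(X)-\mathrm{vol}(\mathcal{N})$ appearing in the denominator of the claimed inequality.

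For the variance estimate I would split each of $\int_X g$ and $\int_X g^{2}$ along the decomposition $X = \mathcal{N}\sqcup(X\setminus\mathcal{N})$, use that $g\equiv 1$ on $X\setminus\mathcal{N}$ (so both integrals pick up a clean contribution of $\mathrm{vol}(X)-\mathrm{vol}(\mathcal{N})$), and apply the Cauchy--Schwarz inequality $\bigl(\int_{\mathcal{N}} g\bigr)^{2} \leq \mathrm{vol}(\mathcal{N})\int_{\mathcal{N}} g^{2}$ to control the tube contribution to $\bar g$. A short algebraic rearrangement then isolates the factor $\mathrm{vol}(X)-\mathrm{vol}(\mathcal{N})$ in the lower bound; inserting this together with the Dirichlet bound into the Rayleigh quotient yields the stated inequality.

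The main obstacle I anticipate is bookkeeping rather than conceptual: the tube volume $\mathrm{vol}(\mathcal{N})$ enters the estimate in two competing places (it bounds the Dirichlet energy above via $\mathrm{vol}(\mathcal{N})/c^{2}$ and simultaneously deflates the variance since $g$ is nonconstant only on a set of measure $\mathrm{vol}(\mathcal{N})$), and one must track the cancellations carefully to produce exactly the ratio $\frac{\mathrm{vol}(\mathcal{N})}{c^{2}\bigl[\mathrm{vol}(X)-\mathrm{vol}(\mathcal{N})\bigr]}$ rather than a cruder variant. If the direct Cauchy--Schwarz step proves too lossy, one can refine by either renormalizing $g$ so that it is centered on the tube (e.g.\ subtracting a constant chosen so the tube integrals become orthogonal to the complement) or by using the explicit tube volume formula from Proposition~\ref{prop:vol-general} to evaluate the mean and $L^{2}$-mass exactly in Fermi coordinates.
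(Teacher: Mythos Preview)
Your test function and overall strategy match the paper's exactly (your $g$ is the paper's $f$ divided by $c$). The paper, however, does \emph{not} subtract the mean: it simply invokes the Rayleigh inequality in the form $\lambda(X)\le \int_X|\nabla f|^2\big/\int_X f^2$, bounds the numerator by $\mathrm{vol}(\mathcal{N})$ (since $|\nabla f|=1$ on the tube and $0$ elsewhere), and bounds the denominator from below by discarding the non-negative tube contribution, $\int_X f^2\ge \int_{X\setminus\mathcal{N}} c^2 = c^2\bigl[\mathrm{vol}(X)-\mathrm{vol}(\mathcal{N})\bigr]$. That is the entire argument.

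Your more careful route---centering $g$ and lower-bounding its variance by $\mathrm{vol}(X)-\mathrm{vol}(\mathcal{N})$---cannot produce the stated inequality. Since $g\equiv 1$ off the tube and $0\le g\le 1$ on it, one has
\[
\mathrm{Var}(g)\;\le\;\int_X(g-1)^2 \;=\;\int_{\mathcal{N}}(g-1)^2\;\le\;\mathrm{vol}(\mathcal{N});
\]
so whenever the tube occupies less than half the total volume, $\mathrm{Var}(g)\le \mathrm{vol}(\mathcal{N}) < \mathrm{vol}(X)-\mathrm{vol}(\mathcal{N})$, the reverse of what you need. No Cauchy--Schwarz rearrangement or exact Fermi-coordinate computation will overturn this, because it is a hard upper bound on the variance. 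The paper's cleaner bound comes precisely from using $\int_X f^2$ rather than the variance in the denominator---that is, from not enforcing the mean-zero constraint in the Rayleigh characterization of $\lambda_1$.
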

\begin{proof}  The tubular neighborhood theorem (Theorem \ref{thm:tubular})  guarantees that $S$ has a tubular neighborhood
 $\mathcal{N}_{c(|\chi|)}$ of width $c(|\chi |)$ where $\chi$ is the Euler characteristic  of $S$.
 Rayleigh's theorem gives a variational characterization of the first eigenvalue.  In particular,
\begin{equation}\label{eq: Rayleigh}
\lambda (X) \leq \frac{\int_{X} |\text{grad}(f)|^{2}\, {\rm dvol}}{\int_{X} f^{2} \,\rm{dvol}}
\end{equation}
where $f$ is  a  test function in the space of (Sobolev) metric completion of the smooth functions on $X$ (see \cite{Cha2} for more background and details). Define

\begin{equation}
f(x)=
\begin{cases}
      d(x,S), & \text{if } x \in \mathcal{N}_{c(|\chi|)}\\
       c(|\chi |), & \text{if } x  \notin  \mathcal{N}_{c(|\chi|)}
    \end{cases}
\end{equation}

For the numerator of the Rayleigh  quotient  (\ref{eq: Rayleigh}), note that $|\text{grad}(f)|=1$ on $\mathcal{N}_{c(|\chi|)}$,  and is $0$ off
$\mathcal{N}_{c(|\chi|)}$.
Hence
\begin{equation}\label{eq: gradient}
\int_{X} |\text{grad}(f)|^{2}\, {\rm dvol} ={\rm vol}(\mathcal{N}_{c(|\chi|)}).
\end{equation}
On the other hand, the denominator of (\ref{eq: Rayleigh}) has lower bound given by
\begin{equation}\label{eq: norm lower bound}
\int_{X} f^{2} \, {\rm dvol} \geq \int_{X-\mathcal{N}_{c(|\chi|)}}  (c(|\chi |))^{2} \, {\rm dvol}
=(c(|\chi |))^{2}[{\rm vol}(X)-{\rm vol}(\mathcal{N}_{c(|\chi|)})].
\end{equation}
Putting  (\ref{eq: gradient})  and  (\ref{eq: norm lower bound})  into  the
Rayleigh quotient
yields the desired upper bound on $\lambda(X)$.
\end{proof}

Using Theorem \ref{thm: eigenvalue upper bound}  we obtain an upper bound on $\lambda (X)$ in terms of the
Euler characteristic of $S$ and the volume of $X$.

\begin{cor} \label{cor: explicit estimate}
Same hypothesis as Theorem
\ref{thm: eigenvalue upper bound}. Then
\begin{displaymath}
\lambda (X) \leq
 \frac{64{\pi}^{2}|\chi |\left[ \cosh^4 \left(\frac{1}{8}\log\left( \frac{1}{\pi |\chi |}+1\right)\right)  -1
\right]}{\left(\log\left( \frac{1}{\pi |\chi |}+1 \right)\right)^{2}
\left({\rm vol}(X)-4{\pi}^{2}|\chi |
\left[ \cosh^4 \left(\frac{1}{8}\log\left( \frac{1}{\pi |\chi |}+1\right)\right)  -1
\right]\right)}.
\end{displaymath}
\end{cor}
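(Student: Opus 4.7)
The plan is to apply Theorem \ref{thm: eigenvalue upper bound} directly, substituting explicit expressions for $c(|\chi|)$ and ${\rm vol}(\mathcal{N}_{c(|\chi|)})$ in terms of $|\chi|$. There is essentially no new geometric content beyond what is already established; the work is just to convert the abstract bound involving the tubular neighborhood into a closed-form expression that depends only on $|\chi|$ and ${\rm vol}(X)$.

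First, I would invoke equation (\ref{eqn:width-bound-thm}) from Theorem A to replace $c(|\chi|)$ by $\tfrac{1}{4}\log\left(\tfrac{1}{\pi|\chi|}+1\right)$. Squaring this gives the factor $\tfrac{1}{16}\left(\log\left(\tfrac{1}{\pi|\chi|}+1\right)\right)^{2}$ in the denominator of the Rayleigh bound; clearing the $\tfrac{1}{16}$ produces the prefactor $64\pi^{2}$ that appears in the statement after multiplying through by $4\pi^{2}|\chi|$ (whose origin I explain next).

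The key auxiliary step is to rewrite ${\rm vol}(\mathcal{N}_{c(|\chi|)})$ in terms of $|\chi|$. By Proposition \ref{prop:vol-general} applied with $R = S$ and $\epsilon = c(|\chi|)$, we have
\begin{equation*}
{\rm vol}(\mathcal{N}_{c(|\chi|)}) \;=\; 2\pi\left[\cosh^{4}\!\left(\tfrac{c(|\chi|)}{2}\right) - 1\right]{\rm Area}(S).
\end{equation*}
Because $S$ is a complex geodesic surface it carries a hyperbolic metric of constant curvature $-1$ (as recalled in the introduction), so Gauss--Bonnet gives ${\rm Area}(S) = 2\pi|\chi|$. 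Substituting this and using $c(|\chi|)/2 = \tfrac{1}{8}\log\left(\tfrac{1}{\pi|\chi|}+1\right)$ yields
\begin{equation*}
{\rm vol}(\mathcal{N}_{c(|\chi|)}) \;=\; 4\pi^{2}|\chi|\left[\cosh^{4}\!\left(\tfrac{1}{8}\log\left(\tfrac{1}{\pi|\chi|}+1\right)\right) - 1\right],
\end{equation*}
which is exactly the quantity appearing in the numerator, and (with opposite sign) inside the difference in the denominator of the stated inequality.

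Plugging both expressions into inequality (\ref{eq: upper bound}) of Theorem \ref{thm: eigenvalue upper bound} and simplifying the resulting $\tfrac{1}{16}$ into the constant $64\pi^{2}$ gives the claimed estimate. There is no real obstacle here beyond bookkeeping; the only place one has to be careful is ensuring that ${\rm vol}(X) - {\rm vol}(\mathcal{N}_{c(|\chi|)}) > 0$, which is automatic since $\mathcal{N}_{c(|\chi|)}$ is an embedded open subset of the closed manifold $X$, so the Rayleigh-quotient argument in Theorem \ref{thm: eigenvalue upper bound} applies and the denominator in the bound is positive.
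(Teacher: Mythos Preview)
Your proposal is correct and follows essentially the same approach as the paper: substitute the explicit width $c(|\chi|)=\tfrac{1}{4}\log\left(\tfrac{1}{\pi|\chi|}+1\right)$ from (\ref{eqn:width-bound-thm}) and the tubular volume formula (\ref{eq: tubular nbhd}) with ${\rm Area}(S)=2\pi|\chi|$ into the bound (\ref{eq: upper bound}), then clear the $\tfrac{1}{16}$. Your added remarks (the Gauss--Bonnet identification and the positivity of the denominator) are welcome details the paper leaves implicit.
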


\begin{proof}
If we substitute  formula (\ref{eqn:width-bound}) for the value of
$c(|\chi |)$, and substitute formula (\ref{eq: tubular nbhd})  with
$\epsilon=c(|\chi |)$,  for the value of
${\rm vol}(\mathcal{N}_{c(|\chi|)})$ then  the upper bound
in (\ref{eq: upper bound})  becomes

\begin{equation} \label{eq: upper bound2}
\frac{4{\pi}^{2}|\chi |\left[ \cosh^4 \left(\frac{1}{8}\log\left( \frac{1}{\pi |\chi |}+1\right)\right)  -1
\right]}{\left( \frac{1}{4}\log\left( \frac{1}{\pi |\chi |}+1 \right)\right)^{2}
\left({\rm vol}(X)-4{\pi}^{2}|\chi |
\left[ \cosh^4 \left(\frac{1}{8}\log\left( \frac{1}{\pi |\chi |}+1\right)\right)  -1
\right]\right)}.
\end{equation}

\end{proof}

\section{Examples}\label{sec: examples}

This section is devoted to examples.
In  \ref{subsec: no collar function}  and \ref{subsec: no collar function in cx hyp} we show that  in the setting of simple closed geodesics in hyperbolic 3-manifolds or in  complex hyperbolic manifolds  there is no positive  function $F(x)$ for which any simple closed geodesic of length $\ell$ has a tubular neighborhood of width
$F(\ell)$.
In \ref{subsec: examples of cx surfaces} we mention some references to the existence of complex geodesic surfaces in complex hyperbolic manifolds.

\subsection{Example (No collar  function for geodesics in a $3$-manifold)}
\label{subsec: no collar function}
The Margulis lemma guarantees a tube of a certain size if the  simple closed geodesic is short \cite{Brooks-M}. In this set of examples we show that  if we relax the shortness condition, the statement fails in a  strong  way. Following  \cite{Bas-Wolpert},  a Riemannian manifold is said to have the  {\it spd-property} if all of its primitive closed geodesics are
simple and pairwise disjoint.

We start with a compact topological $3$-manifold $M$ that is either a handlebody
$V_{g}$, ($g \geq 2$) or $\Sigma  \times \mathbb{R}$, where
$\Sigma=\Sigma_{g,n}$  is  a surface with negative Euler characteristic.
For $\rho$  a marked  hyperbolic structure on $M$,  we denote $M$ with this metric by $M_{\rho}$,  and the $\rho$-length  of a geodesic $\gamma$ in
$M_{\rho}$ by $\ell_{\rho}(\gamma)$.

Fix homotopy classes  of non-simple closed curves $\{\gamma_{1}, \cdots ,\gamma_{m}\}$  on one of the boundary components of $M$, and fix a marked Fuchsian hyperbolic structure $\rho$ on $M$.

\begin{prop}\label{prop: no tube theorem}
Given $\epsilon >0$, there exists a complete hyperbolic metric
$\rho_{\epsilon}$ on $M$ where
\begin{itemize}
\item $(M, \rho_{\epsilon})$  has the $\text{spd}$-property.  In particular, the geodesic in the homotopy class of $\gamma_{i}$ is simple
\item
 $\ell_{\rho}(\gamma_{i})-\epsilon \leq \ell_{\rho_{\epsilon}}(\gamma_{i}) \leq \, \ell_{\rho}(\gamma_{i}) +\epsilon$, for $i=1,...,m$
\item $\ell_{\rho_{\epsilon}} (\delta_{ij}) < \epsilon$, where $\delta_{ij}$ is the shortest orthogeodesic from $\gamma_{i}$ to $\gamma_{j}$.
\end{itemize}
\end{prop}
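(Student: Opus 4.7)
The strategy I would pursue is to construct $\rho_{\epsilon}$ by a geometrically explicit deformation of the initial Fuchsian structure $\rho$ inside the deformation space of complete hyperbolic structures on $M$ (quasi-Fuchsian space when $M = \Sigma \times \mathbb{R}$, and the analogous Schottky-type deformation space when $M = V_g$). In $\rho$ the boundary surface carrying the $\gamma_i$ is totally geodesic, so each geodesic representative of $\gamma_i$ lies on this $2$-dimensional surface and inherits the non-simplicity of $\gamma_i$ as a surface curve. The point of the deformation is to push these representatives off the totally geodesic surface into genuinely $3$-dimensional curves, at which point the extra dimension allows us to eliminate self-intersections and pairwise intersections by a small generic perturbation.

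Concretely, I would proceed in two stages. First, I would apply a small bending/quasi-Fuchsian deformation $\rho \to \rho_{1}$ to produce a convex cocompact structure in which every primitive closed geodesic is simple and pairwise disjoint from the others (the spd property). This uses two ingredients: the continuous (in fact real-analytic) dependence of length and configuration data on the deformation parameter, which gives conclusion~(2) once the deformation is taken close enough to $\rho$; and the genericity in $3$ dimensions of simple, pairwise disjoint primitive geodesics, which gives conclusion~(1) after an arbitrarily small further perturbation.

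Second, I would superimpose a pinching deformation $\rho_{1} \to \rho_{\epsilon}$ along an auxiliary simple closed multicurve $\eta$ on the boundary surface that is essentially disjoint from the $\gamma_{i}$ but lies near the points where the $\gamma_{i}$'s representatives meet or nearly meet. As $\eta$ is pinched, a thin neck forms that forces the geodesic representatives of the $\gamma_{i}$ into very close configurations, and the orthogeodesic lengths $\ell_{\rho_{\epsilon}}(\delta_{ij})$ can be driven below $\epsilon$. Because $\eta$ is disjoint from each $\gamma_{i}$, the lengths $\ell_{\rho_{\epsilon}}(\gamma_{i})$ vary only through a controlled continuous function of the pinching parameter, so condition~(2) can still be arranged; and because the spd property is open near $\rho_{1}$, taking a generic pinching that is deep enough to shrink the $\delta_{ij}$ but accompanied by a small compensating bending retains condition~(1).

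The main obstacle is balancing the three conditions at once: the pinching that achieves~(3) brings the geodesic representatives close together, which a priori threatens to create new intersections and destroy the spd property. The resolution is exactly the extra ambient dimension — in a $3$-manifold, curves can be brought arbitrarily close without actually meeting, by a suitable twisting of the pinching across the neck — whereas in $2$ dimensions this would be impossible. Making this balance precise is the technical heart of the argument, and would likely proceed by adapting the bending/Dehn-twist constructions from the Basmajian--Wolpert work cited in the paper, using continuity of length functions and the transverse (in the deformation space) character of the loci one must avoid.
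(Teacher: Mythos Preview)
Your two-stage plan overcomplicates matters, and the second stage is both unnecessary and risky. The key observation you are missing is that at the initial Fuchsian representation $\rho$, the geodesic representatives of the $\gamma_i$ lie on the totally geodesic boundary surface and are \emph{non-simple} there, so they self-intersect; with the convention that intersecting geodesics have orthogeodesic length zero, one has $\ell_{\rho}(\delta_{ij})=0$. Since orthogeodesic length and curve length are continuous on the deformation space (Schottky or quasi-Fuchsian), there is an open neighborhood $U$ of $\rho$ on which conditions~(2) and~(3) hold simultaneously. The Basmajian--Wolpert density result then furnishes a representation $\rho_\epsilon\in U$ with the spd-property, giving~(1). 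That is the entire argument in the paper: a single small perturbation, no pinching.

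Your first stage is essentially this, but you stop short of claiming~(3) there, apparently believing the geodesics might separate under the perturbation and need to be forced back together. They do not: continuity keeps them $\epsilon$-close. The pinching stage you propose then creates exactly the balancing problem you identify --- pinching along $\eta$ moves you far from $\rho$ in the deformation space, and even if $\eta$ is disjoint from the $\gamma_i$, the lengths $\ell(\gamma_i)$ will in general change substantially (length functions on Teichm\"uller/quasi-Fuchsian space are not local in this sense), threatening~(2). So the difficulty you worry about is self-inflicted; drop stage two and observe that stage one already delivers all three conclusions.
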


\begin{proof}
We outline the proof. View the metric $\rho$ as a discrete faithful Fuchsian representation of  Schottky space
 if $M$ is a handlebody, and of   quasifuchsian space if $M$ is $\Sigma  \times \mathbb{R}$.
We use the convention here that if a closed geodesic self-intersects itself  then we set the length of the orthogeodesic to be zero.
The length function (of a curve or orthogeodesic)  is a continuous function on either of these representation spaces. Since the
$\{\gamma_{i}\}_{i=1}^{m}$ are non-simple  closed
geodesics on $M_{\rho}$ and hence $\ell_{\rho}(\delta_{ij})=0$,
we can find a connected open set  $U$
containing $\rho$ so that  the lengths
$\ell_{\rho^{\prime}} (\delta_{ij}) < \epsilon$ for any
$\rho^{\prime} \in U$.   Moreover, by choosing $U$ small enough, the continuity guarantees that
\begin{equation}
\ell_{\rho}(\gamma_{i})-\epsilon \leq \ell_{\rho_{\epsilon}}(\gamma_{i}) \leq \, \ell_{\rho}(\gamma_{i}) +\epsilon  \text{ for } i=1,...,m.
\end{equation}
Finally, it follows from  \cite{Bas-Wolpert} that
there is a dense subset of representations in the open set $U$ that have the spd-property.
In particular, a  representation $\rho_{\epsilon}$ exists in $U$  satisfying
the  three items.
\end{proof}

\subsection{Example (No collar  function for geodesics in a complex hyperbolic $2$-manifold)}\label{subsec: no collar function in cx hyp}
We outline a Schottky construction of an  elementary example that shows that two simple closed geodesics of  fixed lengths  in a complex hyperbolic $2$-manifold   can be  arbitrarily close to each other.  Hence there is no width that guarantees a tubular neighborhood.

We work in the ball model
\begin{equation}
\mathbb{H}^2_{\mathbb{C}}  =  \{ (z_1, z_2)\in\mathbb{C}^2 \mid |z_1|^2 + |z_2|^2 < 1 \}.
\end{equation}

Consider the oriented complete geodesic $\ell_1$ with endpoints at infinity
going from $(-1,0)$ to $(1,0)$.   Also consider a second oriented complete geodesic $\ell_{2}$ with endpoints going from $(0,-1)$ to $(0,1)$.
Note that $\ell_1$ and $\ell_2$ intersect at the origin.
Choose neighborhoods $U_k \subset \partial \mathbb{H}^2_{\mathbb{C}}$ and $V_k \subset \partial \mathbb{H}^2_{\mathbb{C}}$ about the endpoints of $\ell_k$, for $k=1,2$.
Construct  a hyperbolic element $\gamma_k$ whose axis is $\ell_k$, for  $k=1,2$ having  translation length long enough so that
 $\gamma_k(\text{ext}(U_k))\subset V_{k}$ for $k=1,2$.
Now the group $<\gamma_1, \gamma_2>$ acts discretely on
 $\mathbb{H}^2_{\mathbb{C}}$ and is isomorphic to a  free group on two generators. Now by moving the axis of $\gamma_2$ slightly off of
 the axis of $\gamma_1$ but not changing translation lengths we still have a  discrete subgroup isomorphic to a free product. The quotient
 $\mathbb{H}^2_{\mathbb{C}}/<\gamma_1, \gamma_2>$  has two simple closed geodesics that are disjoint that  can be made arbitrarily close to each other. Hence neither closed geodesic has a tubular neighborhood.


\subsection{Examples of complex geodesic surfaces in a complex hyperbolic $2$-manifold}\label{subsec: examples of cx surfaces}

For the existence of infinitely many   lattices  which contain an embedded subgroup which keeps invariant a
complex geodesic see \cite{Borel-Harish} or \cite{Chinburg-Stover}.
These lattices are in fact arithmetic of simple type.
For example, ${\rm PU}(n,1, \mathcal{O}_{d}) < {\rm PU}(n,1)$, where
$ \mathcal{O}_{d}$ is the ring of integers of the imaginary quadratic field
$\mathbb{Q}(\sqrt{-d})$.


\section{A  Combination Theorem and Bounds on the  Tube Function}
\label{combination thm and tube bounds}
The goal of this section is to give an asymptotic estimate for the optimal tube (collar) function of an embedded complex geodesic surface.
 Along the way, as a tool we prove a geometric version of the combination theorem which may be of independent interest. Much of this section follows the same approach of sections  5 and 6   in \cite{B-tubular} for finding the optimal collar function in the setting of totally geodesic hypersurfaces in  real hyperbolic manifolds. The two  significant  differences  are  that  bisectors in complex hyperbolic space are not totally geodesic and we are not in real codimension one. Fortunately the differential geometry of bisectors  in complex hyperbolic space has been carefully studied. We refer to Goldman  \cite {Goldman}  for the relevant properties.  For the basics on combination theorems we refer to Maskit \cite{Maskit}.

\subsection{A geometric combination theorem}\label{subsec: comb thm}

A pair $(\Gamma_{1}, L_{1})$ is said to be a $\mathbb{C}$-Fuchsian group, if $\Gamma_{1} < {\rm PU}(2,1)$ is discrete and keeps invariant the complex line $L_{1}$.
For a $\mathbb{C}$-Fuchsian group   $(\Gamma_{1}, L_{1})$, and  $z \in L_1$,  we define  ${\rm inj}(z_1)$ to be the {\it injectivity radius}  of the projection of  $z_{1}$ in the surface  $L_{1}/\Gamma_{1}$.

\begin{figure}[htb]
 \begin{center}
     \includegraphics[width=7cm]{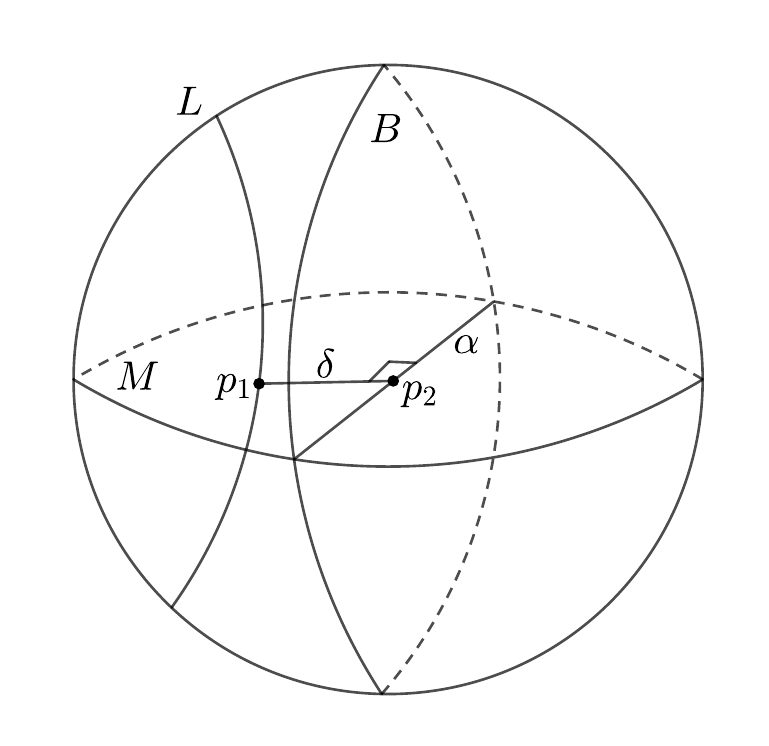}
    \caption{A complex line $L$ and a bisector $B$}
    \label{fig:bisector-proj}
  \end{center}
\end{figure}

\begin{lem} \label{lem: bisector properties}
 Let $L$ be a complex line, $\alpha$ a geodesic with $\rho(\alpha, L) = d>0$, and $B$ a bisector whose spine is $\alpha$.
Suppose the unique complex line $M$ containing $\alpha$ intersects $L$ transversally on a point. Then
\begin{enumerate}
\item   $\Pi_{L}(B)$ is a disc of radius $s(d)$.
\item  $\gamma\Pi_{L}(B) \cap \Pi_{L}(B)=\emptyset$ implies $\gamma(B) \cap B=\emptyset$, for $\gamma \in {\rm Stab}(L)$
\item The boundary of a bisector $B$ in ${\partial}\mathbb{H}_{\mathbb{C}}^2$  bounds
two open connected sets.
\end{enumerate}
\end{lem}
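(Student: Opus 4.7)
I would address the three parts in order of increasing difficulty.

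\textbf{Part (2)} follows immediately from equivariance: by Proposition~\ref{prop:commute}, $\Pi_L\circ\gamma=\gamma\circ\Pi_L$ for all $\gamma\in{\rm Stab}(L)$. If $\gamma(B)\cap B\ne\emptyset$, pick $q\in B$ with $\gamma(q)\in B$; then $\Pi_L(q)\in\Pi_L(B)$ and $\gamma(\Pi_L(q))=\Pi_L(\gamma(q))\in\Pi_L(B)$, so $\gamma(\Pi_L(B))\cap\Pi_L(B)\ne\emptyset$. The contrapositive is (2).

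\textbf{Part (3)} is topological. The boundary at infinity $\partial B\subset\partial\hyp$ is the \emph{spinal sphere}, a topologically embedded $2$-sphere in $\partial\hyp\cong S^{3}$ (a standard fact in complex hyperbolic geometry; see Goldman~\cite{Goldman}). The Jordan--Brouwer separation theorem then gives exactly two open connected components of $\partial\hyp\setminus\partial B$, both bounded by $\partial B$.

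\textbf{Part (1)} is the computational heart. I would normalize using ${\rm PU}(2,1)$ so that $L=\{(0,w)\}$ has polar $(1,0,0)$, $L\cap M=\vo$, and $M$ has polar $(\cos\theta,\sin\theta,0)$ with $\theta$ the intersection angle; parametrize $M=\{(u\sin\theta,-u\cos\theta):|u|<1\}$, which inherits the Poincar\'e metric in $u$. Then $\Pi_L(z_1,z_2)=(0,z_2)$, and an elementary computation gives $\Pi_M(z_1,z_2)$ corresponding to $u=\tau:=z_1\sin\theta-z_2\cos\theta$. The distance from a point of $M$ with parameter $u$ to $L$ depends only on $|u|$, and equating it to $d$ yields $u_0^{2}=\sinh^{2}(d/2)/(\cosh^{2}(d/2)-\cos^{2}\theta)$ for the closest point on $\alpha$ to $L$. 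Using a joint rotation $u\mapsto e^{i\phi}u,\,w\mapsto e^{i\phi}w$ (an element of ${\rm Stab}(L)\cap{\rm Stab}(M)$), one normalizes $u_0$ to be real positive, so $\alpha$ is the Poincar\'e geodesic in $M$ perpendicular to the real diameter at $u_0$. Since $B=\Pi_M^{-1}(\alpha)$, the projection criterion reduces to
\[
(0,z_2)\in\Pi_L(B) \iff \mathrm{dist}(-z_2\cos\theta,\alpha) < \sin\theta\sqrt{1-|z_2|^{2}}
\]
(Euclidean distance in the $\tau$-plane), and analyzing this inequality identifies $\Pi_L(B)$ as the hyperbolic disc in $L$ of radius $s(d)$ centred at $\Pi_L(p^{*})=(0,-u_0\cos\theta)$, where $p^{*}\in\alpha$ is the closest point to $L$.

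The main obstacle I anticipate is Part (1) for oblique $\theta$. The orthogonal case $\theta=\pi/2$ is transparent: $\alpha$ lies along the $z_{1}$-axis, the hyperbolic centre falls at $\vo$, and one reads off $|z_{2}|<\sech(d/2)$, which is precisely a hyperbolic disc of radius $s(d)$. For $\theta\in(0,\pi/2)$ the intermediate formulas are $\theta$-dependent, and the subtlety is to verify that the identity $u_0^{2}(\cosh^{2}(d/2)-\cos^{2}\theta)=\sinh^{2}(d/2)$ conspires to make the final radius remain $s(d)$, independently of $\theta$.
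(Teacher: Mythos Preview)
Your arguments for Parts~(2) and~(3) are correct. For~(2), your equivariance argument is in fact cleaner than the paper's: the paper invokes the identity $\Pi_L(B)=\Pi_L(\Pi_M^{-1}(p_2))$ established in Part~(1) together with Lemma~\ref{lem:general-no-hol}, whereas your contrapositive uses only Proposition~\ref{prop:commute} and works for any subset of $\hyp$ in place of $B$. For~(3) you and the paper both appeal to Jordan--Brouwer separation for the spinal sphere.

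For Part~(1) you have missed the paper's key simplification. The paper does not compute in coordinates. Instead it uses the slice decomposition $B=\bigcup_{q\in\alpha}\Pi_M^{-1}(q)$ and, normalizing so that $L=\Pi_M^{-1}(p_1)$ (that is, $L\perp M$), observes that each slice $\Pi_M^{-1}(q)$ is a complex line at distance $\rho(p_1,q)\ge\rho(p_1,p_2)=d$ from $L$. By Proposition~\ref{prop:s} its projection $\Pi_L(\Pi_M^{-1}(q))$ is a disc in $L$ of radius $s(\rho(p_1,q))\le s(d)$ centred at $p_1$; the discs are therefore concentric and nested, and their union is exactly the disc of radius $s(d)$, attained at $q=p_2$. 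This replaces your entire coordinate computation.

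Note also that although the lemma says ``transversally'', the paper's proof tacitly uses $L\perp M$ (the identification $L=\Pi_M^{-1}(p_1)$ makes this explicit), and this orthogonal case is all that is needed in Theorem~\ref{thm: combination theorem}. Your attempt at oblique $\theta$ therefore attacks a harder problem than required; for $\theta\ne\pi/2$ the projected slices have centres $(0,-u_q\cos\theta)$ that move with $q$, so the nesting argument breaks down, and it is unclear that your outlined computation would close---or that the stated conclusion even holds in that generality.
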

\begin{proof}
To prove item (1), note that the unique orthogeodesic from $L$ to $\alpha$, say $\delta$, is contained in  $M$.
Let $p_1, p_2 \in M$ be the end points of $\delta$, i.e, $L \cap \delta =\{ p_1 \}$ and $\alpha \cap \delta = \{ p_2 \}$.
Then note that $L = \Pi^{-1}_M(p_1)$ and $B = \{ \Pi^{-1}_M(q) : q \in \alpha \}$  where $\Pi_M : \hyp \rightarrow M$ is  the orthogonal projection to $M$.
Without  loss of generality, we may assume that $M=\{(z,0)\in \hyp : |z| < 1 \}$, $p_1=(x,0)$  for $x = \tanh\frac{d}{2}$, and $p_2=(0,0)$  (Figure \ref{fig:bisector-proj}).
Under our normalization, $\alpha = \{ (yi, 0) \in \hyp : |y|< 1\}$.
Since for any $q \in \alpha$, the following inequality $$\rho(L, \Pi^{-1}_M(q)) =\rho(p_1, q) \geq \rho(p_1, p_2) = d$$
implies that the projection $\Pi_{L}(\Pi^{-1}_M(q))$ is a disc centered at $p_1$ of radius  smaller than or equal to $\Pi_{L}(\Pi^{-1}_M(p_2))$.
Thus, $\Pi_{L}(B) = \Pi_{L}(\Pi^{-1}_M(p_2))$.
By Lemma \ref{prop:s}, the projection $\Pi_{L}(\Pi^{-1}_M(p_2)$ is a disc centered at $p_1$ of radius $s(d)$ in $L$.
Therefore $\Pi_{L}(B)$ is a disc of radius $s(d)$.

Item (2) follows from the fact that $\Pi_{L}(B) = \Pi_{L}(\Pi^{-1}_M(p_2))$ and Lemma \ref{lem:general-no-hol}.

Finally,   Item (3) follows from the fact that  the boundary of the bisector   $B$ is a smooth $2$-sphere in
$\partial \mathbb{H}_{\mathbb{C}}^2$, and therefore by the generalized
Jordan curve theorem for smooth embedded spheres \cite{Lima},
$\partial B$ divides $\partial \mathbb{H}_{\mathbb{C}}^2$ into two
open connected sets.
\end{proof}

\begin{thm}\label{thm: combination theorem}
Let $(\Gamma_{1}, L_{1})$ and $(\Gamma_{2}, L_{2})$ be
$\mathbb{C}$-Fuchsian groups where $L_1$ and $L_2$ are disjoint, and
let $p_1 \in L_1$ and $p_2 \in L_2$ be the endpoints of the unique orthogeodesic $\delta$ from $L_1$ to $L_2$.
Suppose
\begin{equation}\label{eq: injectivity inequality}
s({\rm inj}(p_1))+s({\rm inj}(p_2)) < \rho (L_1,L_2).
\end{equation}
Then  $\Gamma =<\Gamma_1,\Gamma_2>$  is a discrete subgroup of
${\rm PU}(2,1)$ where
\begin{enumerate}
\item  $\Gamma$  is abstractly  the free
product $\Gamma_1 \ast \Gamma_2$
\item $L_{1}/\Gamma_{1}$ and $L_{2}/\Gamma_{2}$ are embedded complex geodesic surfaces in $X=\mathbb{H}_{\mathbb{C}}^2 /\Gamma$
\item  $\rho_{X} (S_{1},S_{2})=\rho (L_1,L_2).$
\end{enumerate}
\end{thm}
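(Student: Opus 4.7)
My plan is to prove Theorem~\ref{thm: combination theorem} by a Klein--Maskit ping-pong argument with two bisectors placed transverse to the orthogeodesic $\delta$, chosen so that their projections onto $L_i$ are exactly embedded discs of radius $\mathrm{inj}(p_i)$ in $L_i/\Gamma_i$. Writing $r_i = s(\mathrm{inj}(p_i))$ and $d=\rho(L_1,L_2)$, the hypothesis $r_1+r_2<d$ is precisely what permits the two bisectors (placed at respective distances $r_1,r_2$ from $L_1,L_2$ along $\delta$) to sit disjointly between $L_1$ and $L_2$.

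\textbf{Construction.} Let $M$ be the unique complex line containing $\delta$; it meets each $L_i$ transversally at $p_i$. Pick $q_i\in\delta$ with $\rho(p_i,q_i)=r_i$, so along $\delta$ the order is $p_1,q_1,q_2,p_2$. Let $\alpha_i\subset M$ be perpendicular to $\delta$ at $q_i$, and set $B_i=\Pi_M^{-1}(\alpha_i)$, the bisector with spine $\alpha_i$ and complex spine $M$. Since $\alpha_1\cap\alpha_2=\emptyset$ in $M$, so are $B_1$ and $B_2$; by Lemma~\ref{lem: bisector properties}(3) each separates $\hyp$ into two open components. Let $H_i^L$ denote the component of $\hyp\setminus B_i$ containing $L_i$ and $H_i^{\mathrm{opp}}$ the other. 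Using $\Pi_M(L_i)=\{p_i\}$ and the ordering of the $q_i$, one obtains the nested inclusion $H_i^L\subset H_j^{\mathrm{opp}}$ for $i\neq j$, and in particular $H_1^L\cap H_2^L=\emptyset$. From $\rho(\alpha_i,L_i)=r_i$ and Lemma~\ref{lem: bisector properties}(1), the orthogonal projection $\Pi_{L_i}(B_i)$ is a disc $D_i\subset L_i$ of radius $s(r_i)=\mathrm{inj}(p_i)$ centered at $p_i$.

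\textbf{Ping-pong and consequences.} By the definition of the injectivity radius, $\gamma(D_i)\cap D_i=\emptyset$ for every non-trivial $\gamma\in\Gamma_i$, and Lemma~\ref{lem: bisector properties}(2) then yields $\gamma(B_i)\cap B_i=\emptyset$. Using Proposition~\ref{prop:commute} (which gives $\Pi_{L_i}\circ\gamma=\gamma\circ\Pi_{L_i}$) together with $\gamma(L_i)=L_i$, one checks that $\gamma(B_i)\subset H_i^L$, and hence $\gamma(H_i^{\mathrm{opp}})\subset H_i^L$. Since $H_j^L\subset H_i^{\mathrm{opp}}$ for $j\neq i$ and $H_1^L\cap H_2^L=\emptyset$, the pair $(H_1^L,H_2^L)$ is a classical ping-pong configuration. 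The Klein--Maskit lemma then gives $\Gamma=\langle\Gamma_1,\Gamma_2\rangle\cong\Gamma_1*\Gamma_2$ together with the discreteness of $\Gamma$. The embedding of each $S_i=L_i/\Gamma_i$ in $X$ is verified by showing $\gamma L_i\cap L_i=\emptyset$ for $\gamma\in\Gamma\setminus\Gamma_i$: a reduced-word form for $\gamma$ places $\gamma L_i$ inside some $H_j^L$ disjoint from $L_i$. For the distance statement, $\rho_X(S_1,S_2)=\inf_\gamma\rho(L_1,\gamma L_2)=d$: when $\gamma\in\Gamma_1\cup\Gamma_2$ the distance is exactly $d$ by isometry, and for every non-trivial representative $\gamma\in\Gamma_1\backslash\Gamma/\Gamma_2$ the ping-pong inclusions place $\gamma L_2$ on the far side of $B_2$, so the geodesic from $L_1$ to $\gamma L_2$ crosses both $B_1$ and $B_2$ and has length at least $\rho(L_1,B_1)+\rho(B_1,B_2)+\rho(B_2,\gamma L_2)\geq r_1+(d-r_1-r_2)+r_2=d$.

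\textbf{Main obstacle.} The hardest step is verifying that $\gamma(B_i)\subset H_i^L$ (rather than $H_i^{\mathrm{opp}}$) for non-trivial $\gamma\in\Gamma_i$. Bisectors in $\hyp$ are not totally geodesic, so the reflection argument available in the real hyperbolic analogue of \cite{B-tubular} is unavailable; the correct side must instead be pinned down using connectedness of $\gamma(B_i)$, the commutation of Proposition~\ref{prop:commute}, and the fact that $L_i$ must lie on a consistent side of both $B_i$ and $\gamma(B_i)$. A related subtlety is the lower bound $\rho(B_2,\gamma L_2)\geq r_2$ needed in the distance step, which requires induction on the reduced word length of $\gamma$ to control how deeply inside $H_2^L$ the translate $\gamma L_2$ sits.
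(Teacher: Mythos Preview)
Your approach is essentially the paper's own: place bisectors along the orthogeodesic at distances $s(\mathrm{inj}(p_i))$ from $L_i$, use Lemma~\ref{lem: bisector properties} together with the involution property of $s$ to see that their $\Pi_{L_i}$-projections are discs of radius $\mathrm{inj}(p_i)$ moved off themselves by $\Gamma_i$, and run ping-pong. The only notable difference is that the paper inserts a third bisector $B_q$ strictly between your $B_1$ and $B_2$ (using the strict inequality in the hypothesis) and runs the ping-pong on $\partial\hyp$ rather than in $\hyp$; the extra bisector gives a buffer that makes item~(3) slightly cleaner---one tracks the single separator $\gamma B_q$ and proves $\rho(L_k,\gamma B_q)\ge\rho(L_k,B_q)$ by induction on word length, avoiding your flagged subtlety about $\rho(B_2,\gamma L_2)\ge r_2$. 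The ``which side'' obstacle you identify is equally present in the paper's argument and is resolved exactly as you suggest, via connectedness and the fact that $\Pi_{L_i}(H_i^{\mathrm{opp}})\subset D_i$ while $\gamma D_i\cap D_i=\emptyset$.
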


\begin{figure}[htb]
 \begin{center}
     \includegraphics[width=8cm]{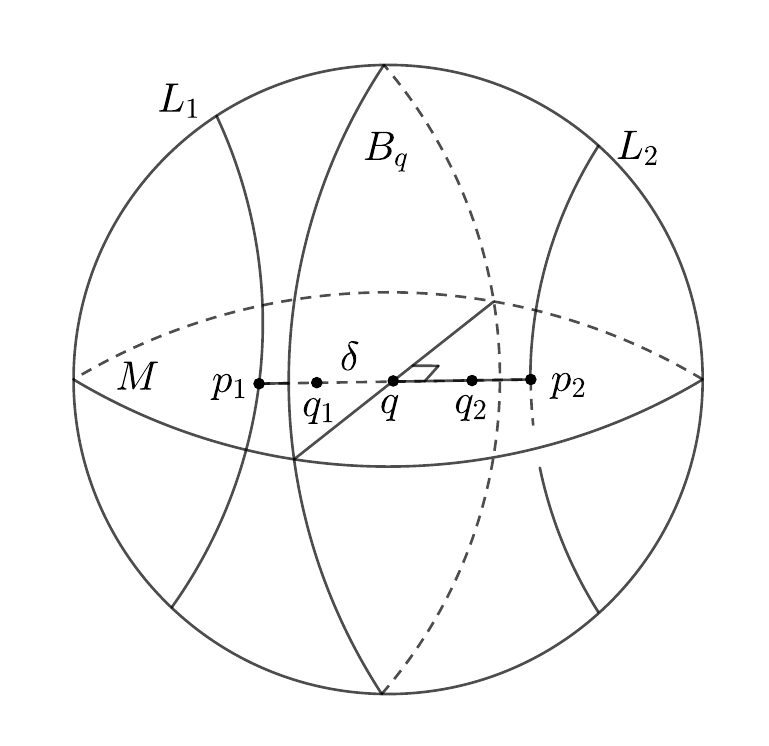}
    \caption{Bisector $B_q$}
    \label{fig:bisector}
  \end{center}
\end{figure}

\begin{proof}
The orthogeodesic $\delta$ from $p_1$ to $p_2$  defines a one parameter
family of disjoint bisectors, $\{B_{p} : p \in \delta\}$ in the following way.
The orthogeodesic $\delta$ is contained in the unique complex line $M$ containing $p_1$ and $p_2$.
For $p \in \delta \subset \mathbb{H}_{\mathbb{C}}^2$
 there is a unique geodesic in $M$
perpendicular to $\delta$ at $p$. Denote this geodesic by
$\alpha$. Then the bisector at $p$ is
\begin{equation*}
B_{p} =\{\Pi^{-1}_{M}(q): \, q \in \alpha \}.
\end{equation*}
Note that $B_{p}$ is foliated by complex lines.

Our goal is to  apply the combination theorem with respect to the action
of $\Gamma_1$ and $\Gamma_2$ on $\partial \mathbb{H}_{\mathbb{C}}^2$.
Let $q_k \in \delta$ be the point a distance $s({\rm inj}(p_k))$ from $L_k$, for $k=1,2.$
Since $s({\rm inj}(p_1))+s({\rm inj}(p_2))$ is strictly less than
$\rho (L_1,L_2)$, there is an interval worth of points $q$ for which
\begin{equation}\label{eq: injectivity and projection}
\rho (L_1,q) >s({\rm inj}(p_1)) \text{ and }  \rho (L_2,q) >s({\rm inj}(p_2)).
\end{equation}
Pick one such point $q$ and consider the bisector $B_{q}$ (See Figure \ref{fig:bisector}).
By Lemma \ref{lem: bisector properties}, the boundary of a  bisector  divides $\partial \mathbb{H}_{\mathbb{C}}^2$ into two open balls. Let $U_1$ be the open ball in the complement of $\partial B_{q_1}$ that contains $\partial L_1$.
Similarly, define $U_2$ to be the open ball in the complement of $\partial B_{q_2}$ that contains $\partial L_2$. Let $V_k$ be the open ball in the complement of ${\partial}B_q$ that contains $\partial L_k$, for $k=1,2$.
Now, by construction, orthogonal projection of $B_{q_1}$ to $L_1$ is the  disc centered at $p_1$ of radius $s({\rm inj}(p_1))$. Since non-trivial elements of $\Gamma_1$ must move  the interior of this disc away from itself, we can conclude  by Lemma \ref{lem: bisector properties},  that any non-trivial $\Gamma_1$-translate of $B_{q_1}$ is disjoint from $B_{q_1}$.  This in turn implies that
 $\gamma (V_2) \subset U_1$, for any non-trivial $\gamma \in \Gamma_1$ (see Figure \ref{fig:combination}). Similarly,   $\gamma (V_1) \subset U_2$, for any non-trivial $\gamma \in \Gamma_2$.
Thus the interior of  $U_1^{c} \cap U_2^{c}$  is precisely invariant under the action of the identity in $\Gamma$. Hence, by the free product version of the  combination theorem (the ping-pong lemma), $\Gamma$ is discrete and the abstract free product of $\Gamma_1$ and $\Gamma_2$.

\begin{figure}[htb]
 \begin{center}
     \includegraphics[width=12cm]{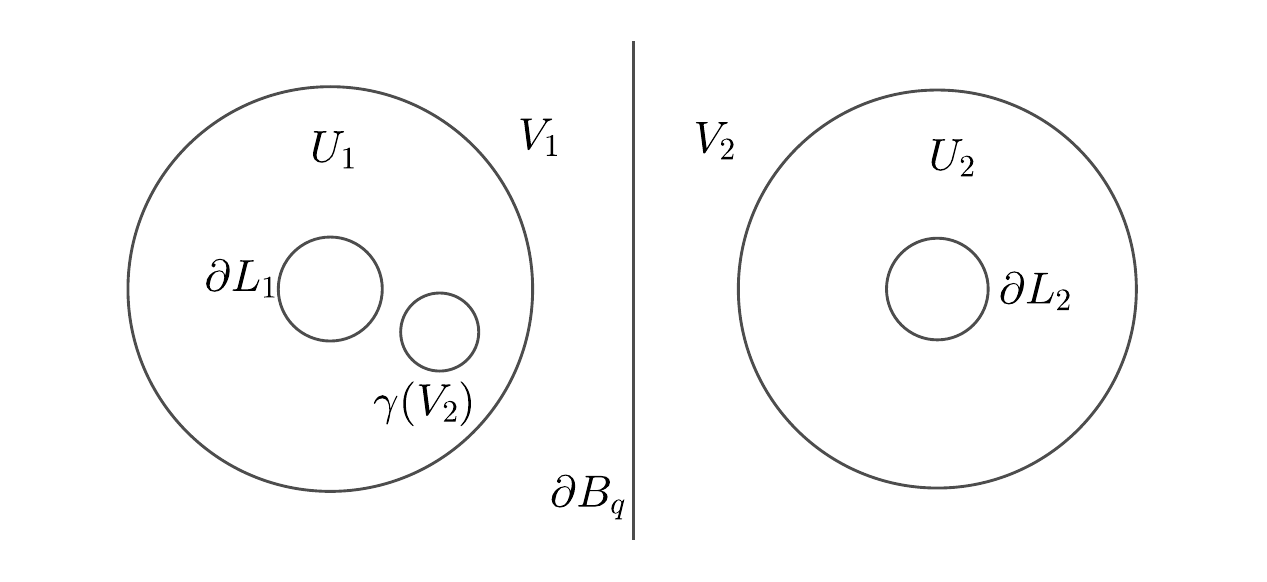}
    \caption{$\gamma (V_2) \subset U_1$}
    \label{fig:combination}
  \end{center}
\end{figure}

Item (2) is equivalent to $\partial L_k$ being precisely invariant under $\Gamma_k$ in $\Gamma$.
Noting that $\partial L_1$ is kept invariant by $\Gamma_1$, and a non-trivial element  of $\Gamma_2$ moves $\partial L_1$ into $U_2$, an induction argument on word length in $\Gamma$ verifies item (2).

To prove  Item (3), note that it is enough to show
\begin{equation}\label{eq: distance realized}
\rho (L_1,L_2) \leq \rho (L_1, \gamma L_2), \text{ for all }\gamma \in \Gamma.
\end{equation}
We first observe that for any non-trivial $\gamma \in \Gamma$,
$\gamma B_{q}$ separates $L_1$ from $\gamma L_2$, and
thus
$$
\rho(L_1,\gamma L_2) \geq \rho(L_1, \gamma B_{q})
+\rho(\gamma B_{q}, \gamma L_2) =\rho(L_1, \gamma B_{q})
+\rho(B_{q}, L_2).
$$
Now, since
$$
\rho(L_1, L_2)=\rho(L_1, B_{q})+ \rho(B_{q}, L_2),
$$
it is enough to show that
\begin{equation}\label{eq: distance to bisector}
\rho(L_{k}, \gamma B_{q})\geq \rho(L_k, B_{q}), \text{ for } k=1,2.
\end{equation}

We  prove (\ref{eq: distance to bisector}) for $k=1$.  Note that an element of $\Gamma_1$ takes
$B_{q}$ into  the half-space bounded by $B_{q_1}$ and keeps the distance to $L_1$ the same.  A non-trivial element of $\Gamma_2$, moves $B_q$ into the half-space bounded by $B_{q_2}$ which
contains $L_2$, hence increasing the distance to $L_1$. Lastly,
 induction on the word length of the element $\gamma$ in
(\ref{eq: distance to bisector}) finishes the argument for $k=1$.
The argument for $k=2$  works the same way.
This finishes the proof of Item (3).
\end{proof}

\subsection{Tube function bounds}
In this subsection we put to use the geometric combination theorem.
Let
\begin{equation*}
d_{\chi}=\sup \{\epsilon : \text{if } S \hookrightarrow X
\text{~and~ $\chi(S)=\chi$, then $S$ has a tube of width } \epsilon   \}
\end{equation*}
and recall the function
\begin{equation*}
s(x) = 2{\rm arcsinh}\left( \frac{1}{{\rm sinh}\frac{x}{2}} \right).
\end{equation*}

\begin{cor} \label{cor: optimal tube function}
The optimal tube  function satisfies
\begin{equation} \label{eq: inequality}
\frac{1}{4}\log\left( \frac{1}{\pi |\chi |}+1 \right)\leq d_{\chi} \leq
s\left({\rm arccosh}\left(\cot \frac{\pi/2}{|\chi | +2}\right)\right).
\end{equation}
In particular,
\begin{equation}\label{eq: asymptotics}
\frac{1}{4{\pi}|\chi |} \lesssim d_{\chi} \lesssim
\frac{2\sqrt{\pi}}{\sqrt{|\chi |}}, \text{ as } |\chi| \rightarrow \infty.
\end{equation}
\end{cor}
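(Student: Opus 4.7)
The lower bound is immediate from Theorem A once one notes the area formula: a complex geodesic surface has constant Gaussian curvature $-1$, so by Gauss--Bonnet $\mathrm{Area}(S) = 2\pi |\chi|$. Substituting $A = 2\pi|\chi|$ into the tube-width function $c(A) = \tfrac14\log(2/A + 1)$ from Theorem~\ref{thm:tubular} yields the left-hand inequality $d_\chi \geq \tfrac14\log(1/(\pi|\chi|) + 1)$, and the asymptotic $d_\chi \gtrsim 1/(4\pi|\chi|)$ follows from $\log(1+x) \sim x$ as $x \to 0$.

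For the upper bound, my plan is to exhibit an explicit example $S \hookrightarrow X$ with small tube width via the combination theorem (Theorem~\ref{thm: combination theorem}). Writing $2g = |\chi|+2$, realize a closed genus-$g$ hyperbolic surface using a regular $4g$-gon fundamental domain with interior angles $\pi/(2g) = \pi/(|\chi|+2)$ summing to $2\pi$ at the single vertex class; the hyperbolic right-triangle relation $\cos(\alpha/2) = \cosh(r)\sin(\pi/n)$ applied to the apothem--half-side--circumradius triangle shows the apothem (which equals $\mathrm{inj}(p_0)$ at the polygon center $p_0$) is $I_{\max} = \mathrm{arccosh}(\cot(\pi/(4g))) = \mathrm{arccosh}(\cot(\pi/(2(|\chi|+2))))$. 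Embed the uniformizing Fuchsian representation into $\mathrm{PU}(2,1)$ as a $\mathbb{C}$-Fuchsian group $(\Gamma_1, L_1)$ so that $p_0$ corresponds to $p_1 \in L_1$, and take an isometric $\mathbb{C}$-Fuchsian copy $(\Gamma_2, L_2)$ whose foot $p_2 \in L_2$ of the orthogeodesic from $L_1$ also corresponds to $p_0$, with $\rho(L_1,L_2) = 2s(I_{\max}) + \epsilon$ for arbitrarily small $\epsilon > 0$. Then $s(\mathrm{inj}(p_1)) + s(\mathrm{inj}(p_2)) = 2s(I_{\max}) < \rho(L_1,L_2)$, so Theorem~D yields a discrete free product $\Gamma = \Gamma_1 \ast \Gamma_2$ with $S_1, S_2$ embedded disjointly in $X = \hyp/\Gamma$ at distance $\rho_X(S_1,S_2) = 2s(I_{\max}) + \epsilon$.

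The key step is showing that the tube width of $S_1$ in $X$ is bounded by $\rho_X(S_1,S_2)/2$; passing $\epsilon \to 0$ then gives $d_\chi \leq s(I_{\max})$, completing the right-hand inequality of (\ref{eq: inequality}). The asymptotic $d_\chi \lesssim 2\sqrt{\pi}/\sqrt{|\chi|}$ follows by expanding $\cot(\pi/(2(|\chi|+2))) \sim 2|\chi|/\pi$, hence $I_{\max} \sim \log(4|\chi|/\pi)$ and $\sinh(I_{\max}/2) \sim \sqrt{|\chi|/\pi}$, so that $s(I_{\max}) = 2\,\mathrm{arcsinh}(1/\sinh(I_{\max}/2)) \sim 2\sqrt{\pi/|\chi|}$. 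I anticipate the main obstacle to be the rigorous justification of the tube-width bound: in the free product $\Gamma_1 \ast \Gamma_2$, \emph{a priori} the closest non-trivial orbit-translate $\gamma L_1$ could lie substantially further from $L_1$ than $\rho(L_1, L_2)$, so the argument must exhibit a specific $\gamma \in \Gamma$ with $\rho(L_1, \gamma L_1) \leq 2\rho(L_1,L_2) + o(1)$---equivalently a short orthogeodesic from $S_1$ back to itself in $X$ passing near $S_2$---which requires exploiting the symmetry of the construction and the placement of $p_1, p_2$ at injectivity maxima of $S_1$ and $S_2$.
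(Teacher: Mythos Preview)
Your approach matches the paper's exactly: Theorem~A for the lower bound, and for the upper bound the combination theorem applied to two $\mathbb{C}$-Fuchsian copies of the regular-$4g$-gon surface (with $g=\tfrac{|\chi|}{2}+1$) placed at distance $2s(I_{\max})+\epsilon$. On the point you flag as the ``main obstacle,'' the paper is in fact terser than you are: it simply asserts $d_\chi \leq \tfrac12\rho_X(S_1,S_2)$ without further comment.

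Note, however, that your own proposed resolution is internally inconsistent. You claim tube width $\leq \rho_X(S_1,S_2)/2$, but then propose to exhibit $\gamma$ with $\rho(L_1,\gamma L_1)\leq 2\rho(L_1,L_2)$---and that only gives tube width $\leq \rho_X(S_1,S_2)$, hence $d_\chi\leq 2s(I_{\max})$ rather than $s(I_{\max})$. The easy bound $\rho(L_1,\gamma_2 L_1)\leq \rho(L_1,L_2)+\rho(\gamma_2 L_2,\gamma_2 L_1)=2\rho(L_1,L_2)$ for nontrivial $\gamma_2\in\Gamma_2$ delivers this weaker estimate immediately; the tube width of $S_1$ is governed by self-returns $\gamma L_1$ with $\gamma\notin\Gamma_1$, not by the distance to $S_2$, so the sharper factor is not automatic. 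This factor of~$2$ is invisible in the asymptotic~(\ref{eq: asymptotics}) since constants are absorbed by~$\lesssim$, so your argument as outlined does establish the asymptotic bounds and agrees with the paper up to this unaddressed constant in the exact inequality.
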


Here  the notation $f(x) \lesssim g(x)$ means there exists a constant $C >0$  so that $f(x) \leq C g(x)$, for $x$ large.

\begin{proof} The left-hand inequality follows from Theorem $\text{A}$.
The rest of the proof concerns the right-hand  inequality.
For genus $g \geq 2$, there exists a closed hyperbolic surface $S_g$ of genus $g$  containing an embedded disc of radius
${\rm arccosh}(\cot \frac{\pi}{4g})$ (see e.g. \cite{B-tubular}).
Fix small $\epsilon >0$.
Let $\Gamma_1$ and $\Gamma_2$ be two $\mathbb{C}$-Fuchsian groups whose quotients are isometric to $S_g$, and whose invariant complex lines  are $L_1$ and $L_2$, respectively.
Next we move the configuration so that
\begin{itemize}
\item the orthogeodesic between $L_1$ and $L_2$ has endpoints at
$p_1$ and $p_2$ where,
$$
{\rm inj}(p_1)={\rm inj}(p_2)={\rm arccosh}\left(\cot \frac{\pi}{4g}\right)
$$
\item the distance
$$
\rho(L_1,L_2) =s({\rm inj}(p_1))+s({\rm inj}(p_2)) +\epsilon
$$
\end{itemize}

The groups $\Gamma_1$ and $\Gamma_2$ satisfy the hypotheses of Theorem \ref{thm: combination theorem}.  Hence,
setting $\Gamma=<\Gamma_1,\Gamma_2>$,
 we have constructed a
complex hyperbolic $2$-manifold $X=\mathbb{H}^{2}_{\mathbb C}/\Gamma$ with two embedded complex
geodesic surfaces of Euler characteristic $\chi = \chi(S_g)$ with a  distance
\begin{equation*}
s({\rm inj}(p_1))+s({\rm inj}(p_2)) +\epsilon
\end{equation*}
from each other. Now the optimal tube function must satisfy
\begin{displaymath}
d_{\chi} \leq \frac{1}{2}\left[s({\rm inj}(p_1))+s({\rm inj}(p_2)) +\epsilon
\right].
\end{displaymath}
for all $\epsilon >0$.
Letting $\epsilon$ go to zero, we conclude that
\begin{equation*}
d_{\chi} \leq \frac{1}{2}\left[ s({\rm inj}(p_1))+s({\rm inj}(p_2))\right]=
s\left({\rm arccosh}\left(\cot \frac{\pi}{4g}\right)\right).
\end{equation*}
Setting $g= \frac{|\chi |}{2}+1$ yields the right-hand inequality of
(\ref{eq: inequality}).
Expression  (\ref{eq: asymptotics}) is straightforward and left to the reader.
\end{proof}

\begin{rem}
In (\ref{eq: asymptotics}) we have shown upper and lower bounds for the asymptotic growth rate of  the optimal tube function.
These  inequalities    can be used to get lower and upper bounds on the volume of a  tubular neighborhood of a complex geodesic surface of width of the tube  function. In particular,  the asymptotic volume of the tubular neighborhood goes to zero when the width of the tube  used  is the left-hand expression in (\ref{eq: asymptotics}).  On the other hand, using the right-hand expression yields a volume that is bounded from above and  does not go to zero.  Hence we finish with the  question:
\vskip10pt
{\bf Question:} What is the  asymptotic  growth rate of the optimal  tube function, and what is the asymptotic growth rate of the volume of a tubular neighborhood with this optimal tube width?

\end{rem}


\end{document}